\numberwithin{equation}{section}
\newtheorem{theorem}{Theorem}[section]
\newtheorem{proposition}[theorem]{Proposition}
\newtheorem{lemma}[theorem]{Lemma}
\newtheorem{corollary}[theorem]{Corollary}
\newtheorem{example}{Example}[section]
\newtheorem{assumption}{Assumption}[section]
\newtheorem{remark}{Remark}[section]}
\newenvironment{proof}{
  \noindent\textbf{Proof}\ }{\hspace*{\fill}
  \begin{math}\Box\end{math}\medskip}
\newenvironment{proof*}[1]{
  \noindent\textbf{#1\ }}{\hspace*{\fill}
  \begin{math}\Box\end{math}\medskip}
\begin{document}

\def\ssm{\smallsetminus}

\def\1{{\mathbf 1}}
\def\B{{\mathcal B}}
\newcommand{\dd}{\,\text{\rm d}}
\def\R{{\mathbb R}}
\def\F{{\mathcal F}}
\def\G{{\mathcal G}}
\def\C{{{\mathbf C}}}
\def\Q{{{\mathbf Q}}}
\def\N{{{\mathbf N}}}
\def\L{{\mathcal L}}
\def\eqref{{(\ref)}}
\def\I{{\mathcal I}}
\def\l{\ell}
\newcommand{\lra}{\longrightarrow}
\newcommand{\ua}{\uparrow}
\newcommand{\da}{\downarrow}
\newcommand{\dsp}{\displaystyle}
 \newcommand{\Span}{{\mathrm span}}
\def\W{{\mathbf W}}
\def\E{{{\mathbb{E}}}}
\def\p{{{\mathbb P}}}
\def\Z{{{\mathbf Z}}}
\def\loc{\text{\rm{loc}}}
\def \lloc{{L_{loc}}}
\def\det{\mathop{\rm det}}
\def\tr{{\mathop{\rm tr}}}
 \def\bb{\beta}
\def\aa{{\alpha}}
 \def\D{\mathcal D}
\def\e{{\rm{e}}}
\def\ua{\underline a}
\def\OO{\Omega}
 \def\b{\mathbf b}
\def\ee{\varepsilon}
\def\paral{/\kern-0.55ex/}
\def\parals_#1{/\kern-0.55ex/_{\!#1}}
\def\Ric{\mathop{\rm Ric}}
\def\le{{\,\leqslant\,}}
\def\ge{{\geqslant}}
\def\<{\langle}
\def\>{\rangle}
\def\epsilon{{\varepsilon}}

\title{{Strong completeness for a class of stochastic differential equations with irregular coefficients}
\footnote{supported by an EPSRC grant (EP/E058124/1) and
Portuguese Science Foundation
(FCT) for the project ``Probabilistic approach to finite and infinite dimensional
dynamical systems'' (No.\ PTDC/MAT/104173/2008). }}

\author{ {Xin Chen$^{a)}$ and Xue-Mei Li$^{b)}$ }\\
\footnotesize{$^{a)}$Grupo de Fisica Matematica, Universidade de
Lisboa, Av Prof Gama Pinto 2,}\\
\footnotesize{Lisbon 1649-003, Portugal, chenxin\_217@hotmail.com,}\\
\footnotesize{$^{b)}$ Mathematics Institute, The University of Warwick,
Coventry CV4 7AL, U.K.,}\\
\footnotesize{
xue-mei.li@warwick.ac.uk.}}


\date{}
\maketitle

\abstract{We prove the strong completeness for a class of non-degenerate SDEs,
whose coefficients are not necessarily uniformly elliptic nor locally Lipschitz continuous nor bounded.
Moreover,
for each $p>0$ there is a positive number $T(p)$ such that for all $t<T(p)$,
the solution flow $F_t(\cdot)$ belongs to the Sobolev space $W_{\loc}^{1,p}$.  The main tool for this is the
approximation of the associated derivative flow
equations. As an application a differential formula  is  also obtained. }

\section{Introduction}
Throughout the paper $(\Omega,\F, \p)$ is a probability space with complete and right continuous filtration $(\F_t)$, and  $W_t=\{W^1_t, ..., W^m_t\}$ is an $m$-dimensional Brownian motion.
Let $X:\R^m \times \R^d \to \R^d$ be a Borel measurable map such that  for each $x\in \R^d$ the map $X(x, \cdot): \R^m \to \R^d$ is linear and let $X_0: \R^d \to \R^d$ be a Borel measurable vector field on $\R^d$. We study the following SDE,
\begin{equation}\label{sde0}
dx_t=X(x_t)\,dW_t+ X_0(x_t)\,dt.
\end{equation}
Let $X^*(x)$ denote the transpose of $X(x):\R^m \to \R^d$.
We say that the diffusion coefficient  $X$ or the SDE (\ref{sde0}) is  uniformly elliptic if there exists a $\delta>0$ such that  $|(X^*X)(x)(\xi)|\ge \delta |\xi|$ for every $x, \xi \in \R^d$. It is elliptic if $X(x)$ is a surjection for each $x$.

 Fixing an orthonormal basis  $\{e_1,..., e_m\}$ of $\R^m$, for $1 \le k \le m$ and $x\in \R^d$
 we define $X_k(x)=X(x)(e_k)$. Then $\{X_0, X_1, \dots, X_m\}$ is a family of  Borel measurable vector fields on $\R^d$ and the SDE (\ref{sde0}) has the following expression,
\begin{equation}\label{sde}
dx_t=\sum_{k=1}^m   X_k(x_t)\,dW^k_t+ X_0(x_t)\,dt.
\end{equation}
Throughout the paper we assume that there is a unique strong solution to (\ref{sde}) and we denote by  $(F_t(x,\omega), 0\le  t
< \zeta(x,\omega))$
the strong solution with a (non-random) initial value  $x\in \R^d$ and
explosion time $\zeta(x,\omega)>0$. The differential of $X_k$ at $x$ is denoted by $(DX_k)_x$ or $DX_k(x)$.

The SDE (\ref{sde}),  or its solution, is said to be complete if the unique strong solution does not explode, i.e.
$\zeta(x)=\infty$, $\p$-a.s. for every $x \in \R^d$. The  SDE (\ref{sde}), or its solution, is said to be strongly complete if it is complete and
there is a $\p$-null set $\Omega_0$ such that for every   $\omega \notin \Omega_0$,  the function $(t, x)\mapsto
F_t(x,\omega)$ is jointly continuous on $[0,\infty)\times \R^d$.
For further discussion on this, see the books: K. D. Elworthy \cite{Elworthy} and
H. Kunita \cite{Kunita-book}.

If the SDE is strongly complete, the corresponding stochastic dynamics has the perfect cocycle
property, which is often the basic assumption in the study of stochastic dynamical systems.
Continuous dependence on the initial data is also an essential assumption for successful
numerical simulation of the solutions.
It turns out that smoothness and boundedness of the coefficients are not sufficient for the
strong completeness.
In  X.-M. Li and M. Scheutzow \cite{Li-Scheutzow}, a SDE on $\R^2$ of the  form $d x_t=\sigma(x_t, y_t) dB_t$, $dy_t=0$
(here both $x_t$ and $y_t$ are scalar valued process) is constructed with the
property  that
 although $\sigma:\R^2\to \R$ is bounded and  $C^\infty$ smooth, the SDE is not strongly complete.
See also M. Hairer, M. Hutzenthaler and A. Jentzen \cite{Hairer-Hutzenthaler-Jentzen} on the Loss of regularity for Kolmogorov equations.

It is well known, proved by J. N. Blagovescenskii and M. I. Freidlin
\cite{Blagovescenskii-Freidlin61}, that  the SDE (\ref{sde}) is strongly complete  if its coefficients are (globally) Lipschitz continuous.
Suppose that $\{X_k\}_{k=0}^m$ are $C^2$ and $\{DX_k\}_{k=0}^{m}$ are not necessarily bounded,
a sufficient condition for the strong completeness of (\ref{sde}) is given in X.-M. Li \cite{Li-flow}.
In particular, the core condition in \cite{Li-flow} is on the mild growth rate of $\{|DX_k|\}_{k=0}^{m}$,
and the crucial estimate is on the integrability of the norm of
the solution to the derivative flow equation which is controlled by  the growth rate at infinity of the vector fields $\{X_k,$ $ DX_k\}_{k=0}^m$. We would remark that the SDEs studied in \cite{Li-flow} are  on Riemannian manifolds;   specific computations for SDEs on $\R^d$ are given in
\cite[Section 6]{Li-flow}. See also  S. Z. Fang, P. Imkeller and T. S. Zhang \cite{Fang-Imkeller-Zhang} and  X. C. Zhang \cite{Zhang-10}
for different methods to obtain such sufficient conditions.

As mentioned above, a control on the derivatives of the coefficients is useful in estimating the moments of
the solution to the derivative flow equation. The latter also appears to be useful for the study of the
convergence rates  in numerical schemes, see M. Hairer, M. Hutzenthaler and A. Jentzen \cite{Hairer-Hutzenthaler-Jentzen}, where
they construct some  SDEs with smooth bounded coefficients whose solutions fall into one of the following cases: (1) the map $x\mapsto \E(F_t(x))$ is continuous
but not locally H\"older continuous; (2) for any $t\ge 2$,  $C>0$, $\alpha>0$, and  $h_0>0$, there is
a step size $h\in (0, h_0)$ with the property that the rate of convergence for the Euler-Maruyama method is slower
 than $Ch^\alpha$.

Let us consider the case that the coefficients of SDE (\ref{sde}) are not Lipschitz continuous.
If $X$ is uniformly elliptic, $\{X_k\}_{k=0}^m$ are bounded, and $X_k\in W^{1,2d}_{\loc}(\R^d; \R^d)$ for each $k\ge 1$, it is  established in
A. Veretennikov \cite{Veretennikov80} that there is a  unique strong solution to (\ref{sde}).
Letting $m=d$ and $X(x)$ be the identity matrix, in \cite{Krylov-Rockner},
N. V. Krylov and M. R\"ockner  prove that  there is a unique global strong solution  provided that
 $X_0\in L^q([0,T];L^p(\R^d;\R^d))$
for some $p>1$, $q>2$ satisfying the condition $\frac{d}{p}+\frac{2}{q}<1$. The strongly completeness for such SDE is obtained  by
E. Fedrizzi and F. Flandoli \cite{Fedrizzi-Flandoli}. See also related works by
I. Gy\"ongy and T. Martinez \cite{Gyongy-Martinez} and A. M. Davie \cite{Davie07}.
Similar results hold for the multiplicative noise case:  suppose that $X$ is uniformly elliptic
and uniformly continuous with $|DX_k| \in L^q([0,T];L^p(\R^d)),\ 1\le k \le m$, $|X_0| \in $
$ L^q([0,T];L^p(\R^d))$ for $p,q$ as above,  then (\ref{sde}) is  shown to be strongly complete by
X. C. Zhang \cite{Zhang-05,Zhang-11}.
If $X$ is uniformly elliptic,  $X_0 \in C^{0,\delta}(\R^d;\R^d)$ and $\{X_k\}_{k=1}^m \subseteq$ $ C_b^{3,\delta}(\R^d;\R^d)$
 for some $0<\delta<1$, it is proved by F. Flandoli, M. Gubinelli and E. Priola
\cite{Flandoli-Gubinelli-Priola}
that (\ref{sde}) is strongly complete and the solution flow $F_t(\cdot,\omega)$ is differentiable
with respect to the space variable.
For bounded measurable drifts, see also the Ph.D. thesis of X. Chen \cite{Chen} and a recent paper of
S. E. A. Mohammed, T. Nilsen and F. Proske \cite{MNP} where the the noise is essentially additive and the
solution flow of (\ref{sde}) is shown to belong to a (weighted) Sobolev space,
which generalises the result in N. Bouleau and F. Hirsch \cite{Bouleau-Hirsch} where the coefficients are Lipschitz continuous.
We also refer
 to readers to S. Z. Fang and T. S. Zhang \cite{Fang-Zhang}, S. Z. Fang and D. J. Luo \cite{Fang-Luo}, and
 S. Cox, M. Hutzenthaler and A. Jentzen \cite{Cox-Hutzenthaler-Jentzen}
on the study of strong completeness for a SDE whose coefficients are not (locally) Lipschitz continuous nor elliptic.

In all the results mentioned earlier, concerning with the strong completeness of a SDE whose
coefficients are not restricted to the class of (locally) Lipschitz continuous vector fields,
some uniform  conditions are assumed,
such as the uniform continuity condition, or the $L^{p}$ integrability, or the uniform ellipticity, which are quite
different from the mild growth conditions in \cite{Fang-Imkeller-Zhang}, \cite{Li-flow}, \cite{Zhang-10} for the SDEs with
locally Lispchitz continuous coefficients.
 In this paper we are specially interested in SDEs whose coefficients are not locally Lipschitz continuous nor necessarily
satisfying some uniform conditions.



Some preliminary results in this paper appeared in our earlier work,  \cite{Chen-Li-Arxiv}, we have strengthened the results there
by removing the boundedness condition and the uniform ellipticity condition on the diffusion coefficients.

Throughout this paper the components of the vector fields $X_k$ are denoted by $X_k=( X_{k1}, \dots,  X_{kd})$, $0\le k \le m$.
Let  $X^*X=(a_{i,j})_{i,j=1}^d $ be the $d\times d$ diffusion matrix with entries
$a_{i,j}(x)=\sum_{k=1}^m X_{ki}(x) X_{kj}(x)$.

For $x,\xi \in \R^d$, let
\begin{equation*}
H_{p}(x)\big(\xi,\xi\big):=2p\big\langle D X_0(x)(\xi), \xi\big\rangle
+(2p-1)p\sum_{k=1}^{m}\big|D X_k(x)(\xi)\big|^2
\end{equation*}
and we define the real valued function
\begin{equation}\label{kp}
K_p(x):=\sup_{|\xi|=1}\left\{H_{p}(x)\big(\xi,\xi\big)\right\}.
\end{equation}

\begin{assumption}\label{assumption1}
\begin{enumerate}
\item [(1)]  There exist positive constants $p_1, C_1$, such that,
\begin{equation}\label{c1}
\sum_{i,j=1}^d a_{i,j}(x)\xi_i \xi_j\geqslant \frac{C_1}{1+|x|^{p_1}} |\xi|^2,
\quad  \forall  x \in \R^d,\ \xi=(\xi_1,...,\xi_d)\in \R^d.
\end{equation}
\item[(2)] There exist positive constants $C_2$, $p_2$, such that for
all $0\le k \le m$,
\begin{equation}\label{c2aa}
|X_k(x)|\le C_2(1+|x|^{p_2}).
\end{equation}
There is a constant $0<\delta\le 1$, such that for every $p>0$,
\begin{equation}\label{c2}
\sup_{|y|\le \delta}\Big(\sum_{k=1}^m p|X_k(x+y)|^2+\langle x , X_0(x+y)\rangle \Big) \le C(p)(1+|x|^2)
\end{equation}
for some positive constant $C(p)>0$.
\item[(3)]  There are constants $p_3>2(d+1)$, $p_4>d+1$ such that
$X_k \in W_{\loc}^{1,p_3}(\R^d;\R^d)$, $1\le k \le m$ and $X_0 \in W_{\loc}^{1,p_4}(\R^d;$ $\R^d)$.
For every $p>1$, there exists a constant $\kappa(p)>0$, such that for every $R>0$,
\begin{equation}\label{c3}
\int_{\{|x|\le R\}} e^{\kappa(p) K_{p}(x)} dx<\infty.
\end{equation}
Here $K_p(x)$ is defined by (\ref{kp}).

\item[(4)] There exist positive constants $R_1$, $C_3$, $p_5$, such that
for all $0 \le k \le m $
\begin{equation}\label{c4}
|DX_k(x)|\le C_3(1+|x|^{p_5}),\quad \forall \; |x|>R_1.
\end{equation}
For every $p>0$, there exists a constant $C(p)>0$, such that,
\begin{equation}
K_p(x)\le  C(p)\log (1+|x|^2), \quad  \forall \;|x|>R_1.
\label{c4aa}
\end{equation}

  \end{enumerate}
\end{assumption}

\medskip

The main theorem of the paper is as following:
\begin{theorem}\label{th:regularity}
Under Assumption \ref{assumption1} the SDE (\ref{sde}) is strongly complete.
Furthermore, for every $p>0$  there is a positive constant $T_1(p)$ such that for each $t\in [0, T_1]$,
$F_t(\cdot,\omega) \in W^{1,p}_{\loc}(\R^d; \R^d)$, $\p$-a.s..
\end{theorem}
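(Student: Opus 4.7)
The plan is to build the solution flow by smooth approximation, obtain uniform $L^p$ bounds both on the approximating flows and on their Jacobians, and then pass to the limit to obtain simultaneously non-explosion, spatial Sobolev regularity, and joint continuity.

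First, I would mollify the coefficients, setting $X_k^n=X_k*\rho_n$ for a standard mollifier. Assumptions (2)--(4) transfer to $X_k^n$ with constants essentially unchanged, since the bounds in (\ref{c2aa})--(\ref{c4aa}) are local and the shift $|y|\le\delta$ in (\ref{c2}) absorbs the convolution. Each mollified SDE is classical and has a smooth global flow $F_t^n$. A standard Khasminskii argument applied to $(1+|F_t^n(x)|^2)^p$, using (\ref{c2}) to control the Itô drift, then gives a uniform moment bound
\begin{equation*}
\E\sup_{s\le t}(1+|F_s^n(x)|^2)^p \le C(p,t)(1+|x|^2)^p
\end{equation*}
and in particular non-explosion of $F_t^n$ uniformly in $n$.

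The third and most delicate step is a uniform $L^p$ estimate for the derivative flow $V_t^n(x)=DF_t^n(x)$. Differentiating the mollified SDE gives a linear equation for $V_t^n$, and an Itô computation on $|V_t^n(x)\xi|^{2p}$ produces exactly the quadratic form $H_p$ from the statement, yielding
\begin{equation*}
\E|V_t^n(x)\xi|^{2p}\le |\xi|^{2p}\,\E\exp\!\Big(\int_0^t K_p(F_s^n(x))\,ds\Big)
\end{equation*}
after standard localisation to remove the local-martingale part. To control the right-hand side, I would use the non-degeneracy (\ref{c1}) together with the Sobolev integrability in (3) (the choice $p_3>2(d+1)$, $p_4>d+1$ is precisely what makes a Krylov/De Giorgi-type Gaussian-like upper bound on the transition density $\rho_s^n(x,y)$ of $F_s^n$ available on bounded sets), then combine Jensen's inequality in time with exponential integrability (\ref{c3}):
\begin{equation*}
\E\,e^{\int_0^t K_p(F_s^n(x))\,ds}\le \frac{1}{t}\int_0^t\!\int_{\R^d} e^{t K_p(y)}\rho_s^n(x,y)\,dy\,ds.
\end{equation*}
On $\{|y|\le R\}$ this is controlled by (\ref{c3}) provided $t\le \kappa(p)$, which fixes $T_1(p)$; on $\{|y|>R\}$ the logarithmic bound (\ref{c4aa}) turns $e^{tK_p(y)}$ into a polynomial in $|y|$, absorbed by the polynomial moments from step two together with the growth (\ref{c4}) on $DX_k$ which enters the density bound on large balls.

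The main obstacle is precisely this uniform density/exponential-moment step under spatially decaying ellipticity and unbounded coefficients; everything else is compactness and passage to the limit. Once $\E\sup_{s\le t}|V_s^n(x)|^{2p}$ is bounded locally uniformly in $x$ and $n$ for $t\le T_1(p)$, a Kolmogorov continuity argument in $(t,x)$ combined with Morrey's embedding (taking $p>d$) yields uniform joint H\"older continuity of $F_t^n(\cdot,\omega)$ on compacts. A diagonal/tightness argument then extracts a subsequential limit which, by uniqueness of the strong solution to (\ref{sde}), must equal $F_t$; this transfers the joint continuity (hence strong completeness) and the $W^{1,p}_{\loc}$ bound to the flow of (\ref{sde}), completing the proof.
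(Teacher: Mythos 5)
Your overall strategy (mollify, control the derivative flow through an exponential functional of $K_p$, then Kolmogorov plus a limiting argument) is the same as the paper's, but two of your central steps contain genuine gaps. First, the plain mollification $X_k^n=X_k*\rho_n$ does not obviously preserve the degenerate ellipticity (\ref{c1}) uniformly in $n$: since the $X_k$ are unbounded with derivatives growing polynomially, the convolution error at a point $x$ is of size $\ee\,(1+|x|^{p_5})$, while the ellipticity constant you must protect decays like $(1+|x|^{p_1})^{-1}$, so for any fixed $\ee$ the mollified diffusion matrix can lose invertibility at large $|x|$. You do need (\ref{c1}) for the approximating flows (your density/Krylov step uses it), and this is exactly why the paper does not mollify directly but first truncates radially at radius $\ee^{-\lambda_0}$ and then mollifies, with $\lambda_0$ chosen so that the convolution error is dominated by the surviving ellipticity (Section \ref{sec-construction}, Lemmas \ref{lem6}--\ref{lem7}). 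Second, your key bound on $\E\exp\big(\int_0^t K_p(F_s^n)\,ds\big)$ rests on a ``Krylov/De Giorgi-type Gaussian-like upper bound on the transition density'' of $F^n$, uniform in $n$. Under Assumption \ref{assumption1} the coefficients are neither uniformly elliptic, nor bounded, nor uniformly continuous, and no such density bound (let alone one uniform in $n$) is available; the paper explicitly notes this and replaces it by Krylov's estimate for distributions of continuous semimartingales (Lemma \ref{lem1}), which only requires moment bounds on $\mathop{\rm tr}A$, $|X_0|$ and a lower bound on $\det A$ along the flow and yields the $L^{d+1}$-type bound of Lemma \ref{lem3}; this, combined with Jensen in time, (\ref{c3}) on a ball and (\ref{c4aa}) outside, is what actually produces the uniform estimate on $\E|V_t^\ee|^p$ for $t\le T_0(p)$ (Lemma \ref{lem9}). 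As written, your exponential-moment step is an appeal to an estimate that is not known in this setting, and it is precisely the point where the hypotheses $p_3>2(d+1)$, $p_4>d+1$ enter differently than you suggest (they feed the Krylov estimate and the uniqueness result of Zhang, not a Gaussian bound). A smaller but still substantive omission: also note that your Itô computation produces $K_p^n$ built from $DX_k^n$, not $K_p$ itself, so you need the Jensen-type comparison $K_p^n\le K_p*\rho_n$ and uniform-in-$n$ exponential integrability, as in (\ref{lem7-1b}).

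Your concluding limit passage is also thinner than it needs to be. Tightness plus uniqueness in law identifies only the law of a subsequential limit, whereas strong completeness and the identification $D_xF_t=V_t$ are almost-sure statements about the given strong solution; you need convergence in probability of $F_t^n(x)$ (and of the Jacobians) to the strong solution, locally uniformly in the initial data. This is where the paper invests real work: pathwise uniqueness for the formal derivative system (\ref{sde1}) (Proposition \ref{prop1}), and the Skorokhod/Kaneko--Nakao-type convergence lemma (Lemma \ref{lem5}) leading to Theorem \ref{th1}, which gives $\sup_{t\le \tilde T_0}\big(|F_t^\ee-F_t|+|V_t^\ee-V_t|\big)\to 0$ in $L^1(\p)$ uniformly on compacts. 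Your weak-compactness transfer of the $W^{1,p}_{\loc}$ bound can be made to work once you have a.s. $L^1_{\loc}$ convergence of $F_t^n(\cdot,\omega)$ along a subsequence, but that convergence itself requires the pathwise-uniqueness-based argument you have not supplied; similarly, the H\"older-in-$x$ moment bound must be passed to the limit flow before Kolmogorov's criterion can be applied to $F$ rather than to the $F^n$.
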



We  comment on  Assumption \ref{assumption1}.
Condition (\ref{c2})  is a technical condition that is used for approximating
(\ref{sde1}) by a family of SDEs with smooth coefficients  satisfying
\begin{equation}\label{e0-1}
p\sum_{k=1}^m |X_k(x)|^2+\langle x , X_0(x)\rangle \le C(p)(1+|x|^2).
\end{equation}
A SDE with coefficients satisfying condition (\ref{e0-1})  is complete, see  e.g.  \cite{Li-flow}.
The constant $\kappa(p)$ in (\ref{c3}) is allowed to decrease with $p$.
In conditions (\ref{c2}-\ref{c3}), the restrictions on $X_0$ are only one-sided.
In particular condition (\ref{c3}) does not imply that
$\exp\big(p |DX_k|^2\big)$ is locally integrable. In fact,
if $\sup_{|\xi|=1}\langle DX_0(\xi), \xi\rangle$ is  negative enough, it compensates
the contribution of the norms of the derivatives of the diffusion coefficients to $K_p$, c.f. Example \ref{ex1} below.
The  conditions (1) and (3) of Assumption \ref{assumption1} imply that
there is a unique strong solution to (\ref{sde}). Indeed
 since $X$ is elliptic,  $X_k \in W_{\loc}^{1,p_3}(\R^d;\R^d)$ for $1 \le k \le d$,
and $X_0 \in W_{\loc}^{1,p_4}(\R^d;\R^d)$, we may apply  \cite[Theorem 1.3]{Zhang-11}.
Moreover, under condition (\ref{c2}), the SDE (\ref{sde})  is complete. Roughly speaking, Assumption
\ref{assumption1} means that the coefficients are contained in some Sobolev space and satisfy some
local integrability condition in a compact set, in particular, the coefficients may not be Lipschitz
continuous in this compact set,  while outside such compact set, the mild growth rate for
the derivatives of
the coefficients are needed.

We also comment on the  proof of the theorem.  In
N. V. Krylov and M. R\"ockner \cite{Krylov-Rockner} and X. C. Zhang \cite{Zhang-05, Zhang-11},  a transformation,
first introduced in A. K. Zvonkin \cite{Zvonkin74}, are applied to transform
(\ref{sde}) to  a  SDE  without drift. In order to apply the Zvonkin transformation,  global estimates for the solution to the associated parabolic PDE are required.
Such estimates are usually obtained under the assumption that the diffusion coefficients
are uniformly elliptic and uniformly continuous, see e.g. N. V. Krylov \cite{Krylov-book}.
In Assumption \ref{assumption1}, we do not assume the diffusion coefficients to be uniformly elliptic or to be uniformly continuous,
nor the derivatives satisfy some $L^p$ integrability conditions,
no suitable estimates for the corresponding PDE  is available. We  therefore have to assume
the drift coefficients to be more regular than that in the reference mentioned above.

We adapt the philosophy in \cite{Li-flow} and study the strongly completeness of (\ref{sde})
by investigating the corresponding derivative
flow equation. But the methods here are however quite different due to the irregularity of 
the coefficients. In fact, the derivative 
flow equation is
\begin{equation}\label{sde1}
\begin{cases}
&dx_t=\sum_{k=1}^m X_k(x_t)dW_t^k+X_0(x_t)dt, \\
&dv_t=\sum_{k=1}^m D X_k(x_t)(v_t)dW_t^k+D X_0(x_t)(v_t)dt.
\end{cases}
\end{equation}
Here $v_t$ is a $\R^{d}$-valued process.
 Since the coefficients $\{X_k\}_{k=0}^m$  are not necessarily locally Lipschitz continuous,
 at this stage, the derivative flow equation, whose coefficients
are not necessarily locally bounded, is only a formal expression.
We must establish firstly the  pathwise uniqueness and the existence of a strong solution
to the derivative flow equation (\ref{sde1}).

Let $(F_t(x), V_t(x,v))$ be the strong solution to (\ref{sde1}) with initial point $x_0=x \in \R^d$, $v_0=v \in \R^{d}$.
In case of  $\{X_k\}_{k=0}^m$ belonging to $C_b^2(\R^d;\R^d)$, it is well known that
 $D_x F_t(x)(v)=V_t(x,v)$ $\p-a.s.$, see. e.g. H. Kunita \cite{Kunita-book}.  In this paper, we use
 the approximating Theorem (Theorem \ref{th1}) to establish such a result,  c.f. Theorem \ref{th:regularity}.
Furthermore letting $DX_k$ and $\tilde DX_k$ be  two different version of the weak derivative of
$X_k$, we show that
\begin{equation*}
\int_0^T |DX_k(x_t)-\tilde DX_k(x_t)|^2 dt=0,\ \ \p-a.s..
\end{equation*}
It follows that the It\^o integral $\int_0^T DX_k(x_t) (v_t)dW_t^k$ is independent of the choice of versions of
$DX_k$.

The remaining part of the paper is organized as following. In section \ref{sec-example}, we give an example of a SDE
 which satisfies Assumption \ref{assumption1}. This example is not covered by the reference listed above.
 In Section \ref{sec-convergence} we establish a lemma for the approximation of a strong solution to a SDE with
 pathwise uniqueness property.
 Section \ref{sec-distribution} is devoted to an estimation for the distribution of the solution to (\ref{sde}). 
 A key step in the proof of the main theorem is presented in Section \ref{sec-construction}, where we construct an approximating
 sequence of smooth vector fields $\{X_k^\ee\}_{k=0}^m$, which satisfy
the conditions of Assumption \ref{assumption1} with the corresponding constants independent of $\ee$.
 In Section \ref{sec-derivative-flow} we give uniform estimates on the approximating derivative flow equations.
The key convergence  result is presented in Theorem \ref{th1}.
In section \ref{sec-proof} we complete the proof of the main theorem.
Finally  a differentiation formula is  established in Section \ref{bel-formula}.

\medskip

Notation. The symbol $C$ denotes a constant that may vary in different places and depend only on dimension
$d$ and the constants in Assumption \ref{assumption1}. If it depends on another
parameter, it will be emphasized by an index.


%


\section{An Example }
\label{sec-example}
The example below satisfies Assumption \ref{assumption1}, as far as we know  it is not covered by
results from the existing literature. The vector fields $\{X_k\}_{k=1}^d $ constructed below are
not uniformly elliptic  if $q_2<0$;  while $\{X_k\}_{k=1}^d $
 are not bounded nor uniformly continuous if $q_2>0$.

\begin{example}\label{ex1}
We suppose that $q_1, q_3, q_4$ are positive numbers and $q_2\in \R$.
For a fixed orthonormal basis $\{e_1, \dots, e_d\}$ of $\R^d$ and $ 1\le k \le d$  we define
\begin{align*}
X_k(x)&=\Big((1+|x|^{q_1})g_1(x)+|x|^{q_2}g_2(x)\Big)e_k,\\
X_0(x)&=\Big(-(1+|x|^{-q_3})g_1(x)-|x|^{q_4}g_2(x)\Big)x,
\end{align*}
where  $g_1, g_2 $ are $C^\infty$ functions on $\R^d$ with the following specifications
\begin{equation*}
g_1(x)=\begin{cases}
& 1,\qquad \text{if}\ \ |x|\le 2,\\
& \in [0,1], \quad \text{if}\ \ 2<|x|< 3,\\
& 0,  \qquad  \text{if}\ \ |x|\ge 3,
\end{cases}
\end{equation*}
\begin{equation*}
g_2(x)=\begin{cases}
& 0, \qquad \text{if}\ \ |x|\le 1,\\
& \in [0,1], \quad \text{if}\ \ 1<|x|< 2,\\
& 1,\qquad \text{if}\ \ |x|\ge 2.
\end{cases}
\end{equation*}
Suppose that the constants $q_1,q_2, q_3$ and $ q_4$  satisfy the following relations:
$$q_4+2>2q_2, \quad 1-\frac{d}{2(d+1)}<q_1<1,  \quad 2(1-q_1)<q_3<\frac{d}{d+1}.$$
 Then $\{X_k\}_{k=0}^d$  satisfy Assumption \ref{assumption1} and
the corresponding SDE (\ref{sde}) is strongly complete.
\end{example}

We first check the ellipticity condition. If $q_2\ge 0$,
\begin{equation*}
\sum_{i,j=1}^d a_{i,j}(x)\xi_i\xi_j \ge |\xi|^2,\ \ \forall\ \xi=(\xi_1,\dots, \xi_d)\in \R^d.
\end{equation*}
If $q_2<0$,
\begin{equation*}
\sum_{i,j=1}^d a_{i,j}(x)\xi_i\xi_j \ge \frac{C |\xi|^2}{1+|x|^{-q_2}},\ \ \forall\  \xi=(\xi_1,\dots, \xi_d)\in \R^d.
\end{equation*}
In both cases (\ref{c1}) is true.

It is obvious that (\ref{c2aa}) holds,  and for $|x|$ sufficiently large,
\begin{align*}
&\sup_{|y|\le 1}\Big(\sum_{k=1}^d p|X_k(x+y)|^2+\langle x , X_0(x+y)\rangle \Big)\\
&\le C(p)|x|^{2q_2}+\sup_{|y|\le 1}\Big(-|x+y|^{q_4}\langle x, (x+y)\rangle\Big)\\
&\le C(p)|x|^{2q_2}-C(|x|^{q_4}-1)|x|^2+C\sup_{|y|\le 1}\big(|x|^{q_4+1}|y|\big)\\
&\le -C(p)(1+|x|^{q_4+2}),\end{align*}
where the last step is due to the assumption $q_4+2>2q_2$. We have proved (\ref{c2}).

We prove below that $X_k \in W_{\loc}^{1,p}(\R^d;\R^d)$. Firstly for every $1\le k \le d$, $X_k$ is smooth
on $\R^d \backslash \{0\}$, we only need to consider the domain
$\{x \in \R^d; 0<|x| \le 1\}$.  Let $\otimes$ denote the tensor product operator and let $\mathbf{I}:\R^d \to \R^d$ denote
identity map. For all $x \in \R^d$ with $0<|x|\le 1$,
\begin{equation}\label{ex1-1}
\begin{split}
DX_k(x)&=q_1|x|^{q_1-2}e_k\otimes x,\\
DX_0(x)&=q_3|x|^{-q_3-2}x \otimes x-(1+|x|^{-q_3})\mathbf{I}.
\end{split}
\end{equation}
So for every $x \in \R^d$ with $0<|x|\le 1$,
\begin{equation*}
\begin{split}
|DX_k(x)|\le C|x|^{q_1-1},\ \ |DX_0(x)|\le C|x|^{-q_3}.
\end{split}
\end{equation*}
The condition $q_3<\frac{d}{d+1}$ and $0<1-q_1<\frac{d}{2(d+1)}$ ensure that, for $1\le k \le m$,
$X_k$ belongs to  $ W^{1,p_3}_{\loc}(\R^d;$ $\R^d)$ and  $X_0$ belongs to $ W^{1,p_4}_{\loc}(\R^d;\R^d)$
for some constants $p_3$ and $p_4$ satisfying the following relations
$$2(d+1)<p_3<\frac{d}{1-q_1}, \qquad d+1<p_4<\frac{d}{q_3}.$$

For the local exponential integrability, (\ref{c3}), we again only need to consider  the domain
$\{x \in \R^d; 0<|x| \le 1\}$.
From (\ref{ex1-1}) we know that,  $$\sup_{|\xi|=1}\langle DX_0(x)\xi, \xi\rangle \le
-(1-q_3)|x|^{-q_3} \quad \forall\  0<|x|\le 1.$$ Therefore for $|x|$ small enough,
\begin{equation*}
K_p(x)\le C(p)|x|^{-2(1-q_1)}-C|x|^{-q_3}\le -C(p)|x|^{-q_3} \le 0,
\end{equation*}
where we use condition $q_3 >2(1-q_1)$. Hence (\ref{c3}) holds.

If $|x|>3$,
\begin{equation*}
\begin{split}
&DX_k(x)=q_2|x|^{q_2-2}e_k\otimes x ,\ \ \ \ 1\le k \le d,\\
&DX_0(x)=-(1+|x|^{q_4})\mathbf{I}-q_4|x|^{q_4-2}x\otimes x.
\end{split}
\end{equation*}
(\ref{c4}-\ref{c4aa}) of
Assumption \ref{assumption1} follows from $q_4+2>2q_2$.

\section{A convergence Lemma}\label{sec-convergence}
Let $Y_k^{\varepsilon}\in C^{\infty}(\R^d;\R^d)$, $0\le k \le m$, $\ee \in (0,\ee_0)$
be a family of smooth vector fields, where  $\ee_0$ is a positive constant. We consider the
following SDE
\begin{equation}\label{e3}
\begin{split}
dy_t^{\ee}=\sum_{k=1}^m Y_k^{\ee}(y_t^{\ee})\,dW_t^k+Y_0^{\ee}(y_t^{\ee})\,dt.
\end{split}
\end{equation}
Since each $Y_k^{\varepsilon}$ is smooth it is well known that
 (\ref{e3}) has a  unique maximal strong solution. Throughout this section
  we also assume that  (\ref{e3}) is complete  for each $\epsilon \in (0,\ee_0)$  and we denote by $(\phi^{\ee}_t(x))$  its
strong solution  with initial point $x \in \R^d$.

 Let $\{Y_k\}_{k=0}^m$ be Borel measurable vector fields on $\R^d$. Now we do not
 assume any regularity assumption on the vector fields $\{Y_k\}_{k=0}^m$
 and then have no information on  the existence or the uniqueness of a strong solution to the following SDE
\begin{equation}\label{e4}
\begin{split}
dy_t=\sum_{k=1}^m Y_k (y_t)\,dW_t^k+Y_0(y_t)\,dt.
\end{split}
\end{equation}
One well known method for the existence of  a strong solution is the
Watanabe-Yamada method:  if there is a weak solution and  the pathwise uniqueness holds for  SDE (\ref{e4}), then
 there exists a unique strong solution to (\ref{e4}), see e.g. \cite{Ikeda-Watanabe}.


In Lemma \ref{lem5} we prove that under suitable conditions, the solutions of (\ref{e3}) converges to
the unique strong solution to (\ref{e4}).
As pointed in N. V. Krylov and A. K. Zvonkin \cite{Krylov-Zvonkin}, and H. Kaneko and S. Nakao \cite{Kaneko-Nakao}, the
pathwise uniqueness of (\ref{e4}) is crucial for the convergence of the strong solution of (\ref{e3})
to that of (\ref{e4}) as $\ee \rightarrow 0$. Lemma \ref{lem5} is applied later for the convergence of the derivative flow
equation (\ref{sde1}).  We first need the following lemma on the convergence of stochastic integrals, which is essentially due to
A. V. Skorohod \cite{Skorohod},
see also I. Gy\"ongy and T. Martinez \cite[Lemma 5.2]{Gyongy-Martinez}.

\begin{lemma}(\cite{Skorohod})\label{lem4}
Let $W_t$ and $\{W_t^{(n)}\}_{n=1}^{\infty}$ be $\R^m$-valued Brownian motions, 
let $\xi(t)$ and  $\{\xi_n(t)\}_{n=1}^{\infty}$ be
$\R^{m \times d}$-valued stochastic processes such that for all $t \ge 0$ the following It\^o integrals are well defined:
 $$I_n(t):=\int_0^t \xi_n(s)dW_s^{(n)}, \quad I(t):=\int_0^t \xi(s)dW_s.$$
  Suppose that for some $T>0$, $\lim_{n\to \infty}
  \sup_{t \in [0,T]}|\xi_n(t)-\xi(t)|=0$ and
$\lim_{n\to \infty} \sup_{t \in [0,T]}|W^{(n)}_t-W_t|=0$ with convergence in probability.
Assume that for some $\delta>0$,
\begin{equation}\label{lem4-1}
\sup_{n}\int_0^T \E \left(|\xi_n(t)|^{2+\delta}\right)dt<\infty.
\end{equation}
 Then for every $\kappa>0$,
\begin{equation*}
\lim_{n \to \infty}\mathbb{P}\Big(\sup_{t \in [0,T]}|I_n(t)-I(t)|\ge \kappa\Big)=0.
\end{equation*}
\end{lemma}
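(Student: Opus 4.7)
The plan is to split the difference into two pieces,
\begin{equation*}
I_n(t)-I(t) \;=\; \underbrace{\int_0^t \bigl(\xi_n(s)-\xi(s)\bigr)\,dW_s^{(n)}}_{A_n(t)} \;+\; \underbrace{\int_0^t \xi(s)\,dW_s^{(n)} - \int_0^t \xi(s)\,dW_s}_{B_n(t)},
\end{equation*}
and show that $\sup_{t\in[0,T]}|A_n(t)|\to 0$ and $\sup_{t\in[0,T]}|B_n(t)|\to 0$ in probability separately. The basic workhorse in both cases is the standard maximal estimate for a continuous local martingale $M$ with quadratic variation $\<M\>$, namely
\begin{equation*}
\p\Bigl(\sup_{t\le T}|M_t|\ge \kappa\Bigr) \le \frac{\eta}{\kappa^{2}} + \p\bigl(\<M\>_T\ge \eta\bigr),
\end{equation*}
which reduces the desired convergence in probability to a control of the quadratic variation.

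For $A_n$, the quadratic variation is $\int_0^T |\xi_n(s)-\xi(s)|^2\,ds$, and this tends to $0$ in probability because it is bounded above by $T\,\sup_{s\in[0,T]}|\xi_n(s)-\xi(s)|^{2}$; thus the martingale inequality above gives the desired estimate for $A_n$ directly, without using the uniform $L^{2+\delta}$ bound. For $B_n$ I would proceed in three approximation steps: first truncate by using $\xi^{N}(s):=\xi(s)\1_{\{|\xi(s)|\le N\}}$, where the uniform $L^{2+\delta}$ bound (\ref{lem4-1}) together with Fatou's lemma implies the same bound for the limit $\xi$ and supplies uniform integrability, so that
\begin{equation*}
\limsup_{N\to\infty}\sup_n\int_0^T \E\bigl(|\xi(s)-\xi^{N}(s)|^2+|\xi_n(s)-\xi^{N}(s)|^{2}\bigr)\,ds \;=\; 0,
\end{equation*}
reducing the problem for $B_n$ to the case of a bounded integrand $\xi^{N}$. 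Second, approximate the bounded integrand by simple (step) processes in $L^2([0,T]\times\Omega)$, with the remainder controlled again by the maximal inequality. Third, for a simple process the stochastic integral is an explicit Riemann-type sum in the increments of $W^{(n)}$, so the uniform convergence $\sup_{t\le T}|W_t^{(n)}-W_t|\to 0$ in probability yields uniform convergence of these sums.

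The main obstacle is the delicate interchange between the three modes of convergence (uniform in time, in probability, and in $L^{2}$ on $[0,T]\times\Omega$): the cutoff step is precisely where the hypothesis (\ref{lem4-1}) is indispensable, since without uniform integrability one could not replace the unbounded limit integrand by a bounded one before approximating by simple processes. A related technical subtlety is that $\xi_n$ and $W^{(n)}$ must be adapted to a common filtration for $I_n(t)$ to make sense; once this compatibility is assumed at the outset (as is implicit in the statement), the three steps above combine via a standard $\ee/3$ argument to give the required estimate $\p(\sup_{t\le T}|I_n(t)-I(t)|\ge \kappa)\to 0$.
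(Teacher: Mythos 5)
There is a genuine gap, and it sits at the very first line: the decomposition $I_n(t)-I(t)=A_n(t)+B_n(t)$ with $A_n(t)=\int_0^t(\xi_n(s)-\xi(s))\,dW_s^{(n)}$ and $B_n(t)=\int_0^t\xi(s)\,dW_s^{(n)}-\int_0^t\xi(s)\,dW_s$ requires the cross-integral $\int_0^t\xi(s)\,dW_s^{(n)}$ to be a well-defined It\^o integral, i.e.\ it requires $\xi$ to be adapted (progressively measurable) with respect to some filtration under which $W^{(n)}$ is still a Brownian motion. The hypotheses only give that $\xi_n$ is compatible with $W^{(n)}$ and that $\xi$ is compatible with $W$ (this is what makes $I_n$ and $I$ well defined); they do not give compatibility of $\xi$ with $W^{(n)}$, and you cannot call it ``implicit in the statement.'' In the situation where this lemma is actually invoked (inside the proof of Lemma \ref{lem5}, after a Skorohod representation), $\xi$ is built from the limiting process $\tilde y$, and there is no control of the joint law of $(\xi,W^{(n)})$: increments $W^{(n)}_{t'}-W^{(n)}_t$ are independent of the past of $(\xi_n,W^{(n)})$ but need not be independent of the past of $\xi$. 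Without that, $A_n$ and the martingale pieces of $B_n$ are not local martingales, so the Lenglart-type maximal inequality, the It\^o isometry for the simple-process remainder, and the Doob estimates you rely on are all unavailable. This adaptedness obstruction is precisely why the lemma is nontrivial (and attributed to Skorohod) rather than a routine $\ee/3$ argument.

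The paper's proof is structured to avoid every cross term: it truncates \emph{both} integrands at level $R$, applies the cited Lemma 5.2 of Gy\"ongy--Martinez to the bounded truncated integrals $I_n^R$, $I^R$ (that lemma compares $\int\xi_n^R\,dW^{(n)}$ with $\int\xi^R\,dW$ directly, essentially by piecewise-constant time discretization, so only sums in the increments of $W^{(n)}$ with $\xi_n$-coefficients and sums in the increments of $W$ with $\xi$-coefficients ever appear), and then controls the truncation errors $I_n-I_n^R$ and $I-I^R$ \emph{separately}, each with respect to its own Brownian motion, by BDG, Chebyshev and H\"older together with the uniform $L^{2+\delta}$ bound (\ref{lem4-1}) and Fatou's lemma for $\xi$. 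Your truncation step and your use of (\ref{lem4-1}) plus Fatou are in the same spirit as the paper's, and your treatment of the simple-process limit via $\sup_t|W^{(n)}_t-W_t|\to0$ is essentially the hidden content of the cited lemma; but to make your argument correct you would have to either add the joint-compatibility hypothesis (which would make the lemma useless for its application here) or restructure it so that, as in the paper, $\xi$ is never integrated against $W^{(n)}$.
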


\begin{proof} Let $R>0$. Define
$ \xi_n^R(t):=\big(\xi_n(t)\wedge R\big)\vee (-R)$,
$ \xi^R(t):=\big(\xi(t)\wedge R\big)\vee (-R)$ and
\begin{equation*}
\begin{split}
I_n^R(t):=\int_0^t \xi_n^R(s)dW_s^{(n)},\quad
I^R(t):=\int_0^t \xi^R(s)dW_s,
\end{split}
\end{equation*}
where $a \wedge b:=\min(a,b)$, $a \vee b:=\max(a,b)$ for every $a, b \in \R$.
Since the stochastic proceses $\{(\xi_n^R(t), t\in [0,T]), n\in \mathbb{N}_+\}$ and
$\{\xi^R(t), t\in[0,T]\}$ are uniformly bounded
and $\xi_n^R(t) \to \xi^R(t)$ in probability as $n \to \infty$,
we may apply Lemma 5.2 in  I. Gy\"ongy-T. Martinez \cite{Gyongy-Martinez} to obtain
\begin{equation*}
\lim_{n \to \infty}\p\Big(\sup_{t \in [0,T]}|I_n^R(t)-I^R(t)|\ge \kappa\Big)=0.
\end{equation*}
By Burkholder-Davis-Gundy  inequality, Chebyshev inequality and H\"older inequality,
\begin{equation}\label{lem4-3}
\begin{split}
&\p\left(\sup_{t \in [0,T]}|I_n^R(t)-I_n(t)|\ge \kappa\right)\le
{1\over \kappa^2} \sup_n\E\left(\sup_{t \in [0,T]}|I_n^R(t)-I_n(t)|^2\right)\\
&\le{C\over \kappa^2}\sup_n \E\left(\int_0^T |\xi_n(s)|^2 1_{\{|\xi_n(s)|>R\}} ds\right)\\
&\le \frac{1}{\kappa^2R^\delta} \sup_n\int_0^T \E \left( |\xi_n(s)|^{2+\delta} \right)ds.
\end{split}
\end{equation}
By (\ref{lem4-1}) the above term converges to zero uniformly for $n$ as $R\to \infty$.

By taking a subsequence if necessary we know
$\lim_{n \to \infty}\sup_{t \in [0,T]}|\xi_n(t)-\xi(t)|=0$, $\p-$ a.s..
Therefore by Fatou lemma and (\ref{lem4-1}) we obtain
\begin{equation}\label{lem4-3a}
\begin{split}
& \int_0^T \E \left( |\xi(s)|^{2+\delta} \right)ds
\le \liminf_{n \to \infty}\int_0^T \E \left( |\xi_n(s)|^{2+\delta} \right)ds\\
&\le \sup_n\int_0^T \E \left( |\xi_n(s)|^{2+\delta} \right)ds<\infty.
\end{split}
\end{equation}
So based on (\ref{lem4-3a}) and following the same procedure in (\ref{lem4-3})
we have
\begin{equation*}
\lim_{R \to \infty}
\p\left(\sup_{t \in [0,T]}|I^R(t)-I(t)|\ge \kappa\right)=0.
\end{equation*}
Note that for every $R>0$,
\begin{equation*}
\begin{split}
& \p\Big(\sup_{t \in [0,T]}|I_n(t)-I(t)|\ge \kappa\Big)
\le \p\Big(\sup_{t \in [0,T]}|I^R(t)-I(t)|\ge \kappa\Big)\\
&+\p\Big(\sup_{t \in [0,T]}|I_n^R(t)-I_n(t)|\ge \kappa\Big)
+\p\Big(\sup_{t \in [0,T]}|I_n^R(t)-I^R(t)|\ge \kappa\Big),
\end{split}
\end{equation*}
we first take  $n\to \infty$ then take $R\to \infty$ to complete the proof.
\end{proof}

Following the proof in \cite[Theorem A]{Kaneko-Nakao} and  \cite[Theorem 2.2]{Gyongy-Martinez}, we can show the
following result about the convergence of general SDE (\ref{e3}), which is suitable for our application
(to the derivative flow equation).

\begin{lemma}\label{lem5}
Fix a $T>0$, let $\mu^{\ee,x}$ denote the distribution of the process $(\phi^\epsilon_{\cdot}(x),$ $ t\le T)$
on the  path space $\W:=C([0,T];\R^d)$. Assume that pathwise uniqueness holds for (\ref{e4}).
We suppose that there exist some $p>2$ and $q>1$ such that  the following  conditions hold for  every compact set $K\subseteq \R^d$.
  \begin{enumerate}
 \item[(1)] For all $1\le k \le m$,
 \begin{equation}\label{lem5-1}
\begin{split}
&\sup_{\ee, \tilde \ee \in (0, \ee_0)}\sup_{x \in K}\int_0^T
\E\left( |Y_k^{\ee}(\phi^{\tilde \ee}_t(x))|^p\right)dt<\infty,\\
& \sup_{\ee, \tilde \ee \in (0, \ee_0)}\sup_{x \in K}\int_0^T
\E\left( |Y_0^{\ee}(\phi^{\tilde \ee}_t(x))|^q\right)dt<\infty;
\end{split}
\end{equation}

\item[(2)] For all $1\le k \le m$,
\begin{equation}\label{lem5-2}
\begin{split}
&\limsup_{\ee, \tilde \ee \to 0}\sup_{x \in K}\int_0^T
\E\left(|Y_k^{\ee}(\phi^{\ee}_t(x))-Y_k^{\tilde \ee}(\phi^{\ee}_t(x))|^p\right)dt=0,\\
&\limsup_{\ee, \tilde \ee \to 0}\sup_{x \in K}\int_0^T
\E\left(|Y_0^{\ee}(\phi^{\ee}_t(x))-Y_0^{\tilde \ee}(\phi^{\ee}_t(x))|^q\right)dt=0;
\end{split}
\end{equation}
\item[(3)] Let $\{x_n\}_{n=1}^{\infty} \subseteq K$ and $\{\ee_n\}_{n=1}^{\infty}\subseteq (0,\ee_0)$.
 If $\mu^{\ee_n,x_n}$ converges weakly to a limit measure $\mu^0$, then for every
$1\le k \le m$,
\begin{equation}\label{lem5-3}
\begin{split}
&\lim_{\ee \to 0}\int_0^T
\int_{\W}\left|Y_k^{\ee}(\sigma_t)-Y_k(\sigma_t)\right|^p \, \mu^0(d \sigma)\,dt=0,\\
&\lim_{\ee \to 0}\int_0^T
\int_{\W}\left|Y_0^{\ee}(\sigma_t)-Y_0(\sigma_t)\right|^q \, \mu^0(d \sigma)\,dt=0.
\end{split}
\end{equation}
 \end{enumerate}
Then for every $x\in \R^d$ there exists a unique complete strong solution $\phi_t(x)$ with initial point
$x \in \R^d$,  to (\ref{e4}). Moreover
for every compact set $K \subseteq \R^d$,
\begin{equation}\label{lem5-4}
\lim_{\ee \to 0} \sup_{x \in K}\E \left(\sup_{t \in [0,T]}|\phi_t^{\ee} (x)-\phi_t(x)|\right)=0.
\end{equation}
\end{lemma}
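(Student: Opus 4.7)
The plan is to follow the Yamada--Watanabe strategy combined with Skorohod's representation theorem, in the spirit of \cite{Kaneko-Nakao} and \cite{Gyongy-Martinez}. First I would fix a compact $K \subseteq \R^d$ and show that the family of path--space laws $\{\mu^{\ee,x} : \ee \in (0,\ee_0),\ x \in K\}$ on $\W$ is tight. Tightness follows from condition (\ref{lem5-1}): using the Burkholder--Davis--Gundy and H\"older inequalities with $p>2$ and $q>1$, one obtains a uniform bound of the form $\E|\phi^\ee_t(x)-\phi^\ee_s(x)|^r \le C|t-s|^\gamma$ for some $r>1,\ \gamma>0$, which together with the bounded starting points yields Kolmogorov's tightness criterion. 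For fixed $x\in K$ this also applies to the joint family $\{(\phi^\ee_\cdot(x), W_\cdot)\}$.

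Next, take any sequence $\ee_n\to 0$ and $x_n\to x\in K$. By tightness one can extract a subsequence along which $\mu^{\ee_n,x_n}$ converges weakly to some $\mu^0$. By Skorohod's representation theorem, applied to the joint law of the solution and the driving Brownian motion, we may realise on a common probability space processes $(\tilde\phi^n, W^{(n)})$ and $(\tilde\phi, W)$ such that $\tilde\phi^n$ solves (\ref{e3}) driven by $W^{(n)}$, the law of $\tilde\phi$ is $\mu^0$, and $(\tilde\phi^n, W^{(n)}) \to (\tilde\phi, W)$ $\p$-a.s.\ uniformly on $[0,T]$. The central task is then to pass to the limit in the identity
\[
\tilde\phi^n_t = x_n + \sum_{k=1}^m \int_0^t Y_k^{\ee_n}(\tilde\phi^n_s)\,dW^{(n),k}_s + \int_0^t Y_0^{\ee_n}(\tilde\phi^n_s)\,ds.
\]
For the drift term, convergence in $L^1(\,ds\otimes d\p)$ follows from $\int_0^T \E|Y_0^{\ee_n}(\tilde\phi^n_s) - Y_0(\tilde\phi_s)|\,ds \to 0$, which is obtained from (\ref{lem5-2}) (to control $Y_0^{\ee_n}-Y_0^{\tilde\ee_n}$ along $\tilde\phi^n$), (\ref{lem5-3}) (to compare $Y_0^{\ee_n}$ with $Y_0$ on the limit law $\mu^0$), together with the uniform integrability from (\ref{lem5-1}). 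For the diffusion term, I would apply Lemma \ref{lem4} with $\xi_n(s) = Y_k^{\ee_n}(\tilde\phi^n_s)$ and $\xi(s) = Y_k(\tilde\phi_s)$: the uniform $L^{2+\delta}$-bound on $\xi_n$ (with $\delta = p-2>0$) is supplied by (\ref{lem5-1}), and the convergence $\sup_{t\le T}|\xi_n(t)-\xi(t)|\to 0$ in probability is assembled from (\ref{lem5-2})--(\ref{lem5-3}) combined with the a.s.\ uniform convergence $\tilde\phi^n \to \tilde\phi$, after extracting a further subsequence if necessary. This identifies $\tilde\phi$ as a weak solution of (\ref{e4}) starting at $x$.

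Since pathwise uniqueness is assumed for (\ref{e4}), the Yamada--Watanabe theorem now promotes weak existence to the existence of a unique strong solution $\phi_\cdot(x)$, and forces every weak limit $\mu^0$ to coincide with the law of $\phi_\cdot(x)$. Consequently the full family $\mu^{\ee,x}$ converges weakly to this law, uniformly for $x$ in $K$. To upgrade to (\ref{lem5-4}), apply Skorohod/uniqueness to the joint law of $(\phi^\ee_\cdot(x),\phi_\cdot(x))$ driven by the \emph{same} Brownian motion: any subsequence contains a further subsequence on which $\sup_{t\le T}|\phi^\ee_t(x)-\phi_t(x)|\to 0$ a.s.; hence the supremum tends to $0$ in probability along the whole family, uniformly in $x\in K$. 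The $L^1$-convergence then follows from uniform integrability, which in turn comes from the uniform moment bound derived from (\ref{lem5-1}) via the SDE itself. I expect the main obstacle to be the verification of the hypothesis $\sup_{t\le T}|\xi_n(t)-\xi(t)|\to 0$ in probability needed for Lemma \ref{lem4}, since this convergence is not assumed directly and must be reconstructed from the integral conditions (\ref{lem5-2})--(\ref{lem5-3}) after the Skorohod coupling has identified the limit law $\mu^0$.
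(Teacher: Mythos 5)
Your overall strategy (Skorohod representation plus pathwise uniqueness, in the Gy\"ongy--Krylov spirit) is the right one, but two steps fail as written. First, tightness: with only $p>2$, $q>1$ in (\ref{lem5-1}), BDG and H\"older give at best $\E|\phi^{\ee}_t(x)-\phi^{\ee}_s(x)|^p\le C|t-s|^{p/2-1}$ for the martingale part and $\E|\cdot|^q\le C|t-s|^{q-1}$ for the drift part; Kolmogorov's tightness criterion needs a H\"older exponent strictly larger than $1$, which would require $p>4$, $q>2$. So the criterion you invoke is not available under the stated hypotheses. The paper instead uses the semimartingale tightness criterion of Zheng/Rebolledo, which only needs uniform bounds in probability on $\int_0^T|u^{n,i}_t|^{p'}dt$ and $\int_0^T|a^{n,i}_t|^{p'}dt$ with $p'=\min(p/2,q)>1$, and these do follow from (\ref{lem5-1}); an Aldous-type argument would also work. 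This is fixable, but not by the route you state.

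The more serious gap is the identification step, which you flag yourself. Lemma \ref{lem4} requires $\sup_{t\le T}|\xi_n(t)-\xi(t)|\to 0$ in probability, and with $\xi_n(s)=Y_k^{\ee_n}(\tilde\phi^n_s)$, $\xi(s)=Y_k(\tilde\phi_s)$ this cannot be ``assembled'' from (\ref{lem5-2})--(\ref{lem5-3}): those conditions are purely time-integrated (in $L^p(dt\otimes d\p)$, resp.\ $L^p(dt\otimes\mu^0)$), and even after extracting subsequences they yield only $dt\otimes\p$-a.e.\ (or in-measure) convergence, never uniform-in-$t$ convergence in probability, since for fixed $\ee$ the quantity $|Y_k^{\ee}(\tilde\phi_t)-Y_k(\tilde\phi_t)|$ need not be small for all $t$ simultaneously when $Y_k$ is merely measurable. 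The paper's resolution is to never apply Lemma \ref{lem4} to the limit coefficient: fix $n_0$, split the stochastic integral into $I_1^{n,n_0}+I_2^{n,n_0}+I_3^{n_0}$, estimate $I_1$ and $I_3$ directly in $L^2$ by BDG/isometry using exactly the time-integrated bounds (\ref{lem5-2}) and (\ref{lem5-3}), and use Lemma \ref{lem4} only for $I_2$, where the coefficient $Y_k^{\ee_{n_0}}$ is a fixed smooth function so that $\sup_t|Y_k^{\ee_{n_0}}(\tilde\phi^n_t)-Y_k^{\ee_{n_0}}(\tilde\phi_t)|\to0$ a.s.\ comes from the Skorohod coupling; then send $n\to\infty$ first and $n_0\to\infty$ second. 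Two further points are asserted without proof: that the limiting $\tilde W$ is a Brownian motion for the filtration generated by the coupled limit (needed for the limiting integrals to be It\^o integrals with $\tilde\phi$ adapted), and the uniformity in $x\in K$ in (\ref{lem5-4}). The paper obtains the latter, and bypasses your separate Yamada--Watanabe and final coupling steps, by arguing by contradiction with a pair of approximations $(\phi^{\ee_{n,1}}_\cdot(x_n),\phi^{\ee_{n,2}}_\cdot(x_n),W_\cdot)$ with $x_n\in K$ chosen to nearly realize the supremum: pathwise uniqueness forces the two limits to coincide, giving the Cauchy property uniformly on $K$ and constructing the strong solution in one stroke.
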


\begin{proof}
We suppose that there is a compact set $K_0 \subseteq \R^d$,
such that
\begin{equation}\label{lem5-5a}
\limsup_{\ee, \tilde \ee \to 0}
\sup_{x \in K_0}\E \left(\sup_{t \in [0,T]} |\phi_t^{\ee} (x)-\phi_t^{\tilde \ee}(x)|\right)>0,
\end{equation}
then there exist $\kappa>0$, $\{x_n\}_{n=1}^{\infty}\subseteq K_0$, and  two sequences $\{\ee_{n,1}\}_{n=1}^{\infty}$,
$\{\ee_{n,2}\}_{n=1}^{\infty}$ contained in $ (0,\ee_0)$ such that
\begin{equation}\label{lem5-5}\lim_{n\to \infty}\E \left(\sup_{t \in [0,T]}
|\phi_t^{\ee_{n,1}}(x_n) -\phi_t^{\ee_{n,2}}(x_n)|\right)>\kappa.
\end{equation}
Let $z^n_{\cdot}=\big(\phi_{\cdot}^{\ee_{n,1}}(x_n), \phi_{\cdot}^{\ee_{n,2}}(x_n), W_{\cdot} \big)$ and
let $\nu^{n}$ be the distribution of
$z^n_{\cdot}$ on the  path space $C([0,T];\R^{2d+m})$.

Note that $z_t^{n}$ is a semi-martingale, we will
apply \cite[Theorem 3]{Zheng85} or \cite{Rebolledo} to show that
the family of probability measures $\{\nu^{n}\}_{n=1}^{\infty}$
on $C([0,T]; \R^{2d+m})$ is tight.
In particular, as
in \cite[Theorem 3]{Zheng85} or \cite{Rebolledo}, it suffices to verify the uniformly bounded property for the
variational processes and the drift processes of the semi-martingales $\{z^n_{\cdot}\}_{n=1}^{\infty}$.

Note that $\phi_{t}^{\ee_{n,i}}(x_n)=x_n+M_t^{n,i}+A_t^{n,i}$, $i=1,2$, where
\begin{equation*}
\begin{split}
&M_t^{n,i}:=\sum_{k=1}^m\int_0^t Y_k^{\ee_{n,i}}
\big(\phi_{s}^{\ee_{n,i}}(x_n)\big)dW_s^k
,\ \ A_t^{n,i}:=\int_0^t Y_0^{\ee_{n,i}}
\big(\phi_{s}^{\ee_{n,i}}(x_n)\big)ds.
\end{split}
\end{equation*}
Let $\left\<M^{n,i} \right\>_t$ be the variational process for
$M^{n,i}$. We define
\begin{equation*}
\begin{split}
& u^{n,i}_t:=\sum_{k=1}^m|Y_k^{\ee_{n,i}}
\big(\phi_{t}^{\ee_{n,i}}(x_n)\big)|^2,
\ \  a^{n,i}_t:=Y_0^{\ee_{n,i}}
\big(\phi_{t}^{\ee_{n,i}}(x_n)\big).
\end{split}
\end{equation*}
Hence
\begin{equation*}
\begin{split}
& \left\<M^{n,i} \right\>_t=\int_0^t u^{n,i}_s ds,
\ \  A^{n,i}_t=
\int_0^t a^{n,i}_s ds.
\end{split}
\end{equation*}
From (\ref{lem5-1}) we know for $p':=\min\{\frac{p}{2},q\}>1$,
\begin{equation*}
\sup_n\E\Big(\int_0^T |u^{n,i}_t|^{p'}dt\Big)<\infty,
\ \sup_n\E\Big(\int_0^T |a^{n,i}_t|^{p'}dt\Big)<\infty,\ i=1,2,
\end{equation*}
which implies that the following random variables
\begin{equation*}
\Big\{x_n,\int_0^T |u^{n,i}_t|^{p'}dt, \int_0^T |a^{n,i}_t|^{p'}dt,\
n \in \mathbb{N}_+,\ i=1,2\Big\}
\end{equation*}
are uniformly bounded in probability. Therefore according to
\cite[Theorem 3]{Zheng85}, $\{\nu^{n}\}_{n=1}^{\infty}$ is tight.

By the Skorohod theorem, see e.g. Theorem 2.7 of Chapter 1 in \cite{Ikeda-Watanabe},
we can find a subsequence of $\{z_{\cdot}^n\}_{n=1}^{\infty}$ which will also be denoted by
$\{z_{\cdot}^n\}_{n=1}^{\infty}$ for simplicity, and
there exists a probability space $(\tilde \Omega, \tilde \F, \tilde \p)$ on which
there is
a sequence of $\R^{2d+m}$-valued stochastic processes $\tilde z_{\cdot}^n:=\big(\tilde y_\cdot^{n,1},
\tilde y_\cdot^{n,2}, \tilde W_\cdot^n\big)$ with the property that $\tilde z_{\cdot}^n$ has the same
distribution with $z^n_{\cdot}$, and
\begin{equation}\label{lem5-6}
\lim_{n \to \infty}\sup_{t \in [0,T]}\big|\tilde z_t^n-\tilde z_t\big|=0,\ \ \ \
\tilde \p-a.s.
\end{equation}
for some $\R^{2d+m}$-valued process $\tilde z_{\cdot}=\big(\tilde y_\cdot^{1},\tilde y_\cdot^{2}, \tilde W_\cdot\big)$.

Condition (\ref{lem5-1}) implies that
 $\{\sup_{t \in [0,T]}|\tilde z_t^n|\}_{n=1}^{\infty}$ is uniformly integrable which follows from
a  round of BDG inequality and H\"older inequality, therefore we have
\begin{equation*}
\lim_{n \to \infty}\tilde \E \left(\sup_{t \in [0,T]}\big|\tilde z_t^n-\tilde z_t\big|\right)=0.
\end{equation*}
By  (\ref{lem5-5}) we also obtain that
\begin{equation}\label{lem5-8}
\tilde \E\left(\sup_{t \in [0,T]}\big|\tilde y_t^1-\tilde y_t^2\big|\right)>\kappa.
\end{equation}

Since $\tilde z_\cdot^n\stackrel {law}{=}z_\cdot^n$, for every $0\le s<t\le T$, $\tilde W_t^n-\tilde W_s^n$ is independent of the $\sigma$-algebra
$\G_s^n:=\sigma\{ \tilde z_{r}^n;\ 0\le r\le s\}$. Hence for every $j\in \mathbb{N}_+$ and
$f \in C_b(\R^{(2d+m)j})$, $g \in C_b(\R^m)$,
$0< s_1< s_2< \dots < s_j<s<t\le T$,
\begin{equation*}
\begin{split}
& \tilde \E \left(g(\tilde W_t^n-\tilde W_s^n)f(\tilde z_{s_1}^n, \dots, \tilde z_{s_j}^n)\right)
=\tilde \E \left(g(\tilde W_t^n-\tilde W_s^n)\right)\tilde \E
\left(f(\tilde z_{s_1}^n, \dots, \tilde z_{s_j}^n)\right).
\end{split}
\end{equation*}
Set $\G_s=\sigma\{\tilde z_{r};\ 0\le r\le s\}$. Taking $n\to \infty$ in the above identity and using (\ref{lem5-6}) we obtain that
\begin{equation*}
\begin{split}
& \tilde \E \left(g(\tilde W_t-\tilde W_s)f(\tilde z_{s_1}, \dots, \tilde z_{s_j})\right)
=\tilde \E \left(g(\tilde W_t-\tilde W_s)\right)\tilde \E
\left(f(\tilde z_{s_1}, \dots, \tilde z_{s_j})\right),
\end{split}
\end{equation*}
which implies that $\tilde W_t-\tilde W_s$ is independent of the $\sigma$-algebra $\G_s$. Since $\tilde W_{\cdot}$ is the limit of  the family of Brownian motions $\tilde W_\cdot^{n}$, it has the same finite dimensional distribution as $W_{\cdot}$,
therefore $\tilde W_{\cdot}$ is a Brownian motion with respect to the filtration $(\G_s, 0\le s\le T)$.

In the computation below we will drop the index $1$, so $\tilde y_t^{n,1}$, $\tilde y_t^1$,  $\ee_{n,1}$
 will be denoted by $\tilde y_t^{n}$, $\tilde y_t$
and  $\ee_{n}$ respectively.
We  use again the fact that $\tilde z_\cdot^n\stackrel {law}{=}z_\cdot^n$ to observe that $(\tilde y_t^{n}, \tilde W_t^n)$
is a strong solution to
SDE (\ref{e3}) with
$\ee=\ee_{n}$, i.e.
\begin{equation}\label{lem5-9}
\tilde y_t^{n}=x_n+\sum_{k=1}^m \int_0^t Y_k^{\ee_{n}}(\tilde y_s^{n})\, d\tilde W_s^{n,k}+
\int_0^t Y_0^{\ee_{n}}(\tilde y_s^{n})\, ds,
\end{equation}
where $\tilde W_t^n=(\tilde W_t^{n,1},\dots,\tilde W_t^{n,m})$ denotes the components of
$\tilde W_t^n$. Next we will take the limit  $n\to \infty$ in (\ref{lem5-9}) to prove that
 $(\tilde y_t, \tilde W_t)$ is a strong solution to (\ref{e4}).


For  a fixed $n_0\in \mathbb N_+$, we define
\begin{equation*}
\begin{split}
& I_1^{n,n_0}(t)=\sum_{k=1}^m\int_0^t \left(Y_k^{\ee_{n}}(\tilde y_s^{n})-Y_k^{\ee_{n_0}}(\tilde y_s^{n})\right)d\tilde W_s^{n,k}\\
& I_2^{n,n_0}(t)=\sum_{k=1}^m\left(\int_0^t Y_k^{\ee_{n_0}}(\tilde y_s^{n})\,d\tilde W_s^{n,k}-
\int_0^t Y_k^{\ee_{n_0}}(\tilde y_s)\, d\tilde W_s^{k}\right),\\
& I_3^{n_0}(t)=\sum_{k=1}^m\int_0^t  \left( Y_k^{\ee_{n_0}}(\tilde y_s)-Y_k(\tilde y_s)\right)\,d\tilde W_s^{k}.
\end{split}
\end{equation*}

We use condition (\ref{lem5-2}), BDG inequality and H\"older inequality to obtain the following estimate for $I_1^{n,n_0}$,
\begin{equation}\label{lem5-7}
\begin{split}
&\limsup_{n_0 \to \infty}\limsup_{n \to \infty}\tilde \E\left(
\sup_{t \in [0,T]}|I_1^{n,n_0}(t)|^p\right)\\
&\le C(p)\sum_{k=1}^m\limsup_{n_0 \to \infty}\limsup_{n \to \infty}
\tilde \E\left(\int_0^T | Y_k^{\ee_n}
(\tilde y_t^n)-Y_k^{\ee_{n_0}}
(\tilde y_t^n)|^2 dt\right)^{\frac{p}{2}} \\
&\le C(p,T) \sum_{k=1}^{m}
\limsup_{\ee, \tilde \ee \to 0}\sup_{x \in K_0}\int_0^T
\E\big(|Y_k^{\ee}(\phi^{\ee}_t(x))-Y_k^{\tilde \ee}(\phi^{\ee}_t(x))|^p\big)\, dt=0.
\end{split}
\end{equation}
Now we work on the second integral. Since $Y_k^{\ee_{n_0}} \in C^{\infty}(\R^d;\R^d)$,
by (\ref{lem5-6}), we know for every
fixed $n_0$,
\begin{equation*}
\lim_{n \to \infty}\sup_{t \in [0,T]}
|Y_k^{\ee_{n_0}}(\tilde y_t^n)-Y_k^{\ee_{n_0}}(\tilde y_t)|=0,\
\ \tilde \p-a.s..
\end{equation*}
Due to
condition (\ref{lem5-1}), we may apply the convergence  Lemma \ref{lem4} for stochastic integrals
and conclude that  for every fixed
$n_0$, $\sup_{t \in [0,T]}|I_2^{n,n_0}(t)|$ converges to $0$ in probability as
$n \to \infty$. In an analogous way to (\ref{lem5-7}), by condition (\ref{lem5-1}),
we can show that $\{\sup_{t \in [0,T]}|I_2^{n,n_0}(t)|^2\}_{n=1}^{\infty}$  is uniformly integrable,
therefore for every fixed $n_0$,
\begin{equation*}
\limsup_{n \to \infty}
\tilde \E\left( \sup_{t \in [0,T]}|I_2^{n,n_0}(t)|^2\right)=0.
\end{equation*}
From (\ref{lem5-6}) the distribution $\mu^0$ of $\tilde y_{\cdot}$ is a weak limit
of  $\mu^{\ee_n,x_n}$,  therefore the condition (\ref{lem5-3})  can be applied to the third integral
and we have
\begin{equation*}
\limsup_{n_0 \to \infty}
\tilde \E\left(\sup_{t \in [0,T]}|I_3^{n_0}(t)|^2\right)=0.
\end{equation*}
Combing all the estimates above for $I_1^{n,n_0}$, $I_2^{n,n_0}$,
$I_3^{n_0}$,  we first take $n \to \infty$ then take $n_0 \to \infty$ to obtain
\begin{equation*}
\begin{split}
& \lim_{n \to \infty}
\sum_{k=1}^m \tilde \E\left( \sup_{t \in [0,T]}\left|\int_0^t Y_k^{\ee_{n}}(\tilde y_s^{n})\,d\tilde W_s^{n,k}
-\int_0^t Y_k(\tilde y_s)\,d\tilde W_s^{k}\right|^2\right)=0.
\end{split}
\end{equation*}
By the same method we also prove that
\begin{equation*}
\begin{split}
& \lim_{n \to \infty}
\tilde \E\left(\sup_{t \in [0,T]}\left|\int_0^t Y_0^{\ee_{n}}(\tilde y_s^{n})ds
-\int_0^t Y_0(\tilde y_s)ds\right|\right)=0.
\end{split}
\end{equation*}
Finally we take $n \to \infty$ in (\ref{lem5-9}) to see that
\begin{equation*}
\tilde y_t=x_0+\sum_{k=1}^m \int_0^t Y_k(\tilde y_s)d\tilde W_s^{k}+
\int_0^t Y_0(\tilde y_s)ds.
\end{equation*}
The above argument applies equally to $\tilde y_t^{n,2}$ and we prove that both  $(\tilde y_t^1, \tilde W_t)$
and $(\tilde y_t^2, \tilde W_t)$  are $\G_t$ adapted strong solution to (\ref{e4}) with initial value $x_0$.
Consequently by the pathwise uniqueness for (\ref{e4}), for every
$t \in [0,T]$, $\tilde y_t^1=\tilde y_t^2$, $\tilde \p-a.s.$,  and
\begin{equation*}
\tilde \E\left(\sup_{t \in [0,T]}|\tilde y_t^1-\tilde y_t^2|\right)=0,
\end{equation*}
which contradicts with (\ref{lem5-8}). So the assumption (\ref{lem5-5a}) is not true, the sequence
$\sup_{t \in [0,T]} |\phi_t^{\ee} (x)$ $-\phi_t^{\tilde \ee}(x)|$ must be a Cauchy sequence
as $\ee, \tilde \ee \to 0$, and there exists
a stochastic process $\phi_{\cdot}(x)$, such that the convergence in (\ref{lem5-4}) holds.  By the same
approximation argument above, $(\phi_{\cdot}(x), W_\cdot)$ is the unique complete strong solution to
(\ref{e4}) with initial point $x$.
\end{proof}



\section{An estimate for the probability distribution }\label{sec-distribution}
Let
$\L={1\over 2} \sum_{i,j=1}^d$ $ a_{i,j}$ ${\partial^2\over \partial x_i\partial x_j}$
$ +\sum_{i=1}^d X_{0i}{\partial \over \partial x_i} $.
If $A(x)$ is strictly elliptic,
 $\{X_k\}_{k=0}^m$ are bounded and uniformly H\"older continuous,  there is a Gaussian type
 upper and lower bound  for the
 fundamental solution to the parabolic PDE ${\partial u_t\over \partial t}=\L u_t$. Such estimates are used in our earlier work  \cite{Chen-Li-Arxiv}, an unpublished notes.
But under Assumption \ref{assumption1}, we are not sure whether  such estimate is true, so we will apply
Lemma \ref{lem3} instead.

We first cite a lemma on the distributions of continuous semi-martingales,
which is a special case of that in N. V. Krylov \cite[Lemma 5.1]{Krylov-86}, see also
I. Gy\"ongy and T. Martinez \cite[Lemma 3.1]{Gyongy-Martinez}. Let $\det (A)$ and ${\rm tr}(A)$ denote respectively  the determinant and the trace of a
$d \times d$ matrix $A$.

\begin{lemma}(\cite{Krylov-86})\label{lem1}
Suppose that $F_t(x)$ is a strong solution to (\ref{sde}) with initial point $x \in \R^d$, set
$\tilde F_t(x):=F_t(x)-x$.
For every $q\ge d+1$, $T>0$, $R>0$ and Borel measurable
function $f: \R_+ \times \R^d \rightarrow \R_+$, letting $\tau_R(x):=\inf\{t \ge 0, \ |\tilde F_t(x)|>R\}$,
we have
\begin{equation}\label{lem1-0}
\begin{split}
&\E\left(\int_0^{T \wedge \tau_R(x)}  f(t,\tilde F_t(x))\big( {\det A(F_t(x))}\big)^{\frac{1}{q}}\,dt\right)\\
&\le C(d)  \,e^{T} \left(\mathbf{A}(R)+\mathbf{B}(R)^2\right)^{\frac{d}{2q}} \left(\int_0^T\int_{|x|\le R}f^q(t,x)dx \; dt\right)^{\frac{1}{q}},
\end{split}
\end{equation}
where  $C(d)$ is a constant  depending only on
$d$ and
\begin{equation}\label{lem1-1}
\begin{split}
&\mathbf{A}(R)=\E\left(\int_0^{T \wedge \tau_R(x)} {\rm tr} A(F_t(x))dt\right),\\
&\mathbf{B}(R)=\E\left(\int_0^{T \wedge \tau_R(x)} |X_0(F_t(x))|dt\right).
\end{split}
\end{equation}
\end{lemma}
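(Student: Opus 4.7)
The plan is to reduce the statement to the general Krylov-type estimate for bounded continuous semi-martingales and then track through the stopping time $\tau_R(x)$. Since $(F_t(x))$ solves (\ref{sde}), the stopped process $(\tilde F_{t\wedge\tau_R(x)}(x))$ is a continuous semi-martingale on $[0,T]$ whose Doob--Meyer decomposition has martingale part $M_{t\wedge\tau_R}=\int_0^{t\wedge\tau_R}X(F_s)\,dW_s$ with quadratic variation matrix $\int_0^{t\wedge\tau_R}A(F_s)\,ds$, and bounded variation part $\int_0^{t\wedge\tau_R}X_0(F_s)\,ds$. The quantities $\mathbf{A}(R)$ and $\mathbf{B}(R)$ in (\ref{lem1-1}) are precisely the expected total variations of these two components up to $T\wedge\tau_R$, so they are the only data about the coefficients that the bound (\ref{lem1-0}) is allowed to see.

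First I would reduce to bounded nonnegative $f$ with compact support in $[0,T]\times\{|x|\le R\}$ by monotone convergence, so that $f\in L^q$ on that cylinder. The key tool is the Aleksandrov--Bakelman--Pucci (ABP) type estimate applied to a suitable backward parabolic problem: for nonnegative $\varphi\in C_c^\infty((0,T)\times \{|x|\le R\})$ and a classical solution $u\ge 0$ of
\begin{equation*}
\partial_t u+\tfrac12\tr\!\big(A(x)D^2 u\big)+\langle X_0(x),Du\rangle=-\varphi,\qquad u(T,\cdot)=0,
\end{equation*}
the ABP inequality controls $\sup u$ by a constant multiple of $\big\|\varphi(\det A)^{-1/(d+1)}\big\|_{L^{d+1}}$, where the constant is of the form $e^{T}(\mathbf{A}(R)+\mathbf{B}(R)^2)^{d/(2(d+1))}$. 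Applying It\^o's formula to $u(t,\tilde F_t(x))$ on $[0,T\wedge\tau_R(x)]$, using $u\ge 0$ and $u(T,\cdot)=0$, and taking expectation yields
\begin{equation*}
\E\!\int_0^{T\wedge\tau_R(x)}\!\varphi\!\big(t,\tilde F_t(x)\big)\,dt \;\le\; u(0,0),
\end{equation*}
which after choosing $\varphi=f\,(\det A)^{1/q}$ gives (\ref{lem1-0}) in the case $q=d+1$.

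For general $q\ge d+1$ I would promote the exponent by inserting the weight $(\det A)^{1/q}$ directly into $\varphi$ in the duality step, so that $\varphi(\det A)^{-1/(d+1)}$ sits in $L^q$ instead of $L^{d+1}$, and then apply the $L^q$ refinement of ABP due to Krylov; this produces the exponent $d/(2q)$ on $\mathbf{A}(R)+\mathbf{B}(R)^2$ and the $L^q$ norm of $f$. The genuine obstacle is the ABP/Krylov estimate itself, which requires the degenerate weight $(\det A)^{1/(d+1)}$ to absorb the non-ellipticity and relies on convex envelope arguments for parabolic operators; since the result is stated as a special case of \cite[Lemma 5.1]{Krylov-86}, I would not reprove it, but only verify that the variation norms appearing in Krylov's general formulation specialise to $\mathbf{A}(R)$ and $\mathbf{B}(R)$ under the decomposition of $\tilde F_{t\wedge\tau_R}(x)$ described above.
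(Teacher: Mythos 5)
Your proposal is correct and ultimately takes the same route as the paper: both simply identify the stopped process $\tilde F_{t\wedge\tau_R}(x)$ as a continuous semimartingale with martingale part $\int_0^{\cdot}X(F_s)\,dW_s$ and drift $\int_0^{\cdot}X_0(F_s)\,ds$, and invoke the general Krylov estimate (in the paper, via \cite[Lemma 3.1]{Gyongy-Martinez} with $X(t)=\tilde F_t(x)$, $A(t)=t$, $r(t)=1$, $c(t)=\1_{[0,T]}(t)$, $p=q-1$), so that $\mathbf{A}(R)$ and $\mathbf{B}(R)$ are exactly the expected variations appearing in that lemma. The ABP-style sketch of how Krylov's estimate itself is proved is superfluous to the argument, since, as you note, you do not reprove it but only verify the specialisation.
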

\begin{proof}
In  \cite[Lemma 3.1]{{Gyongy-Martinez}}, we take $X(t)=\tilde F_t(x)$, $A(t)=t$,
$d \,m(t)=X(F_t(x))\,dW_t$, $dB(t)=X_0(F_t(x))dt$, $\gamma=T$, $r(t)=1$,
$c(t)=1_{[0,T]}(t)$, $p=q-1$, and the conclusion follows.
\end{proof}

We also cite the following lemma, \cite[Lemma 6.1]{Li-flow}, which is concerned with the moment estimates for
(\ref{sde}), the regularity condition imposed on $\{X_k\}$ in \cite{Li-flow} will not be needed.
\begin{lemma}(\cite{Li-flow})\label{lem2}
Suppose that $\{X_k\}_{k=0}^{\infty}$ are locally bounded vector fields. 
Let $g: \R^d \rightarrow \R_+$ be a positive $C^2$ function. For any $\lambda>0$ let
\begin{equation}\label{e1}
\Theta_g(\lambda) =\sup_{x\in \R^d}\Big\{ \big(D g(X_0)\big)(x)+\frac{1}{2}\sum_{k=1}^m\left(  \lambda|Dg(X_k)|^2+ D^2 g(X_k,X_k)\right)(x)\Big\}.
\end{equation}
If furthermore $ \Theta_g(\lambda)<\infty $,  then for every $t>0$ and stopping time $\tau<\zeta(x)$,  we have
\begin{equation*}
\E(\e^{\lambda g(F_{t\wedge \tau}(x))})\le \e^{\lambda (g(x)+\Theta_g(\lambda)t)},
\end{equation*}
where  $F_t(x)$ is a  strong solution to (\ref{sde}) with  initial point $x \in \R^d$, and $\zeta(x)$ is the explosion time
of $F_t(x)$.
\end{lemma}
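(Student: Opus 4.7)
The plan is to apply It\^o's formula to the space-time function
$V(s,y):=\exp\bigl(\lambda g(y)-\lambda\Theta_g(\lambda)\,s\bigr)$ along the solution $F_s(x)$, and to extract the claimed bound from the fact that $V(s,F_s)$ is, by construction, a non-negative local supermartingale.

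Using $\nabla_y V=\lambda V\,\nabla g$, $\nabla_y^2 V=\lambda V\bigl(\nabla^2 g+\lambda\,\nabla g\otimes\nabla g\bigr)$, together with the identity $a_{ij}=\sum_{k=1}^m X_{ki}X_{kj}$, a direct application of It\^o's formula to $V(s,F_s)$ yields
\begin{equation*}
dV(s,F_s)=\lambda\,V(s,F_s)\sum_{k=1}^m Dg(X_k)(F_s)\,dW_s^k+\lambda\,V(s,F_s)\bigl(\Psi(F_s)-\Theta_g(\lambda)\bigr)\,ds,
\end{equation*}
where $\Psi(y):=Dg(X_0)(y)+\tfrac12\sum_{k=1}^m\bigl(\lambda|Dg(X_k)(y)|^2+D^2g(X_k,X_k)(y)\bigr)$ is exactly the argument of the supremum in (\ref{e1}). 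By the definition of $\Theta_g(\lambda)$, $\Psi(y)-\Theta_g(\lambda)\le 0$ for every $y\in\R^d$, so the $ds$-part of $V(s,F_s)$ is pointwise non-positive.

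To pass from a local supermartingale to a true one, I introduce the localising sequence $\sigma_N:=\inf\{s\ge 0:|F_s(x)|\ge N\}$, which satisfies $\sigma_N\uparrow\zeta(x)$ because $\{X_k\}$ are locally bounded. On $[0,\sigma_N]$ the solution $F_s$ takes values in a compact set, and since $g\in C^2$ the integrands $\lambda V(s,F_s)\,Dg(X_k)(F_s)$ are bounded, so the stopped stochastic integral is a genuine martingale and $V(\cdot\wedge\sigma_N,F_{\cdot\wedge\sigma_N})$ is a non-negative supermartingale. Applying optional stopping at $t\wedge\tau$ (recall $\tau<\zeta(x)$) gives
\begin{equation*}
\E\bigl[\e^{-\lambda\Theta_g(\lambda)(t\wedge\tau\wedge\sigma_N)}\,\e^{\lambda g(F_{t\wedge\tau\wedge\sigma_N})}\bigr]\le V(0,x)=\e^{\lambda g(x)}.
\end{equation*}

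In the relevant case $\Theta_g(\lambda)\ge 0$, the inequality $t\wedge\tau\wedge\sigma_N\le t$ yields $\e^{-\lambda\Theta_g(\lambda)(t\wedge\tau\wedge\sigma_N)}\ge \e^{-\lambda\Theta_g(\lambda) t}$, so pulling this deterministic factor out gives $\E[\e^{\lambda g(F_{t\wedge\tau\wedge\sigma_N})}]\le \e^{\lambda(g(x)+\Theta_g(\lambda) t)}$. Since $\tau<\zeta(x)$ and $\sigma_N\uparrow\zeta(x)$, the random times $t\wedge\tau\wedge\sigma_N$ converge up to $t\wedge\tau$ almost surely, and Fatou's lemma completes the argument; the case $\Theta_g(\lambda)<0$ can be handled by repeating the argument with $\Theta_g(\lambda)$ replaced by $0$ on the left-hand side and using $\Psi\le \Theta_g(\lambda)\le 0$ in the Grönwall step. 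The one place requiring care is this localisation: without exploiting the local boundedness of $\{X_k\}$ via $\sigma_N$, the stochastic integral is only an a priori local martingale and optional stopping at a random time could not be invoked to obtain a pointwise inequality.
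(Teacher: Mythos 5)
Your argument is correct and in substance the same as the paper's: both apply It\^o's formula along the solution, identify $\Theta_g(\lambda)$ as a uniform bound on the drift contribution, localise by the exit times from large balls (where local boundedness of $\{X_k\}$ and $g\in C^2$ make the stopped stochastic integral a true martingale), and finish with Fatou's lemma. The only real difference is packaging: the paper reduces to $\lambda=1$, writes $g(F_{t\wedge\tau})=g(x)+N_{t\wedge\tau}-\frac{1}{2}\langle N\rangle_{t\wedge\tau}+b_{t\wedge\tau}$ with $b_t\le \Theta_g(1)t$ and then uses the exponential martingale $\exp\big(N-\frac{1}{2}\langle N\rangle\big)$, whereas you absorb the constant into the space-time function $\e^{\lambda g(y)-\lambda\Theta_g(\lambda)s}$ and invoke the non-negative supermartingale property directly; these give the same estimate. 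One caveat: your closing remark on the case $\Theta_g(\lambda)<0$ does not work as described, since after stopping at $t\wedge\tau\wedge\sigma_N$ the drift bound no longer holds beyond the stopping time, so the Gr\"onwall step cannot produce the factor $\e^{\lambda\Theta_g(\lambda)t}$; indeed for $\Theta_g(\lambda)<0$ the stated inequality is false for small stopping times (take $\tau\equiv 0$). This is a defect of the statement rather than of your proof: the paper's own argument tacitly assumes $\Theta_g(1)\ge 0$ when it replaces $b_{t\wedge\tau_R\wedge\tau}$ by $\Theta_g(1)t$, and in the applications (Example \ref{ex4-1}) the bound is only used with a non-negative constant, which is exactly the regime your main argument covers.
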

\begin{proof}
The conclusion is just that of \cite[Lemma 6.1]{Li-flow}.
In \cite[Lemma 6.1]{Li-flow} the coefficients are assumed to be
$C^1$, by carefully tracking the proof, we observe that the regularity condition
in \cite[Lemma 6.1]{Li-flow} is not needed.

In fact, it  suffices to show the case where $\lambda=1$. Since
$F_t(x)$ is a strong solution to (\ref{sde}), by definition it is also
a semi-martingale. By It\^o formula,
we have for each $t>0$ and stopping time $\tau<\zeta(x)$,
\begin{equation*}
g(F_{t \wedge \tau}(x))=g(x)+N_{t\wedge \tau}-\frac{\langle N \rangle_{t \wedge \tau}}{2}+b_{t \wedge \tau},
\end{equation*}
where
\begin{equation*}
\begin{split}
 N_t=&\int_0^t Dg(F_s(x))(X(F_s(x)))dW_s, \\
 b_t=&\int_0^t \Big(\frac{1}{2}\sum_{k=1}^m |Dg(F_s(x))(X_k(F_s(x)))|^2+D g(F_s(x))(X_0(F_s(x)))\Big)ds\\
&+
\frac{1}{2}\sum_{k=1}^m \int_0^t D^2 g(F_s(x))(X_k(F_s(x)),X_k(F_s(x)))ds,
\end{split}
\end{equation*}
and  $\langle N \rangle_t$ denotes the variational process of $N_t$.
By the definition of $\Theta_g(1)$,  $b_t\le  t\Theta_g(1)$ and  we have,
\begin{equation*}
\exp\left( g(F_{t\wedge \tau_R \wedge \tau}(x))\right)\le \exp\big(g(x)+\Theta_g(1)t\big)\exp\left(
N_{t\wedge \tau_R \wedge \tau}-\frac{1}{2}\langle N \rangle_{t\wedge \tau_R\wedge \tau}\right),
\end{equation*}
where $\tau_R:=\inf\{t\ge 0;\ |F_t(x)-x|>R\}$.
Since $\{X_k\}_{k=0}^{\infty}$ are locally bounded, 
$\exp\big( N_{t\wedge \tau_R \wedge \tau}-
\frac{1}{2}\langle N \rangle_{t\wedge \tau_R\wedge \tau}\big)$ is a martingale for each $R>0$, we take expectations of both sides of
the inequality above and let $R \rightarrow \infty$,
then the required conclusion follows from Fatou's lemma.
\end{proof}



\begin{example}\label{ex4-1}
Suppose that $\{X_k\}_{k=0}^{\infty}$ are locally bounded vector fields. Assume that for every $p>0$ there is $C(p)>0$ such that,
\begin{equation}\label{ex4-1-1}
\sum_{k=1}^m p|X_k(x)|^2+\langle x, X_0(x)\rangle\le C(p)(1+|x|^2).
\end{equation}
We apply Lemma \ref{lem2} to $g(x)=\log(1+|x|^2)$. Since \begin{equation*}
\Theta_g(\lambda)
\le C(\lambda)\sup_{x \in \R^d} {1\over 1+|x|^2} \left (p(\lambda)
\sum_{k=1}^m |X_k(x)|^2+\langle x, X_0(x)\rangle\right)<\infty,
\end{equation*}
 we have for every $p>0$, $R>0$,
 \begin{equation*}
\E\big(|F_{t \wedge \tau_R}(x)|^{p}\big) \le C(p) \e^{C(p)t}(|x|^{p}+1)
\end{equation*}
for some constant $C(p)>0$ independent of $R$. Therefore let $R \to \infty$, we obtain,
\begin{equation}\label{e2}
\E\big(|F_t(x)|^{p}\big) \le C(p) \e^{C(p)t}(|x|^{p}+1).
\end{equation}
In particular, SDE (\ref{sde}) is complete if (\ref{ex4-1-1}) holds.
\end{example}


\begin{lemma}\label{lem3}
Let $F_t(x)$ be a  strong solution to (\ref{sde}) with  initial value $x \in \R^d$. Suppose
that the conditions (\ref{c1}), (\ref{c2aa}) and (\ref{ex4-1-1}) hold. Then for every
$p > d+1$, $T>0$ and non-negative measurable function
$f: \R_+ \times \R^d \rightarrow \R_+$, we have
\begin{equation}\label{lem3-1}
\begin{split}
\E \left(\int_0^T f(t,F_t(x))\,dt\right)
\le Q_1(T)Q_2(x) \left(\int_0^T \int_{\R^d} f^{p}(t,y)dy dt \right)^{1\over p},
\end{split}
\end{equation}
where $Q_1:\R_+ \to \R_+$, $Q_2: \R^d \rightarrow \R_+$ are positive Borel measurable functions which
only depend on $d$, $p$ and the constants in (\ref{c1}), (\ref{c2aa}) and (\ref{ex4-1-1}), such that
$\sup_{T \in [0,\tilde T_0]}Q_1(T)<\infty$ and $\sup_{x \in K}Q_2(x)<\infty$
for every $\tilde T_0>0$ and compact set $K \subseteq \R^d$.
\end{lemma}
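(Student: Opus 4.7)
My approach is to apply Lemma \ref{lem1} with $q = p > d+1$, combined with the ellipticity bound (\ref{c1}), the growth estimate (\ref{c2aa}), and the moment control (\ref{e2}) from Example \ref{ex4-1}, whose hypotheses hold thanks to (\ref{c2aa}) and (\ref{ex4-1-1}).

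First, I would substitute $\tilde f(t, y) := f(t, y+x)$ into Lemma \ref{lem1}, so that $\tilde f(t, \tilde F_t(x)) = f(t, F_t(x))$, and change variables $z = y+x$ on the right-hand side. For every $R>0$, this yields
\[ \E \int_0^{T \wedge \tau_R(x)} f(t, F_t)(\det A(F_t))^{1/p}\,dt \le C(d)\,e^T (\mathbf A(R)+\mathbf B(R)^2)^{d/(2p)} \left(\int_0^T \int_{\R^d} f^p(t,y)\,dy\,dt\right)^{1/p}. \]
Using (\ref{c2aa}) together with the moment estimate (\ref{e2}), both $\mathbf A(R)$ and $\mathbf B(R)$ are bounded uniformly in $R$ by a function of $T$ and a polynomial in $|x|$. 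Since (\ref{ex4-1-1}) guarantees non-explosion via Example \ref{ex4-1}, $\tau_R(x) \uparrow \infty$ almost surely, so monotone convergence (valid because $f \ge 0$) gives
\[ \E \int_0^T f(t, F_t)(\det A(F_t))^{1/p}\,dt \le Q_1(T)\, Q_2(x)\, \|f\|_{L^p([0,T] \times \R^d)}. \]

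The main task is then to remove the factor $(\det A(F_t))^{1/p}$ from the left-hand side. By (\ref{c1}), $(\det A(y))^{1/p} \ge C_1^{d/p}(1+|y|^{p_1})^{-d/p}$, so substituting $f = h \cdot (\det A)^{-1/p}$ in the previous display yields the weighted estimate
\[ \E \int_0^T h(t, F_t)\,dt \le Q_1(T)\, Q_2(x) \left(\int_0^T \int_{\R^d} h^p(t, y)(1+|y|^{p_1})^d\,dy\,dt\right)^{1/p} \]
for every $h \ge 0$. To convert this into the unweighted inequality (\ref{lem3-1}), I would split $h = h\cdot 1_{\{|y|\le R_0\}} + h \cdot 1_{\{|y|>R_0\}}$ for some $R_0 = R_0(|x|, T)$ to be chosen: the local piece is controlled using the weighted estimate, with the weight dominated by $(1+R_0^{p_1})^{d/p}$, while the tail $\E \int_0^T h(t, F_t)\cdot 1_{\{|F_t|>R_0\}}\,dt$ is controlled via the polynomial decay of $\p(|F_t|>R_0)$ that follows from (\ref{e2}) and Chebyshev's inequality.

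The hard part will be balancing these two contributions in a way that is independent of $h$. My plan is to first reduce to bounded $h$ through the monotone truncation $h \wedge n$, choose $R_0$ depending only on $|x|$ and $T$ so that the tail contribution can be absorbed into the local one, and finally pass to the limit $n \to \infty$ by monotone convergence with constants uniform in $n$. The resulting estimate would have $Q_2(x)$ polynomial in $|x|$ (hence bounded on compact sets) and $Q_1(T)$ locally bounded in $T$, matching the requirements of the statement.
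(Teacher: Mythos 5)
Your first step is fine and matches the paper: apply Lemma \ref{lem1} after the translation $\tilde f(t,y)=f(t,y+x)$, control $\mathbf{A}(R)$, $\mathbf{B}(R)$ uniformly in $R$ via (\ref{c2aa}) and the moment bound (\ref{e2}), and use non-explosion from Example \ref{ex4-1} to let $R\to\infty$. The gap is in your scheme for removing the factor $(\det A(F_t))^{1/p}$. Substituting $f=h\,(\det A)^{-1/p}$ moves the degeneracy onto the Lebesgue side as the unbounded spatial weight $(1+|y|^{p_1})^{d}$, and from that point the radius splitting cannot close. For the tail term $\E\int_0^T h(t,F_t)\1_{\{|F_t|>R_0\}}\,dt$ you only propose Chebyshev on $\p(|F_t|>R_0)$ together with the truncation $h\wedge n$; but the resulting bound is of order $n\,T\,\p(|F_t|>R_0)$, which is linear in the truncation level $n$ while $\|h\wedge n\|_{L^p}$ need not grow with $n$ (take $h=n\1_{E}$ with $E$ small), so no choice of $R_0=R_0(|x|,T)$ gives a bound uniform in $n$, and the limit $n\to\infty$ cannot be taken. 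Absorption into the local piece also fails: if $h$ is supported entirely in $\{|y|>R_0\}$ (say concentrated near $|y|=R$ with $R\to\infty$), the ``tail'' is the whole quantity, and your weighted estimate only bounds it by $\|h\|_{L^p}$ times the unbounded weight factor $(1+R^{p_1})^{d/p}$. The estimate (\ref{lem3-1}) is nevertheless true for such $h$ precisely because the smallness of the occupation of far-out regions (moments of $F_t$) compensates the degenerating ellipticity there; once the weight has been transferred to the $dy$-integral, that compensation is lost, so the approach as written would not succeed.

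The paper removes the determinant differently, and this is the idea you are missing: keep the compensating factor inside the expectation along the flow. Apply Lemma \ref{lem1} with $q=d+1$ (not $q=p$), and use H\"older's inequality in the expectation with exponent $\alpha=\frac{p}{d+1}>1$,
\begin{equation*}
\E\!\int_0^{T} \tilde f(t,\tilde F_t)\,dt
\le \Big(\E\!\int_0^{T}\big(\det A(F_t)\big)^{\frac{1}{d+1}}\tilde f^{\alpha}(t,\tilde F_t)\,dt\Big)^{\frac1\alpha}
\Big(\E\!\int_0^{T}\big(\det A(F_t)\big)^{-\frac{1}{(d+1)(\alpha-1)}}\,dt\Big)^{\frac{\alpha-1}{\alpha}},
\end{equation*}
so that the first factor is handled by Lemma \ref{lem1} (note $\|\tilde f^{\alpha}\|_{L^{d+1}}^{1/\alpha}=\|f\|_{L^p}$), while the second factor involves $(\det A(F_t))^{-1/((d+1)(\alpha-1))}$, which by the ellipticity condition (\ref{c1}) is bounded by a polynomial in $|F_t|$ and hence has finite expectation by (\ref{e2}), with constants polynomial in $|x|$ and locally bounded in $T$. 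In other words, the room between $p$ and $d+1$ is spent paying for the determinant through moments of the solution rather than through a spatial weight on $f$; if you rework your argument along these lines it closes, whereas the splitting-by-radius step does not.
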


\begin{proof}
 Let $\tilde f(t,y):=f(t,y+x),\ y \in \R^d$, $\tilde F_t(x):=F_t(x)-x$ and $\alpha:=\frac{p}{ d+1}$.
 Note that from Example \ref{ex4-1}, we know the solution
$F_t(x)$ is non-explode, then applying
H\"older's inequality with exponent $\alpha>1$ and
Lemma \ref{lem1} with $q=d+1$,
and letting $R \to \infty$ in (\ref{lem1-0}), by Fatou lemma
we have
\begin{equation}\label{lem3-2}
\begin{split}
&\E\left( \int_0^{T } f(t,F_t(x))dt\right)=
\E\left( \int_0^{T } \tilde f(t,\tilde F_t(x))dt\right)\\
&\le  \left(\E \left(\int_0^{T} \left(\det A(F_t(x))\right)^{\frac{1}{d+1}}
\tilde f^{\alpha}(t,\tilde F_t(x))dt\right) \right)^{\frac{1}{\alpha}}\\
&\cdot \left(\E \left (\int_0^{T }\big(\det A(F_t(x))
\big)^{-\frac{1}{(d+1)(\alpha-1)}}dt\right)\right)^{\frac{\alpha-1}{\alpha}}\\
&\le (C(d)e^T) ^{1\over \alpha} \sup_{R>0}\big(\mathbf{A}(R)+\mathbf{B}(R)^2\big)^{\frac{d}{2(d+1)\alpha}}
\left( \int_0^T\int_{\R^d} |f|^{p}(t,y) dy dt\right)^{1\over p}\\
&\cdot
\left(\E \left (\int_0^{T }\big(\det A(F_t(x))
\big)^{-\frac{1}{(d+1)(\alpha-1)}}dt\right)\right)^{\frac{\alpha-1}{\alpha}},
\end{split}
\end{equation}
where we use the translation invariant property for the Lebesgue integral, i.e.
$\int_{\R^d} |\tilde f(t,y)|^p$ $ dy=\int_{\R^d} |f(t,y)|^p dy$, and
the constant $\mathbf{A}(R)$, $\mathbf{B}(R)$
are defined by (\ref{lem1-1}).

Since (\ref{ex4-1-1}) holds, by Example \ref{ex4-1} we know that the moment estimate (\ref{e2}) is true. From (\ref{c2aa})
we have the following estimate,
\begin{equation}\label{lem3-3}
\begin{split}
&\sup_{R>0}\mathbf{B}(R)\le
\E\left(\int_0^{T} |X_0(F_t(x))| dt\right) \\
&\le C\E\left(\int_0^{T} (1+|F_t(x)|^{p_2})dt\right)\le  C \e^{CT}T(1+|x|^{p_2}).
\end{split}
\end{equation}
For ${\rm tr}(A)={\rm tr} (X^*X)$, we apply again (\ref{c2aa}) and (\ref{e2}) to obtain
 \begin{equation}\label{lem3-4}
\begin{split}
\sup_{R>0}\mathbf{A}(R) \le \E\left(\int_0^{T} \text{tr} A(F_t(x))dt\right)\le
C\e^{CT}T(1+|x|^{2p_2}).
\end{split}
\end{equation}
Similarly, by the ellipticity condition (\ref{c1}), ${\det (A(x))}^{-\frac{1}{(d+1)(\alpha-1)}}$ $\le
C(1+|x|^{\frac{d p_1}{(d+1)(\alpha-1)}})$, and we have,
 \begin{equation}\label{lem3-5}
\begin{split}
\E \left (\int_0^{T }\big(\det A(F_t(x))
\big)^{-\frac{1}{(d+1)(\alpha-1)}}dt\right)
\le C\e^{CT}T(1+|x|^{\frac{d p_1}{(d+1)(\alpha-1)}}).
\end{split}
\end{equation}
In particular, it is easy to check that all the constants $C$ above only depend on
the constants in (\ref{c1}), (\ref{c2aa}) and (\ref{ex4-1-1}).

Putting the estimates (\ref{lem3-3})-(\ref{lem3-5}) into (\ref{lem3-2}), we can show
(\ref{lem3-1}) with
\begin{equation*}
\begin{split}Q_1(T)&=\e^{C(1+T)}T^{\alpha-1\over \alpha}(T+T^2)^{d\over 2(d+1)\alpha}
,\ Q_2(x)= 1+|x|^{d(p_1+p_2)\over (d+1)\alpha}.
\end{split}
\end{equation*}
\end{proof}

\section{Construction of the approximation vector fields}\label{sec-construction}

We will construct a class of approximation SDEs with smooth and elliptic coefficients for (\ref{sde1}).
Let $\eta:\R^d\rightarrow \R_{+}$  be the smooth mollifier defined by
$\eta(x)=C\e^{\frac{1}{|x|^2-1}}\1_{\{ |x|<1\}}$,
where $C$ is a normalizing constant such that $\int_{\R^d} \eta(x)dx=1$.
For every $\ee>0$, set $\eta_{\varepsilon}(x):=\varepsilon^{-d}\eta(\frac{x}{\varepsilon})$.
For $f\in L^1_{\loc}(\R^d)$, we let $f*\eta_{\ee}$ denote the  convolution of $f$ with $\eta_{\ee}$,
\begin{equation*}
f*\eta_{\ee}(x):=\int_{\R^d}\eta_\ee(x-y)f(y)dy=\int_{|y-x|\le \ee}\eta_\ee(x-y)f(y)dy,\ \ \ x \in \R^d.
\end{equation*}

It is natural to approximate each $X_k$ by $C^\infty$ smooth vector field $X_k*\eta_\ee$.
However, since we do not make the assumption  that $X_k$  are bounded, the approximating systems
$\{X_k*\eta_\ee\}_{k=1}^m$ may loose ellipticity if $\ee$ is small enough.

Suppose that Assumption \ref{assumption1} holds, in particular, the condition that
$X_k \in W_{loc}^{1,p_3}(\R^d;\R^d)$, $1 \le k$ $ \le m$ and
$X_0 \in W_{loc}^{1,p_4}(\R^d;\R^d)$ for some constants $p_3>2(d+1)$, $p_4>d+1$ ensures
that $X_k$, $0 \le k \le m$ are continuous.
Then for every  $R\ge R_1+1$ we may define the truncated vector field $\tilde X_{k,R}$ as following,
\begin{equation}\label{lem6-1}
\tilde X_{k,R}\big((\rho,\theta)\big):=
\begin{cases}
X_k(0),&\text{if}\; \rho=0,\\
X_k\big((\rho,\theta)\big), \quad &\text{if}\  0<|\rho|\leqslant R, \\
            X_k\big((R,\theta)\big), &\text{if}\ |\rho|> R,
            
\end{cases}
\end{equation}
where $R_1$ is the constant in Assumption \ref{assumption1} (4),
$(\rho,\theta) \in \R_+\times \mathbb{S}^{d-1}$ denotes the spherical
coordinate in $\R^d$, $0$ denotes the origin of $\R^d$.
We first state a
technical lemma for $\{\tilde X_{k,R}\}_{k=0}^{\infty}$.
\begin{lemma}\label{lem6}
If  Assumption \ref{assumption1}  holds for $\{X_k\}_{k=0}^m$, then for every
$R $ $\ge R_1+1$,  Assumption \ref{assumption1}
holds for $\{\tilde X_{k,R}\}_{k=0}^m$ with the corresponding constants independent of $R$.
\end{lemma}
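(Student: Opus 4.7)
The plan is to check each of the four clauses of Assumption~\ref{assumption1} for $\{\tilde X_{k,R}\}_{k=0}^m$, verifying that the associated constants can be chosen independent of $R$. First I would partition $\R^d$ into the interior $\{|x|\le R\}$, where $\tilde X_{k,R}\equiv X_k$ and every condition is inherited with the original constants, and the exterior $\{|x|>R\}$, where $\tilde X_{k,R}(x)=X_k(R\hat x)$ with $\hat x=x/|x|$. Since $|R\hat x|=R\ge R_1+1>R_1$, all pointwise estimates of Assumption~\ref{assumption1} are available at the boundary point $R\hat x$. A chain-rule computation gives $D\tilde X_{k,R}(x)=(R/|x|)\,DX_k(R\hat x)\,P$ for $|x|>R$, where $P=I-\hat x\hat x^T$ is the orthogonal projection onto $\hat x^\perp$.

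The clauses (\ref{c1}), (\ref{c2aa}) and the derivative-growth part of (\ref{c4}) follow by elementary monotonicity using $R<|x|$: $\sum\tilde a_{ij}(x)\xi_i\xi_j\ge C_1(1+|x|^{p_1})^{-1}|\xi|^2$, $|\tilde X_{k,R}(x)|\le C_2(1+|x|^{p_2})$, and $|D\tilde X_{k,R}(x)|\le (R/|x|)|DX_k(R\hat x)|\le C_3(1+|x|^{p_5})$. The Sobolev regularity in (\ref{c3}) is immediate because $\tilde X_{k,R}$ is continuous across $\{|x|=R\}$ with piecewise classical derivative serving as weak derivative. The exponential integrability in (\ref{c3}) follows by splitting the integral: on $\{|x|\le R\}$ it is inherited with the original $\kappa(p)$, while on $\{R<|x|\le R'\}$ the pointwise polynomial bound on $D\tilde X_{k,R}$ makes $\tilde K_p$ bounded so the exponential integrand is bounded on a set of finite measure.

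For the one-sided drift condition (\ref{c2}), I would take $\tilde\delta=\delta/2$, fix a center $x$ with $|y|\le\tilde\delta$, and set $\hat x'=(x+y)/|x+y|$. If $|x+y|\le R$, the original (\ref{c2}) at $x$ applies directly. If $|x+y|>R$ and $|x|\le R$, a short geometric estimate yields $|R\hat x'-x|\le 2\tilde\delta\le\delta$, so the original (\ref{c2}) at $x$ with perturbation $R\hat x'-x$ gives the bound. If $|x+y|>R$ and $|x|>R$, apply the original (\ref{c2}) at $R\hat x$ with perturbation $R\hat x'-R\hat x$ of norm at most $2\tilde\delta\le\delta$; this gives $A+B\le C(p)(1+R^2)$ for $A:=\sum_k p|X_k(R\hat x')|^2\ge 0$ and $B:=\langle R\hat x,X_0(R\hat x')\rangle$. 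Using $\langle x,X_0(R\hat x')\rangle=(|x|/R)B$ together with $|x|/R\ge 1$, $R\ge 1$ and a case split on the sign of $B$ (employing the elementary inequality $A+\lambda B\le\lambda(A+B)$ for $A\ge 0$, $\lambda\ge 1$), one deduces $A+(|x|/R)B\le 2C(p)(1+|x|^2)$, with constant independent of $R$.

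The hard part will be the logarithmic bound (\ref{c4aa}) on $\tilde K_p$ in the exterior. Writing $r=R/|x|\in(0,1]$ and decomposing a unit vector $\xi=s\hat x+t\eta_0$ with $s^2+t^2=1$ and $\eta_0$ a unit vector in $\hat x^\perp$, one obtains for $|x|>R$
\begin{equation*}
\tilde H_p(x)(\xi,\xi)=2pr\bigl(t^2\,\eta_0^{\,T}DX_0(R\hat x)\eta_0+st\,\hat x^{\,T}DX_0(R\hat x)\eta_0\bigr)+(2p-1)pr^2 t^2\sum_k|DX_k(R\hat x)\eta_0|^2.
\end{equation*}
The purely tangential piece $2prt^2\eta_0^{\,T}DX_0\eta_0+(2p-1)pr^2 t^2\sum|DX_k\eta_0|^2$ is bounded by $rK_p(R\hat x)\le C(p)\log(1+R^2)\le C(p)\log(1+|x|^2)$, using $r^2\le r$ and the non-negativity of the $DX_k$ sum. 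The main obstacle is the cross term $2prst\,\hat x^{\,T}DX_0(R\hat x)\eta_0$, since a priori only the polynomial bound $|DX_0(R\hat x)|\le C_3(1+R^{p_5})$ from (\ref{c4}) is at hand. My plan is to use a Young estimate $2prst|\hat x^{\,T}DX_0\eta_0|\le pr\mu t^2+pr|\hat x^{\,T}DX_0\eta_0|^2/\mu$ with $\mu$ chosen proportional to $\log(1+|x|^2)$, and to exploit the decay $r=R/|x|\le 1$ to absorb the residue into a $C(p)\log(1+|x|^2)$ bound whose constant depends only on the data of Assumption~\ref{assumption1}. This balancing step is the only point where ensuring uniformity in $R$ is nontrivial, and it is where the bulk of the bookkeeping in a detailed proof would sit.
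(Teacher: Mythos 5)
Most of your outline tracks the paper's actual proof: the interior/exterior split with $\tilde X_{k,R}(x)=X_k(R\hat x)$ outside, the monotonicity arguments for (\ref{c1}), (\ref{c2aa}) and the growth part of (\ref{c4}), the halving of $\delta$ together with the projection estimate $|R\hat x'-R\hat x|\le\delta$ for (\ref{c2}) (your sign-split via $A+\lambda B\le \lambda(A+B)$ is a slightly cleaner variant of the paper's device of factoring out $|x|/R$ and replacing $p$ by $2p$), the continuity-across-$S_R$ argument for weak differentiability (which the paper carries out via two integrations by parts after proving $\tilde X_{k,R}\in W^{1,\infty}(B_R^c)$), and the splitting of the exponential integral in (\ref{c3}) (the paper uses the log bound on the annulus where you use the cruder polynomial bound; both give a bound independent of $R$ for a fixed ball).

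The genuine gap is exactly at the step you yourself single out as the crux: the $R$-independent logarithmic bound (\ref{c4aa}) for $\tilde K_{p,R}$ on $\{|x|>R\}$ is never established, and the mechanism you propose cannot establish it. The cross term $2p\,st\,r\,\langle DX_0(\pi_R(x))\eta_0,\hat x\rangle$ is controlled by the hypotheses only through the polynomial bound (\ref{c4}), $|DX_0(\pi_R(x))|\le C_3(1+R^{p_5})$, and $r=R/|x|$ is close to $1$ just outside $S_R$; hence a Young estimate with $\mu\asymp\log(1+|x|^2)$ leaves a residue of order $(1+R^{2p_5})/\log(1+R^2)$, which near $|x|\approx R$ is polynomial in $|x|$, not logarithmic, and there is no guaranteed negative term to absorb it: $K_p$ in (\ref{kp}) gives only a one-sided bound on the symmetric part of $DX_0$ and no control whatsoever of its antisymmetric part, so $\langle DX_0(\pi_R(x))\eta_0,\hat x\rangle$ can be as large as $|DX_0(\pi_R(x))|$ while $K_p\le C(p)\log(1+|x|^2)$ holds. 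So the balancing step fails and the key estimate of the lemma is missing from your proposal. For comparison, the paper does not balance at all here: using (\ref{lem6-4a})--(\ref{lem6-4}) it asserts $\tilde K_{p,R}(x)=\max\{0,\sup_{\xi\in T_{\theta(x)}S_{|x|},\,|\xi|=1}H_p(\pi_R(x))(\xi,\xi)\}$ and concludes via $r\le1$ and (\ref{c4aa}) at $\pi_R(x)$; that reduction simply discards the radial--tangential cross term you identify (the radial derivatives of all $\tilde X_{k,R}$ vanish, but the radial component of $D\tilde X_{0,R}(x)(\eta_0)$ need not), so you cannot close your write-up by quoting the paper's display either — either you justify that block-diagonal reduction, or you find a genuinely different way to handle this term.
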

\begin{proof}
Since the right hand side of (\ref{c1}-\ref{c2aa}) depends only on $|x|$,
 it is clear from the definition (\ref{lem6-1}) that they hold true for
$\{\tilde X_{k,R}\}_{k=0}^m$ with the same constants $C_1$, $C_2$.

Suppose that (\ref{c2}) holds with a constant $0<\delta\le 1$. 
For every $x,y \in \R^d$ such that $|y|\le \frac{\delta}{2}$,
 if $|x+y|\le R$,  then $\tilde X_{k,R}(x+y)=X_k(x+y)$ by definition,
 so according to (\ref{c2}) we have
\begin{equation}\label{lem6-0aa}
\begin{split}
&\sup_{\{y \in \R^d;\ |y|\le\frac{\delta}{2},|y+x|\le R\}}
\left(p \sum_{k=1}^m|\tilde X_{k,R}(x+y)|^2+ \langle x, \tilde X_{0,R}(x+y)\rangle\right)\\
&\le \sup_{|z|\le \delta}
\left(p \sum_{k=1}^m|X_{k}(x+z)|^2+ \langle x, X_{0}(x+z)\rangle\right)
\le C(p)(1+|x|^2).
\end{split}
\end{equation}
Let
\begin{equation*}
\begin{split}
& B_R=\{x \in \R^d;\ |x|<R\},
\ \ \ \ \ S_R=\{x \in \R^d; \ |x|=R\}.
\end{split}
\end{equation*}
For every $z \in \R^d$ such that $z \notin 0$, we denote the spherical coordinate of
$z$ by $(|z|, \theta(z))$ with $\theta(z) \in \mathbb{S}^{d-1}$. And for every
$z \in \R^d$,
we define $\pi_R:\R^d\to S_R$ to be the the shortest distance projection, i.e., $\pi_R(z):=(R, \theta(z))$.

If $|x+y|>R$, then by definition $\tilde X_{k,R}(x+y)=$ $X_k(\pi_R(x+y))
=X_k\big((R, \theta(x+y))\big)$, and we obtain
\begin{equation}\label{lem6-1a}
\begin{split}
&
p \sum_{k=1}^m|\tilde X_{k,R}(x+y)|^2+ \langle x, \tilde X_{0,R}(x+y)\rangle\\
&=p
\sum_{k=1}^m \left|  X_{k}\Big(\pi_R(x+y)\Big)\right|^2+{|x|\over R}
\Big \langle \pi_R(x), X_0(\pi_R(x+y))\Big \rangle.
\end{split}
\end{equation}
For  $\theta_1, \theta_2 \in \mathbb S^{d-1}$ we define
$\tilde g(\theta_1, \theta_2):=\big \langle (1,\theta_1),  (1,\theta_2)\big \rangle $,
to be the Euclidean inner product of the corresponding points in $S_1$.
Hence  for every $z_1, z_2 \in \R^d$ such that $z_1, z_2 \notin 0$, $\<z_1,z_2\>=$
$\<(|z_1|, \theta(z_1)), (|z_2|, $ $\theta(z_2))\>$ $=|z_1| |z_2| \tilde g(\theta(z_1),$ $ \theta(z_2))$.
When $|x+y| \ge R>2$ and $|y|\le \frac{\delta}{2}$, then $|x|>R-1$ and we have
\begin{equation}\label{lem6-0a}
\begin{split}
&\frac{\delta^2}{4} \ge |x+y-x|^2=|x|^2+|x+y|^2-2\<x+y,x\>\\
& =|x|^2+|x+y|^2-2|x||x+y|\tilde g\Big(\theta(x), \theta(x+y)\Big)\\
& \ge 2(R-1)^2-2(R-1)^2\tilde g\Big(\theta(x), \theta(x+y)\Big) \\
&=\Big|\frac{R-1}{R}\Big|^2\left|(R,\theta(x))-(R,\theta(x+y))\right|^2 \ge \frac{1}{4}\left|\pi_R(x)-\pi_R(x+y)\right|^2,
\end{split}
\end{equation}
where the second inequality above holds since $|x+y|>R-1$, $|x|>R-1$ and $\Big|g\Big(\theta(x), \theta(x+y)\Big)\Big|\le 1$.
(\ref{lem6-0a}) proves that   $\big|\pi_R(x)-\pi_R(x+y)\Big|\le \delta$ for every $|y|\le {\delta \over 2}$.
To the right hand of   (\ref{lem6-1a}) we apply (\ref{c2}) for the system  $\{X_k\}_{k=0}^m$ at the point $\pi_R(x)$ to obtain that
\begin{equation*}\label{lem6-0}
\begin{split}
&\sup_{\{y \in \R^d;\ |y|\le\frac{\delta}{2},|y+x|> R\}} \left(p \sum_{k=1}^m|\tilde X_{k,R}(x+y)|^2
+ \langle x, \tilde X_{0,R}(x+y)\rangle\right)\\
&=
{|x|\over R} \sup_{\{y \in \R^d;\ |y|\le\frac{\delta}{2},|y+x|> R\}} \Bigg(
p\frac{R}{|x|}
\sum_{k=1}^m \left|  X_{k}\Big(\pi_R(x+y)\Big)\right|^2\\
&+
\Big \langle \pi_R(x), X_0(\pi_R(x+y))\Big \rangle\Bigg)\\
&\le  {|x|\over R} \sup_{|z|\le \delta}   \left(2p
\sum_{k=1}^m \left|  X_{k}\Big(\pi_R(x)+z\Big)\right|^2+
\Big \langle \pi_R(x), X_0(\pi_R(x)+z)\Big \rangle\right)\\
&\le {C(2p)|x|\over R} (1+R^2)
\le C C(2p)(1+|x|^2).
\end{split}
\end{equation*}
Note that $|x|>R-1$, here the first inequality is due to the property ${|x|\over R}\ge \frac{1}{2}$
and the last step is due to the property  $\frac{1+R^2}{R}\ge C(1+|x|)$.
Together with (\ref{lem6-0aa}), this shows that (\ref{c2}) holds with the corresponding constant $\delta$ replaced by $\frac{\delta}{2}$.


Now we move on to item (3) of Assumption \ref{assumption1} and prove first that there is a constant
 $p_3>2(d+1)$, such that
$X_{k,R} \in W_{\loc}^{1,p_3}(\R^d; \R^d)$ for $1\le k \le m$.

Since $\tilde X_{k,R}(x)=X_k(x)$ for every $|x|\le R$,
$\tilde X_{k,R} \in W^{1,p_3}(B_R;\R^d)$ by Assumption \ref{assumption1} (3), also note that
the boundary $\partial B_R$ is $C^1$ and $X_k$ is continuous,
we may apply the integration by parts formula  to $\tilde X_{k,R}$, therefore for
every $\psi \in C_0^{\infty}(\R^d)$ and $1\le i \le d$,
\begin{equation}\label{lem6-2}
\int_{B_R} D_i X_{k} (x)\psi (x)dx=-\int_{B_R} \tilde X_{k,R} (x) D_i \psi (x)dx+
\int_{S_R} \tilde X_{k,R} \psi \nu_i dS,
\end{equation}
where $D_i \psi=\partial_{x_i}\psi$, $\nu=(\nu_1, \dots \nu_d)$ denotes the outward normal vector field on
$S_R$,  $dS$ denotes integration with respect to the area measure on $S_R$.

By (\ref{c4}), $DX_k$ is locally bounded on the complement $B_{R_1}^c$ of $B_{R_1}$, hence
$X_k $ is  locally Lipschitz continuous on $B_{R_1}^c$ and belongs to $ W_{\loc}^{1,\infty}(B_{R_1}^c;\R^d)$,
see e.g.  \cite[Theorem 4  in Section 5.8.2]{Evans}).
For every $x=(|x|,\theta(x))$ and $y=(|y|,\theta(y))$  with   $ R\le |x| \le |y| $
and $\theta(x), \theta(y) \in \mathbb{S}^{d-1}$, we have,
\begin{equation*}
\begin{split}
& |\tilde X_{k,R}(x)-\tilde X_{k,R}(y)|= \big|X_k\big(\pi_R(x)\big)-X_k\big(\pi_R(y)\big)\big|\\
&\le C_3 (1+|R|^{p_5})\big|\pi_R(x)-\pi_R(y)\big|
\le CC_3(1+|R|^{p_5})|x-y|
\end{split}
\end{equation*}
where first inequality is due to the Lipschitz continuity of $X_k$ on
$S_R$ and (\ref{c4}), and the second inequality is by (\ref{lem6-0a}).
We conclude that the truncated vector field $\tilde X_{k,R}$ is globally Lipschitz continuous on $B_R^c$, and
$\tilde X_{k,R} \in W^{1,\infty}(B_R^c; $ $ \R^d)$.
Applying again integration by parts formula to $\tilde X_{k,R}$, for every $\psi \in C_0^{\infty}(\R^d)$,
\begin{equation}\label{lem6-3}
\int_{B_R^c} D_i\tilde X_{k,R} (x)\psi(x) dx
=-\int_{B_R^c} \tilde X_{k,R} (x)D_i \psi (x)dx
-\int_{S_R} \tilde X_{k,R} \psi \nu_i dS,
\end{equation}
where we use the property that the outward normal vector on
$\partial B_R^c$ is $-\nu$.  From  (\ref{lem6-2}) and  (\ref{lem6-3}) we see that
for every $\psi \in C_0^{\infty}(\R^d)$ and $1\le i \le d$,
\begin{equation*}
\begin{split}
&\int_{\R^d} \big(D_iX_k(x)\1_{\{x \in B_R\}}+ D_i \tilde X_{k,R}(x)\1_{\{x \in B_R^c\}}\big) \psi(x)
dx\\
&=-\int_{\R^d} \tilde X_{k,R} (x)D_i \psi (x)dx,
\end{split}
\end{equation*}
which means that $\tilde X_{k,R}$ is weakly differentiable with the differential $D\tilde X_{k,R}$, and
$D\tilde X_{k,R}(x)=DX_k(x)\1_{\{x \in B_R\}}+ D \tilde X_{k,R}(x)\1_{\{x \in B_R^c\}}$, then we conclude
$\tilde X_{k,R} \in W^{1,p_3}_{\loc}(\R^d;\R^d)$ from the fact that $X_k \in W^{1,p_3}(B_R;\R^d)$ and
$\tilde X_{k,R} \in W^{1,\infty}(B_R^c;\R^d)$. As the same way we can show
$\tilde X_{0,R} \in W^{1,p_4}_{\loc}(\R^d;\R^d)$.

Let $\nu(\theta)$  be the unit outward normal vector of $S_R$ at the point $(R, \theta)$,
by the definition  of $\tilde X_{k,R}$,
for almost every $x=(|x|,\theta(x))\in \R^d$ with $|x|>R$ we obtain,
\begin{equation}\label{lem6-4a}
 D\tilde X_{k,R}(x)\big(\nu(\theta(x))\big)=0. \ \
\end{equation}
Let $T_{\theta}S_R$ be the tangent space to the sphere $S_R$ at the point $(R, \theta)$, since
$\tilde X_{k,R}$ is Lipschitz continuous on $S_R$, by Rademacher's theorem the derivative
$D\tilde X_{k,R}$ in the directions of $T_{\theta}S_R$  is almost everywhere well defined with respect to the area measure
on $S_R$.
For every $\xi \in T_{\theta(x)}S_{|x|}$, by a standard isomorphism, we can also assume
$\xi \in T_{\theta(x)}S_{R}$. And  by definition (\ref{lem6-1}), for
almost every $x=(|x|,\theta(x))\in \R^d$ with $|x|>R$ and every
$\xi \in T_{\theta(x)}S_{|x|}$,
\begin{equation}\label{lem6-4}
D\tilde X_{k,R}(x)(\xi)=\frac{R}{|x|}DX_{k}\big(\pi_R(x)\big)(\xi).
\end{equation}


For every $p>1$, let $\tilde K_{p,R}(x):=\sup_{|\xi|=1}\tilde H_{p,R}(x)\big(\xi,\xi\big)$, where
\begin{equation}\label{lem6-5a}
\begin{split}
\tilde H_{p,R}(x)\big(\xi,\xi\big)&=2p\langle D \tilde X_{0,R}(x)(\xi), \xi\rangle
+(2p-1)p\sum_{k=1}^{m}|D \tilde X_{k,R}(x)(\xi)|^2.\end{split}
\end{equation}


By (\ref{lem6-4a}), for  almost every $x=(|x|,\theta(x)) \in \R^d$ with $|x|>R\ge R_1+1$,
$\tilde H_{p,R}(x)\big(\nu(\theta(x)),$ $\nu(\theta(x))\big)=0$, so by
(\ref{lem6-4}) and (\ref{c4aa}), we have
\begin{equation*}
\begin{split}
& \tilde K_{p,R}(x)=\max\Big\{0,
\sup_{\xi \in T_{\theta(x)}S_{|x|}, |\xi|=1} H_p\big(\pi_R(x)\big)
\big(\xi,\xi\big) \Big\}  \le  0 \vee  K_p\big(\pi_R(x)\big)\\
&\le C(p)\log (1+|R|^2) \le C(p)\log (1+|x|^2).
\end{split}
\end{equation*}
On the other hand, it is obvious that $\tilde K_{p,R}(x)=K_p(x)$ for almost every
$x \in \R^d$ with $|x|<R$.
So we obtain that (\ref{c4aa}) holds for $\tilde X_{k,R}$ with the same constants $C(p)$ and $R_1$ as that for
$X_k$.

So for the constant $\kappa(p)$ in (\ref{c3}) and every $\tilde R>0$,
\begin{equation}\label{lem6-5}
\begin{split}
& \sup_{R>R_1}\int_{\{|x|\le \tilde R\}}e^{\kappa(p)\tilde K_{p,R}(x)}dx\\
&\le \sup_{R>R_1} \Big(\int_{\{|x|\le  R_1\}}e^{\kappa(p)K_p(x)}dx+\int_{\{R_1<|x|\le  R\}}e^{\kappa(p)K_p(x)}dx \\
&+\int_{\{R< |x|\le \tilde R\}}e^{ \kappa(p)\left(0 \vee K_p\big(\pi_R(x)\big)\right)}dx\Big)\\
&\le \int_{\{|x|\le  R_1\}}e^{\kappa(p)K_p(x)}+
\int_{\{R_1< |x|\le \tilde R\}}e^{\kappa(p)C(p)\log(1+|x|^2)} dx
<\infty,
\end{split}
\end{equation}
which means (\ref{c3}) is true for $\{\tilde X_{k,R}\}_{k=0}^m$ with the corresponding constants
independent of $R$.
Similarly, we can show (\ref{c4}) holds for $\{\tilde X_{k,R}\}_{k=0}^m$ with the corresponding constants independent of $R$.
\end{proof}

For every $\ee>0$ we define the approximating vector fields $\{X_k^{\ee}\}_{k=0}^m$ by
$X_k^{\ee}:=\tilde X_{k,\ee^{-\lambda}}*\eta_{\ee}$, where the constant $\lambda>0$ will be chosen  later in Lemma \ref{lem7}.
Since for every $\ee>0$, $\tilde X_{k,\ee^{-\lambda}}$ is bounded, it is obvious that
$X_k^{\ee} \in C_b^{\infty}(\R^d;\R^d)$.  Following result concerns about the properties of
$\{X_k^{\ee}\}_{k=0}^m$ which are uniformly for $\ee$.


\begin{lemma}\label{lem7}
Suppose Assumption \ref{assumption1} holds. There exist $\lambda_0>0$, $\ee_0>0$, such that
if we define $X_k^{\ee}:=\tilde X_{k,\ee^{-\lambda_0}}*\eta_{\ee}$,
then for every
$\ee \in (0,\ee_0)$, (\ref{c1}), (\ref{c2aa}), (\ref{c3})-(\ref{c4aa}) hold for
$\{X_k^\ee\}_{k=0}^m$ with the corresponding constants independent of $\ee$. Furthermore,
for every $p>0$, there exists a $C(p)>0$, such that
\begin{equation}\label{lem7-0}
\sup_{\ee \in (0,\ee_0)}\Big(\sum_{k=1}^m p|X_k^\ee (x)|^2+\<x, X_0^\ee(x)\>\Big)\le C(p)(1+|x|^2).
\end{equation}
\end{lemma}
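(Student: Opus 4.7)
The starting point is Lemma \ref{lem6}, which already gives all the desired bounds for the truncated fields $\tilde X_{k,R}$ with constants independent of $R$. It therefore suffices to verify that the additional convolution with $\eta_\ee$ preserves each bound uniformly in $\ee\in(0,\ee_0)$, provided $\lambda_0$ and $\ee_0$ are chosen small enough (in particular $\ee_0<\delta/4$ with $\delta$ from (\ref{c2})). Each $X_k^\ee$ is already in $C_b^\infty(\R^d;\R^d)$, so only the quantitative constants need tracking.

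The polynomial growth conditions (\ref{c2aa}), (\ref{c4}) and the uniform moment bound (\ref{lem7-0}) are immediate from Jensen's inequality and the fact that $\eta_\ee$ is a probability density supported in $B_\ee$. For (\ref{lem7-0}), convexity of $|\cdot|^2$ yields
\begin{equation*}
\sum_{k=1}^m p|X_k^\ee(x)|^2+\<x,X_0^\ee(x)\>\;\le\;\int\Big(\sum_{k=1}^m p|\tilde X_{k,R}(z)|^2+\<x,\tilde X_{0,R}(z)\>\Big)\eta_\ee(x-z)\,dz,
\end{equation*}
and since $|z-x|\le\ee<\delta/2$ the strengthened version of (\ref{c2}) for $\tilde X_{k,R}$ established inside Lemma \ref{lem6} bounds the integrand pointwise by $C(p)(1+|x|^2)$. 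The bounds (\ref{c2aa}) and (\ref{c4}) follow analogously from $|X_k^\ee|\le|\tilde X_{k,R}|*\eta_\ee$ and $DX_k^\ee=D\tilde X_{k,R}*\eta_\ee$ together with the $R$-uniform polynomial bounds on $\tilde X_{k,R}$ and $D\tilde X_{k,R}$.

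Conditions (\ref{c3}) and (\ref{c4aa}) on $K_p^\ee$ come from two successive applications of Jensen. Since the drift part of $H_p$ is linear in $DX_0$ and the diffusion part is convex in $DX_k$, for every unit $\xi$ I get $H_p^\ee(x)(\xi,\xi)\le\int\tilde H_{p,R}(y)(\xi,\xi)\,\eta_\ee(x-y)\,dy\le(\tilde K_{p,R}*\eta_\ee)(x)$, so that $K_p^\ee(x)\le(\tilde K_{p,R}*\eta_\ee)(x)$. A second Jensen applied to the convex function $e^{\kappa(p)t}$ gives $e^{\kappa(p)K_p^\ee}\le e^{\kappa(p)\tilde K_{p,R}}*\eta_\ee$; then Fubini reduces the integrability (\ref{c3}) on $\{|x|\le\tilde R\}$ to the corresponding bound (\ref{lem6-5}) for $\tilde K_{p,R}$ on $\{|y|\le\tilde R+\ee\}$, which is uniform in $R$. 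The logarithmic estimate (\ref{c4aa}) transfers the same way, using $\tilde K_{p,R}(y)\le C(p)\log(1+|y|^2)$ for $|y|>R_1$ and $|y|\le|x|+1$ on the support of $\eta_\ee(x-\cdot)$.

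The main obstacle is ellipticity (\ref{c1}), because convolution goes in the wrong direction: $|X_k^\ee(x)\xi|^2\le(|\tilde X_{k,R}(\cdot)\xi|^2*\eta_\ee)(x)$. The key observation is that $p_3>2(d+1)>d$ and $p_4>d+1>d$, so Sobolev--Morrey embedding puts $X_k\in C^{0,\alpha}_{\loc}$ with $\alpha=1-d/p_3>0$, and the growth conditions (\ref{c2aa}), (\ref{c4}) force the corresponding Hölder seminorm on $B_{|x|+1}$ to grow only polynomially in $|x|$. Writing $X_k^\ee(x)=\tilde X_{k,R}(x)+r_k^\ee(x)$ with $|r_k^\ee(x)|\le\sup_{|y-x|\le\ee}|\tilde X_{k,R}(y)-\tilde X_{k,R}(x)|$, I estimate the modulus of continuity on $B_R$ via the Sobolev embedding and on $B_R^c$ via the global Lipschitz bound $CR^{p_5}$ obtained inside the proof of Lemma \ref{lem6}. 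In both regimes $|r_k^\ee(x)|\le C\ee^{\alpha'}(1+|x|^A)$ for fixed $\alpha',A>0$ when $R=\ee^{-\lambda_0}$. The elementary inequality $|a+b|^2\ge\tfrac12|a|^2-|b|^2$ applied componentwise, combined with the $R$-uniform ellipticity of $\tilde X_{k,R}$ from Lemma \ref{lem6}, then yields (\ref{c1}) for $X_k^\ee$ with a possibly larger exponent $p_1'\ge p_1$, provided $\lambda_0$ is small enough to absorb the error $r_k^\ee$ into the half-ellipticity on both $\{|x|\le R\}$ and $\{|x|>R\}$. Pinning down this quantitative trade-off is the only delicate step, and it fixes the choice of $\lambda_0$.
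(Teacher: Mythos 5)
Your proposal is correct and follows essentially the same route as the paper: reduce everything to the $R$-uniform bounds of Lemma \ref{lem6}, transfer the growth conditions and the exponential integrability of $K_p$ through the mollification by Jensen's inequality (twice, once for $H_p$ and once for $e^{\kappa(p)\,\cdot}$), and preserve ellipticity by bounding the convolution error via the Sobolev-embedding H\"older modulus on a fixed ball together with the Lipschitz bound of order $R^{p_5}$ outside, finally choosing $\lambda_0$ small so that the error is absorbed into half of the $\ee^{\lambda_0 p_1}$-floor on both $\{|x|\le \ee^{-\lambda_0}\}$ and its complement. The only (harmless) deviation is that you absorb the error at the level of the vector fields via $|a+b|^2\ge \tfrac12|a|^2-|b|^2$, while the paper estimates the difference of the matrix entries $a^{\ee}_{i,j}-\tilde a^{\ee}_{i,j}$ (which additionally uses $|X_k^\ee|\le C(1+\ee^{-\lambda_0 p_2})$); both yield (\ref{c1}) with $\ee$-independent constants, and indeed with the original exponent $p_1$, so your hedge about a larger $p_1'$ is unnecessary.
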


\begin{proof}
In the proof we fix a $\lambda>0$ which will be determined later, we set
$\ee_1(\lambda):=\min((R_1+2)^{-\frac{1}{\lambda}}, {\delta \over 4})$.


Since
$\ee_1^{-\lambda}\ge R_1+2$, from  Lemma \ref{lem6} we have,
\begin{equation*}
\sup_{\ee \in (0,\ee_1)}\sup_{|y|\le \frac{\delta}{2}}\Big(p\sum_{k=1}^m |\tilde X_{k,\ee^{-\lambda}}(x+y)|^2
+\langle x,  \tilde X_{0,\ee^{-\lambda}}(x+y)\rangle \Big)
\le C(p)(1+|x|^2).
\end{equation*}
For every  $\ee <\ee_1<\frac{\delta}{2}$, we apply this to $X_{k}^{\ee}(x)=\int_{|y| \le \ee}
\tilde X_{k,\ee^{-\lambda}}(x-y)\eta_{\ee}(y)dy$
and by Jensen's inequality we obtain
\begin{equation}\label{lem7-0a}
\begin{split}
&\sup_{\ee \in (0,\ee_1)}\Big(p\sum_{k=1}^m |X_{k}^\ee(x)|^2
+\langle x,  X_{0}^\ee(x)\rangle \Big)\\
&\le \sup_{\ee \in (0,\ee_1)}\Big(p\sum_{k=1}^m
\int_{|y| \le \frac{\delta}{2}} |\tilde X_{k,\ee^{-\lambda}}(x-y)|^2\eta_{\ee}(y)dy\\
&+
\int_{|y| \le \frac{\delta}{2}} \big\langle x, \tilde X_{0,\ee^{-\lambda}}(x-y)\big\rangle\eta_{\ee}(y)dy \Big)
\le C(p)(1+|x|^2),
\end{split}
\end{equation}
which means (\ref{lem7-0}) holds. Similarly, we can show (\ref{c2aa}) holds for
$\{X_k^{\ee}\}_{k=0}^m$ with
the corresponding constants independent of $\ee$.

Let $K_p^{\ee}(x):=\sup_{|\xi|=1}H_p^{\ee}(x)\big(\xi,\xi\big)$ where
\begin{equation}\label{lem7-1a}
H_{p}^{\ee}(x)\big(\xi,\xi\big):=2p\langle D X_0^{\ee}(x)(\xi), \xi\rangle
+(2p-1)p\sum_{k=1}^{m}|D X_k^{\ee}(x)(\xi)|^2.
\end{equation}
The local integrability (\ref{c3}) is trivial for the smooth functions
$X_k^\epsilon$. Now we try to give an uniform bounds for $\ee$. As the same argument for
(\ref{lem7-0a}), according to Jensen's inequality
we have $K^{\ee}_p \le \tilde K_{p,\ee^{-\lambda}}$ $*\eta_{\ee}$, where
$\tilde K_{p,\ee^{-\lambda}}$ is defined by (\ref{lem6-5a}).
Letting $\kappa(p)$ be the constant in (\ref{c3}),  by Jensen's inequality and (\ref{lem6-5}), for every $p>1$,
$R>0$,
\begin{equation}\label{lem7-1b}
\begin{split}
&\sup_{\ee \in (0,\ee_1)}
\int_{\{|x|\le  R\}}\exp\big(\kappa(p) K_p^{\ee}(x)\big)dx\\
&\le
\sup_{\ee \in (0,\ee_1)}
\int_{\{|x|\le  R\}}\exp\Big(\kappa(p) \tilde K_{p,\ee^{-\lambda}}*\eta_\ee(x)\Big)dx\\
&\le \sup_{\ee \in (0,\ee_1)}
\int_{\{|x|\le R\}}\Big(\exp\big(\kappa(p) \tilde K_{p,\ee^{-\lambda}}\big)*\eta_{\ee}(x)\Big)dx\\
&\le
 \sup_{\ee \in (0,\ee_1)}\int_{\{|x|\le  R+1\}} \exp \left(\kappa(p) \tilde K_{p, \ee^{-\lambda}}(x)\right)\;dx<\infty.
\end{split}
\end{equation}
Hence (\ref{c3}) holds
for $\{X_k^\ee\}_{k=0}^m$ with the corresponding constants independent of $\ee$.
As the similar way, we can check (\ref{c4}) and (\ref{c4aa}) hold for
$\{X_k^\ee\}_{k=0}^m$ with the corresponding constants independent of $\ee$.

Finally we study the ellipticity condition (\ref{c1}).
By (\ref{lem6-4a}) and (\ref{lem6-4}), for every $\ee \in (0,\ee_1)$, $1 \le k \le m$,
\begin{equation*}
\begin{split}
& \sup_{|y|\ge R_1}|D\tilde X_{k,\ee^{-\lambda}}(y)| \le
\sup_{R_1 \le |y|\le \ee^{-\lambda}}|DX_k(y)|\le C(1+\ee^{-\lambda p_5}).
\end{split}
\end{equation*}
Therefore we have,
 \begin{equation}\label{lem7-2a}
\begin{split}
&\big|\tilde X_{k,\ee^{-\lambda}}(x)-\tilde X_{k,\ee^{-\lambda}}(y)\big|\le
C(1+\ee^{-\lambda p_5})|x-y|,\ \ x,y \in B_{R_1}^c.
\end{split}
\end{equation}
On the other hand, by (\ref{lem6-1}), for every $\ee \in (0,\ee_1)$ and $x \in \R^d$ with $|x|\le R_1+2\le \ee^{-\lambda}$,
we know that  $\tilde X_{k,\ee^{-\lambda}} (x)=X_k(x)$. Since $X_k \in W^{1,p_3}_{\loc}(\R^d;\R^d)$ for some constant
$p_3>2(d+1)$, according to the Sobolev embedding lemma we have,
\begin{equation}\label{lem7-3a}
\begin{split}
\sup_{\ee \in (0,\ee_1)}\big|\tilde X_{k,\ee^{-\lambda}}(x)-\tilde X_{k,\ee^{-\lambda}}(y)\big|
\le C|x-y|^{\iota}, \ x,y \in B_{R_1+2}\\
\end{split}
\end{equation}
for some constant $\iota \in (0,1)$, which is independent of $\ee$.
Then by (\ref{lem7-2a}) and (\ref{lem7-3a}), for every $\ee \in (0,\ee_1)$,
\begin{equation}\label{lem7-2}
\begin{split}
&\big|X_k^{\ee}(x)-X_{k,\ee^{-\lambda}}(x)\big|\
\le \int_{|y|\le \ee} \big|\tilde X_{k,\ee^{-\lambda}}(x+y)-\tilde X_{k,\ee^{-\lambda}}(x)\big|
\eta_{\ee}(y)dy\\
&\le C\ee^{\iota}\1_{\{|x|\le R_1+1\}}+C(1+\ee^{-\lambda p_5})\ee \1_{\{|x|>R_1+1\}}.
\end{split}
\end{equation}
We write the components of $X_k^\ee$ as $X_k^\ee=(X_{k1}^\ee,\cdots, X_{kd}^\ee)$ and
for every $1\le i,$ $j \le d$ we define
\begin{equation*}
\begin{split}
a_{i,j}^\ee(x):=\sum_{k=1}^m X_{ki}^\ee(x)
X_{kj}^\ee(x),\ \ \tilde a_{i,j}^{\ee}(x):=\sum_{k=1}^m \tilde X_{ki,\ee^{-\lambda}}(x)
\tilde X_{kj,\ee^{-\lambda}}(x).
\end{split}
\end{equation*}
By (\ref{c2aa}) and definition (\ref{lem6-1}), for every $\ee \in (0,\ee_1)$ and $x \in \R^d$,
\begin{equation*}
\begin{split}
&|\tilde X_{k,\ee^{-\lambda}}(x)|\le \sup_{|x|\le \ee^{-\lambda}}|X_k(x)|\le C(1+\ee^{-\lambda p_2}),
\end{split}
\end{equation*}
therefore we have
\begin{equation*}
\begin{split}
&|X_k^{\ee}(x)|\le \int_{|y|\le \ee}|\tilde X_{k,\ee^{-\lambda}}(x+y)|\eta_{\ee}(y)dy\le  C(1+\ee^{-\lambda p_2}).
\end{split}
\end{equation*}
Combing this with (\ref{lem7-2}) we get
\begin{equation}\label{lem7-4}
\begin{split}
& |a_{i,j}^{\ee}(x)-\tilde a_{i,j}^{\ee}(x)|\\
&\le C\sup_{1\le k \le m}
\big|X_{k}^{\ee}(x)-\tilde X_{k,\ee^{-\lambda}}(x)\big|
\big(|X_{k}^{\ee}(x)|+|\tilde X_{k,\ee^{-\lambda}}(x)|\big)\\
&\le C\ee^{\iota-\lambda p_2}\1_{\{|x|\le R_1+1\}}+C
\ee^{1-\lambda(p_2+p_5)}\1_{\{|x|>R_1+1\}}.
\end{split}
\end{equation}
By definition (\ref{lem6-1}), and ellipticity condition (\ref{c1}),
for every $\ee \in (0,\ee_1(\lambda))$ and $\xi=(\xi_1,\cdots, \xi_d) \in \R^d$ with $|\xi|=1$,
\begin{equation}\label{lem7-4a}
\sum_{i,j=1}^d\tilde a_{i,j}^{\ee}(x)\xi_i\xi_j
\ge \frac{C}{1+|x|^{p_1}}\1_{\{|x|\le \ee^{-\lambda}\}}+{C \over 2}\ee^{\lambda p_1}\1_{\{|x|> \ee^{-\lambda}\}}.
\end{equation}
We will prove below that the error made by convolution does not affect the ellipticity of $\{a_{i,j}^{\ee}\}$.
In fact, according to (\ref{lem7-4}) and (\ref{lem7-4a}),
\begin{equation}\label{lem7-3}
\begin{split}
& \sum_{i,j=1}^d a_{i,j}^{\ee}(x)\xi_i\xi_j\\
&\ge \sum_{i,j=1}^d \tilde a_{i,j}^{\ee}(x)\xi_i\xi_j-d^2\max_i |\xi_i|^2
\sup_{\ee \in (0,\ee_1)}\sup_{i,j}|a_{i,j}^{\ee}(x)-\tilde a_{i,j}^{\ee}(x)|\\
&\ge C\left(\frac{\1_{\{|x|\le \ee^{-\lambda}\}}}{1+|x|^{p_1}}-\ee^{\iota-\lambda p_2}
\1_{\{|x|\le R_1+1\}}-\ee^{1-\lambda(p_2+p_5)}\1_{\{R_1+1<|x|\le \ee^{-\lambda}\}}\right)\\
&+C\left(\ee^{\lambda p_1}-\ee^{1-\lambda(p_2+p_5)}
\right)\1_{\{|x|> \ee^{-\lambda}\}}.
\end{split}
\end{equation}

We choose a constant $\lambda_0>0$ small enough satisfying
$\lambda_0 p_1< \iota-\lambda_0 p_2$ and $\lambda_0 p_1<1-\lambda_0(p_2+p_5)$.
Hence for such $\lambda_0$, there exists a positive constant $\ee_0(\lambda_0)<\ee_1(\lambda_0)$, such that for
every $\ee \in (0,\ee_0)$,
\begin{equation*}
\begin{split}
&\ee^{\iota-\lambda_0 p_2} \le \frac{\ee^{\lambda_0 p_1}}{4 (1+\ee^{\lambda_0 p_1})},\ \ \
\ee^{1-\lambda_0(p_2+p_5)} \le \frac{\ee^{\lambda_0 p_1}}{4(1+\ee^{\lambda_0 p_1})}
\le \frac{\ee^{\lambda_0 p_1}}{4}.
\end{split}
\end{equation*}
So for every $\ee \in (0,\ee_0)$, $x \in \R^d$ with $|x|\le \ee^{-\lambda_0}$,
\begin{equation*}
\begin{split}
&\frac{ 1}{1+|x|^{p_1}}-\ee^{\kappa-\lambda_0 p_2}
\1_{\{|x|\le R_1+1\}}-\ee^{1-\lambda_0(p_2+p_5)}\1_{\{R_1+1<|x|\le \ee^{-\lambda_0}\}}\\
&\ge \frac{ 1}{1+|x|^{p_1}}- \frac{\ee^{\lambda_0 p_1}}{2(1+\ee^{\lambda_0 p_1})}\ge
\frac{ 1}{2(1+|x|^{p_1})}.
\end{split}
\end{equation*}
Now we fix the constant $\lambda_0$ and $\ee_0(\lambda_0)$ obtained above,
putting above estimates together into (\ref{lem7-3}), we have for every $\ee \in (0, \ee_0)$,
\begin{equation*}
\begin{split}
& \sum_{i,j=1}^d a_{i,j}^{\ee}(x)\xi_i\xi_j
\ge \frac{C}{2(1+|x|^{p_1})}\1_{\{|x|\le \ee^{-\lambda_0}\}}+
{C\over 2}|\ee|^{\lambda_0 p_1} \1_{\{|x|> \ee^{-\lambda_0}\}}\\
&\ge
\frac{C}{2} \left(\frac {1}{1+|x|^{p_1}}\right),
\end{split}
\end{equation*}
which means (\ref{c3}) holds for $\{X_k^\ee\}_{k=1}^m$, $\ee \in (0,\ee_0)$
with the corresponding constants independent of $\ee$.
\end{proof}

From now on  we take the constants $\lambda_0$ and $\ee_0$ to be that
obtained in Lemma \ref{lem7}, and for every $\ee \in (0,\ee_0)$, we define $X_k^\ee(x):=X_{k,\ee^{-\lambda_0}}* \eta_{\ee}$.
\begin{lemma}\label{lem8}
Suppose that Assumption \ref{assumption1} holds. For every $R>0$ and $p>1$,
\begin{equation}\label{lem8-1}
\begin{split}
& \lim_{\ee \to 0}\int_{\{|x|\le R\}}|X_k^{\ee}(x)-X_k(x)|^{p}dx=0,\ \ 0 \le k \le m,
 \end{split}
\end{equation}
\begin{equation}\label{lem8-2}
\lim_{\ee \to 0}\int_{\{|x|\le R\}}|DX_k^{\ee}(x)-DX_k(x)|^{p_3}dx=0,\ \ 1 \le k \le m,
\end{equation}
\begin{equation}\label{lem8-3}
\lim_{\ee \to 0}\int_{\{|x|\le R\}}|DX_0^{\ee}(x)-DX_0(x)|^{p_4}dx=0,
\end{equation}
where $p_3>2(d+1)$, $p_4>d+1$ are the constants in (3) of Assumption \ref{assumption1}.
\end{lemma}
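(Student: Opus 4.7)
The plan is to reduce the lemma to the standard convergence theorem for mollifiers by observing that, on any fixed ball, the truncation $\tilde X_{k,\ee^{-\lambda_0}}$ coincides with $X_k$ once $\ee$ is small enough. Fix $R>0$. Since $\eta_{\ee}$ is supported in $\{|y|\le \ee\}$, for every $x$ with $|x|\le R$ and every $y$ in the support of $\eta_{\ee}$ we have $|x-y|\le R+\ee$. Because $\lambda_0>0$, there exists $\ee^*(R)>0$ such that $\ee^{-\lambda_0}>R+\ee+1$ for all $\ee\in(0,\ee^*)$. Consequently, by definition (\ref{lem6-1}), $\tilde X_{k,\ee^{-\lambda_0}}(x-y)=X_k(x-y)$ for all such $(x,y)$, and hence
\begin{equation*}
X_k^{\ee}(x)=\int_{|y|\le\ee} X_k(x-y)\eta_{\ee}(y)\,dy=(X_k*\eta_{\ee})(x),\qquad \forall\ |x|\le R+1.
\end{equation*}

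For (\ref{lem8-1}), I would invoke (\ref{c2aa}) to conclude that $X_k$ is locally bounded and in particular belongs to $L^{p}_{\loc}(\R^d;\R^d)$ for every $p\ge 1$. The classical mollifier convergence theorem then yields $X_k*\eta_{\ee}\to X_k$ in $L^{p}(\{|x|\le R\};\R^d)$, and combined with the identification above this proves (\ref{lem8-1}).

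For (\ref{lem8-2}) and (\ref{lem8-3}), part (3) of Assumption \ref{assumption1} gives $X_k\in W^{1,p_3}_{\loc}(\R^d;\R^d)$ for $1\le k\le m$ and $X_0\in W^{1,p_4}_{\loc}(\R^d;\R^d)$. Since weak derivatives commute with convolution against a smooth mollifier, the identification $X_k^{\ee}=X_k*\eta_{\ee}$ on the slightly enlarged ball $\{|x|\le R+1\}$ gives
\begin{equation*}
DX_k^{\ee}(x)=D(X_k*\eta_{\ee})(x)=(DX_k*\eta_{\ee})(x),\qquad \forall\ |x|\le R,
\end{equation*}
for all $\ee\in(0,\ee^*)$. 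Because $DX_k\in L^{p_3}_{\loc}$ (respectively $DX_0\in L^{p_4}_{\loc}$), the standard mollifier convergence delivers $DX_k*\eta_{\ee}\to DX_k$ in $L^{p_3}(\{|x|\le R\})$ and $DX_0*\eta_{\ee}\to DX_0$ in $L^{p_4}(\{|x|\le R\})$, finishing (\ref{lem8-2}) and (\ref{lem8-3}).

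I expect no substantive obstacle: the whole content is the bookkeeping observation that the truncation radius $\ee^{-\lambda_0}$ grows to infinity as $\ee\to 0$, which makes the truncation invisible on any prescribed compact set once $\ee$ is small, after which the result is a direct application of the well-known $L^{p}_{\loc}$-approximation property of Friedrichs mollifiers applied to $X_k$, $DX_k$, and $DX_0$.
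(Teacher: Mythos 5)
Your proposal is correct and follows essentially the same route as the paper: since $\ee^{-\lambda_0}\to\infty$, the truncated field $\tilde X_{k,\ee^{-\lambda_0}}$ agrees with $X_k$ on a slightly enlarged ball for small $\ee$, so $X_k^{\ee}=X_k*\eta_{\ee}$ and $DX_k^{\ee}=DX_k*\eta_{\ee}$ there, and the conclusions follow from the standard $L^p_{\loc}$ mollifier convergence together with local boundedness of $X_k$ and the Sobolev hypotheses on $DX_k$, $DX_0$. Your only addition is the slightly more careful bookkeeping with the mollifier support radius, which changes nothing of substance.
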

\begin{proof}
For every fixed $R>0$ and every $\ee$ small enough such that
$\ee^{-\lambda}>R+1$, by definition (\ref{lem6-1}) we have $\tilde X_{k,\ee^{-\lambda}}(x)=X_k(x)$
for all $x \in \R^d$ with $|x|\le R+1$. Therefore for every $x \in \R^d$ with $|x|\le R$,
\begin{equation*}
DX_k^{\ee}(x)=D \tilde X_{k,\ee^{-\lambda}}*\eta_{\ee}(x)=D X_{k}*\eta_{\ee}(x),
\end{equation*}
Hence (\ref{lem8-2}) holds since $X_k \in W^{1,p_3}_{\loc}(\R^d;\R^d)$, $1\le k \le m$. As the same way we can show
(\ref{lem8-3}).

Since the $\{X_k\}_{k=0}^m$ are locally bounded by part (2) of Assumption \ref{assumption1}, similarly we can prove
(\ref{lem8-1}) for any $p>1$.


\end{proof}

\section{The derivative flow equation}\label{sec-derivative-flow}

Through this section, let $X_k^{\ee} \in C_b^{\infty}(\R^d;\R^d)$, $0 \le k \le m$, $\ee \in (0,\ee_0)$
be the vector fields constructed in Lemma \ref{lem7}, we consider the following approximating
SDE for (\ref{sde1}),
 \begin{equation}
 \begin{cases}
\label{approximated-sde}
&dx_t^\ee=\sum_{k=1}^m   X_k^\ee(x_t^\ee)dW_t^k+ X_0^\ee(x_t^\ee)dt,\\
&dv_t^\ee=\sum_{k=1}^m D X_k^\ee(x_t^\ee)(v_t^\ee)dW_t^k
+D X_0^\ee(x_t^\ee)(v_t^\ee)dt.
\end{cases}
\end{equation}
We denote the strong solution to (\ref{approximated-sde}) with initial point $(x,v)\in \R^{2d}$ by
$(F_t^\ee(x),$ $ V_t^\ee(x,v))$.

According to  Lemma \ref{lem7}, $\{X_k^\ee\}_{k=0}^m$ satisfies (\ref{c1}), (\ref{c2aa}) and (\ref{ex4-1-1}) with corresponding constants independent of $\ee$,
by a straightforward application of Lemma \ref{lem3} to $F_t^\ee(x)$, we obtain the following lemma, which will be frequently used in this section.


\begin{lemma}\label{lem3+lem7}
Suppose that Assumption (\ref{assumption1}) holds, then for
every
$p > d+1$, $T>0$, compact set $K \subseteq \R^d$,  and non-negative measurable function
$f: \R_+ \times \R^d \rightarrow \R$, we have,
\begin{equation}\label{lem3-1-2}
\begin{split}
\sup_{\ee \in (0,\ee_0)}\sup_{x \in K}\E \left(\int_0^T f(t,F_t^{\ee}(x))\,dt\right)
\le C(K)Q(T)\left(\int_0^T \int f^{p}(t,y)dy dt \right)^{1\over p},
\end{split}
\end{equation}
where $Q:\R_+ \to \R_+$ is a positive Borel measurable function such that
$\sup_{T \in [0,\tilde T_0]}$ $Q_1(T)<\infty$ for every
$\tilde T_0>0$ and $C(K)$ is a positive constant which may depend on $K$.
\end{lemma}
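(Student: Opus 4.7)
The plan is to apply Lemma \ref{lem3} directly to each approximating solution $F_t^\ee(x)$ and then observe that the estimate is uniform in $\ee$ because the relevant constants are uniform.

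First I would recall what Lemma \ref{lem3} requires: the coefficients need to satisfy (\ref{c1}), (\ref{c2aa}), and (\ref{ex4-1-1}), and the resulting functions $Q_1,Q_2$ depend only on the dimension $d$, the exponent $p$, and the constants appearing in these three conditions. By Lemma \ref{lem7}, the approximating vector fields $\{X_k^\ee\}_{k=0}^m$ satisfy (\ref{c1}) and (\ref{c2aa}) with constants independent of $\ee$, and (\ref{lem7-0}) of Lemma \ref{lem7} is precisely (\ref{ex4-1-1}) with a uniform constant $C(p)$. Consequently, all three hypotheses of Lemma \ref{lem3} hold for the SDE driving $F_t^\ee(x)$ with constants that do not depend on $\ee \in (0,\ee_0)$.

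Next I would apply Lemma \ref{lem3} to $F_t^\ee(x)$ for each $\ee \in (0,\ee_0)$. This yields, for every $p > d+1$, $T>0$, $x\in \R^d$ and non-negative measurable $f$,
\begin{equation*}
\E \left(\int_0^T f(t,F_t^\ee(x))\,dt\right)
\le Q_1(T)\,Q_2(x) \left(\int_0^T \int_{\R^d} f^{p}(t,y)\,dy\,dt \right)^{1/p},
\end{equation*}
with $Q_1$ and $Q_2$ as in Lemma \ref{lem3}. Since the constants in Lemma \ref{lem3} depend only on $d$, $p$, and the constants in (\ref{c1}), (\ref{c2aa}), (\ref{ex4-1-1}), and these are $\ee$-uniform by Lemma \ref{lem7}, the functions $Q_1$ and $Q_2$ may be chosen to be the same for every $\ee \in (0,\ee_0)$.

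Finally, I would take the supremum over $x \in K$ and over $\ee \in (0,\ee_0)$. The function $Q_2$ is locally bounded (in fact it is a polynomial in $|x|$ by the explicit form given at the end of the proof of Lemma \ref{lem3}), so $C(K) := \sup_{x\in K} Q_2(x) < \infty$, and the choice $Q(T) := Q_1(T)$ fulfils $\sup_{T \in [0,\tilde T_0]} Q(T) < \infty$ for every $\tilde T_0 > 0$. This gives (\ref{lem3-1-2}), completing the argument. The proof is essentially a bookkeeping exercise: the only thing to verify carefully is that Lemma \ref{lem7} does indeed produce $\ee$-uniform constants in the three conditions that feed into Lemma \ref{lem3}, and this is already established.
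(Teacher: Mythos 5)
Your proposal is correct and follows exactly the paper's route: the paper obtains this lemma as a direct application of Lemma \ref{lem3} to $F_t^\ee(x)$, using Lemma \ref{lem7} to guarantee that (\ref{c1}), (\ref{c2aa}) and (\ref{ex4-1-1}) hold for $\{X_k^\ee\}_{k=0}^m$ with constants independent of $\ee$, so that $Q_1$ and $Q_2$ are $\ee$-uniform and one may set $C(K)=\sup_{x\in K}Q_2(x)$, $Q(T)=Q_1(T)$. No further comment is needed.
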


In this section, we will prove existence and uniqueness for (\ref{sde1}).
We first give the following lemma about the uniform moment estimate for $V_t^\ee(x,v)$.

\begin{lemma}\label{lem9}
Suppose that Assumption (\ref{assumption1}) holds.
Then for every $p\ge 2$ and compact set
$\tilde K \subseteq \R^{2d}$,
\begin{equation}\label{uniform-moments}
\sup_{\ee \in (0,\ee_0)}\sup_{(x,v)\in  \tilde K}\sup_{t \in [0,T_0(p)]} \E
\left(|V_t^\ee(x,v)|^p\right)<\infty,
\end{equation}
where $T_0(p):={\kappa(p)\over d+2}$ with the constant $\kappa(p)$ in (\ref{c3}).

\end{lemma}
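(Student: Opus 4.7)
The estimate will come from combining an It\^o computation for $|V_t^\ee|^{2p}$ with Krylov's estimate (Lemma \ref{lem3+lem7}). First I would apply It\^o's formula to $|V_t^\ee|^{2p}$ along the derivative equation in (\ref{approximated-sde}). The drift, after using the elementary pointwise inequality $\langle v, DX_k^\ee(x)v\rangle^2 \le |v|^2|DX_k^\ee(x)v|^2$ to absorb the second-order It\^o term, is controlled by $|V_t^\ee|^{2p}K_p^\ee(F_t^\ee)$, where $K_p^\ee$ is defined for $\{X_k^\ee\}_{k=0}^m$ exactly as in (\ref{lem7-1a}):
\[
 d|V_t^\ee|^{2p}\le |V_t^\ee|^{2p}K_p^\ee(F_t^\ee)\,dt+dN_t,\qquad N_t\ \text{local martingale}.
\]
Equivalently, a parallel It\^o computation on $\log|V_t^\ee|^{2p}$ gives the pathwise bound
\[
 |V_t^\ee|^{2p}\le |v|^{2p}\,\mathcal{E}(p\tilde M)_t\,\exp\!\left(\int_0^t K_p^\ee(F_s^\ee)\,ds\right),
\]
where $\mathcal{E}(p\tilde M)_t$ is the Dol\'eans--Dade exponential of a local martingale $\tilde M$ built from the direction process $\tilde V_s^\ee=V_s^\ee/|V_s^\ee|$; it is a nonnegative local martingale, so $\E[\mathcal{E}(p\tilde M)_t]\le 1$ after the standard localisation.

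Taking expectations and applying H\"older to absorb $\mathcal{E}(p\tilde M)_t$ via its $L^1$ bound, the problem reduces to establishing a uniform-in-$\ee$ estimate
\[
 \sup_{\ee\in(0,\ee_0)}\sup_{(x,v)\in\tilde K}\E\!\left[\exp\!\left(c\int_0^t K_p^\ee(F_s^\ee)\,ds\right)\right]<\infty
\]
for some explicit constant $c\ge 1$. For this I would invoke Jensen's inequality, $\exp(c\int_0^t K_p^\ee\,ds)\le t^{-1}\int_0^t\exp(ct K_p^\ee(F_s^\ee))\,ds$, to move $t$ inside the exponential, and then apply Lemma \ref{lem3+lem7} with exponent $q=d+2>d+1$. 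The resulting $L^{d+2}$-integral I would split spatially: over $\{|y|\le R_1\}$, condition (\ref{c3}) (which transfers to $K_p^\ee$ by Lemma \ref{lem7}) yields a finite integral provided $ct(d+2)\le \kappa(p)$; over $\{|y|>R_1\}$, the logarithmic bound (\ref{c4aa}) reduces the integrand to a polynomial in $|y|$, and the polynomial moment estimate for $F_s^\ee$ from Example \ref{ex4-1}, whose hypothesis (\ref{ex4-1-1}) holds uniformly in $\ee$ by Lemma \ref{lem7}, absorbs the tail contribution uniformly in $\ee$ and in $(x,v)\in\tilde K$. Calibrating the constant so that $c=1$ suffices from the H\"older step pins the allowable time at $t\le \kappa(p)/(d+2)=T_0(p)$, proving (\ref{uniform-moments}).

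\textbf{Main obstacle.} The essential technical point is the mismatch between the \emph{local} exponential integrability $\int_{|x|\le R}e^{\kappa(p)K_p}\,dx<\infty$ granted by (\ref{c3}) and the \emph{global} $L^{d+2}(\R^d)$ integrability required for a direct application of Lemma \ref{lem3+lem7}: by (\ref{c4aa}) the integrand $\exp(ct K_p^\ee(y))$ grows polynomially at infinity, so the $\R^d$-integral is in fact infinite. The remedy — a spatial splitting at radius $R_1$ with the polynomial tail bound paid for by the moment control on $F_s^\ee$ — is precisely what couples the exponents $\kappa(p)$ and $d+2$ and accounts for the sharp threshold $T_0(p)=\kappa(p)/(d+2)$. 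A secondary care is that $\mathcal{E}(p\tilde M)_t$ is only a nonnegative local martingale; the H\"older split in the reduction must use only its $L^1$ bound $\E[\mathcal{E}(p\tilde M)_t]\le 1$, since its higher moments are not a priori finite under Assumption \ref{assumption1}.
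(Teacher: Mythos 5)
Your proposal is correct and follows essentially the same route as the paper: the exponential (Dol\'eans--Dade) representation of $|V_t^\ee|$ with the drift absorbed into $K_p^\ee$, a Cauchy--Schwarz/H\"older step using only $\E[\mathcal{E}_t]\le 1$ (which indeed forces $c=1$), Jensen to move $t$ inside the exponential, Krylov's estimate (Lemma \ref{lem3+lem7}) with exponent $d+2$ on the region $\{|F_s^\ee|\le R_1+2\}$ via (\ref{lem7-1b}), and the logarithmic bound (\ref{c4aa}) plus the uniform moment estimate from Example \ref{ex4-1} on the complement, yielding the threshold $T_0(p)=\kappa(p)/(d+2)$. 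The only difference is cosmetic: the paper derives the exponential formula for $|V_t^\ee|^p$ via the linear-SDE solution formula rather than It\^o on $\log|V_t^\ee|^{2p}$, and it splits the expectation spatially before invoking Krylov's estimate, which is the order your ``main obstacle'' paragraph correctly identifies as necessary.
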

\begin{proof}
Given $(x, v) \in  \R^{2d}$ fixed,
we write $(F_t^\ee,V_t^{\ee})$ for $(F_t^\ee(x),V_t^{\ee}(x,v))$ for simplicity.
We first follow some steps in \cite[Theorem 5.1]{Li-flow} (see also \cite{Li-moments}) for the estimation.
Applying It\^o formula to (\ref{approximated-sde}), we derive
\begin{equation}\label{lem9-1}
|V_t^\ee|^p =|v|^p+\sum_{k=1}^m \int_0^t |V_s^\ee|^p dM_s^{\ee}+ \int_0^t |V_s^\ee|^p da_s^{\ee},
\end{equation}
where
\begin{equation}\label{lem9-1a}
\begin{split}
& M_t^\ee:=p\sum_{k=1}^m \int_0^t \frac{\big\langle DX_k^\ee(F_s^\ee)
\big(V_s^\ee\big),V_s^\ee\big\rangle}
{|V_s^\ee|^2}dW_s^k,\ a_t^{\ee}:=\frac{p}{2}\int_0^t
\frac{\bar H_p^\ee(F_s^\ee)\big(V_s^\ee,V_s^\ee\big)}
{|V_s^\ee|^2}ds.
\end{split}
\end{equation}
Here for every $x\in \R^d, \xi \in \R^d$,
\begin{equation*}
\begin{split}
\bar H_p^\ee(x) \big(\xi,\xi\big)=& 2\langle DX_0^\ee(x)(\xi),\xi\rangle\\
&+\sum_{k=1}^m  \Big(|DX_k^\ee(x)(\xi)|^2
+ (p-2)
\frac{\big|\big\langle DX_k^\ee(x)
\big(\xi\big),\xi\big\rangle\big|^2}{|\xi|^2}\Big)
\end{split}
\end{equation*}
with the convention that $\frac{0}{0}=0$.

Furthermore, we know that for every $\R$-valued semi-martingale $N_t$, the unique solution to the linear equation
(in $\R$) $dz_t=z_t d N_t$ will have the expression $z_t=z_0\exp\big(N_t-\frac{\langle N \rangle_t}{2}\big)$, where
$\langle N \rangle_t$ denotes the quadratic variational process for $N_t$, see e.g.
\cite[Proposition 2.3 in Page 361]{Revuz-Yor} or \cite[Theorem 5.1]{Li-flow}. So by (\ref{lem9-1}) we have
\begin{equation}\label{lem9-2a}
|V_t^\ee|^p=|v|^p\exp\Big(M_t^\ee-\frac{\langle M^\ee\rangle_t}{2}+a_t^\ee\Big).
\end{equation}
Since $\tilde M_t^\ee:=\exp(2M_t^\ee-2\langle M^\ee \rangle_t)$ is a super martingale, $\E(\tilde M_t^\ee)\le 1$,
after applying H\"older inequality to (\ref{lem9-2a}) we deduce the following estimate,
\begin{equation}\label{lem9-2}
\begin{split}
 \E\left(|V_t^\ee|^p\right) &\le |v|^p\big(\E \tilde M_t^\ee\big)^{\frac{1}{2}}
\big(\E\left(\exp\big(\langle M^\ee \rangle_t+2a_t^\ee\big) \right)\big)^{\frac{1}{2}}\\
&\le |v|^p\left(\E\left(\exp\big( \int_0^t K_p^\ee(F_s^\ee)ds\big)\right)\right)^{\frac{1}{2}},
\end{split}
\end{equation}
where we use the property that
$\langle M^\ee \rangle_t+2a_t^\ee \le  \int_0^t  K_p^{\ee}(F_s^\ee)ds$
for $K_p^\ee$ defined by (\ref{lem7-1a}).
For every fixed $T>0$ and $t \in (0,T]$, by Jensen's inequality,
\begin{equation*}
\begin{split}
&\E\left(\exp\Big( \int_0^t K_p^\ee(F_s^\ee)ds\Big)\right)
=
\E\left(\exp\left( \int_0^T K_p^\ee(F_s^\ee)\1_{\{s \in (0,t)\}}ds\right)\right)\\
&\le \frac{1}{T}\left(
\E\left(\int_0^t \exp \left(T \; K_p^\ee(F_s^\ee) \right)ds\right)+(T-t)\right)\\
&\le  \frac{1}{T} \E\left(\int_0^t\exp\big(T\; K_p^\ee(F_s^\ee)\big)
\1_{\{|F_s^\ee|\le R_1+2\}}ds\right)\\
& + \frac{1}{T}  \E\left(\int_0^t\exp \left (T K_p^\ee(F_s^\ee)\right)\1_{\{|F_s^\ee|> R_1+2\}}ds\right)+1.
\end{split}
\end{equation*}
Applying Lemma \ref{lem3+lem7} with $p=d+2$, for every compact $K \subseteq \R^d$,
\begin{equation*}
\begin{split}
& \sup_{t \in [0,T]}\sup_{\ee \in (0,\ee_0)}\sup_{x \in K}
\E\left(\int_0^t\exp\Big(T K_p^\ee(F_s^\ee)\Big)
\1_{\{|F_s^\ee|\le R_1+2\}}\;ds\right)\\
&\le C(K,T)\sup_{\ee \in (0,\ee_0)}\left(\int_{\{|x|\le R_1+2\}}
\exp\Big(   T(d+2)\,K_p^\ee(x)\Big)\;dx\right)^{\frac{1}{d+2}}.
\end{split}
\end{equation*}
If   $T= T_0(p):={\kappa(p)\over d+2}$, the above quantity is finite by (\ref{lem7-1b}) in Lemma \ref{lem7}.

Also by Lemma \ref{lem7}, there is a constant $C(p)>0$ independent of $\ee$ such that
for every $x \in \R^d$ with $|x|>R_1+2$,
\begin{equation*}
\sup_{\ee \in (0,\ee_0)}K_p^\ee(x) \le C(p)\log(1+|x|^2).
\end{equation*}
By (\ref{lem7-0}) and Example \ref{ex4-1}, for every $p>0, T>0$ and compact set
$K \subseteq \R^d$,
\begin{equation}\label{lem9-3}
\sup_{\ee \in (0,\ee_0)}\sup_{x \in K}\sup_{t \in [0,T]}
\E\big(|F_t^\ee|^p\big)<\infty,
\end{equation}
therefore we have
\begin{equation*}
\begin{split}
& \sup_{\ee \in (0,\ee_0)}\sup_{x \in K}
\E\left(\int_0^{T_0(p)}\exp\big(T_0(p) K_p^\ee(F_s^\ee)\big)\1_{\{|F_s^\ee|> R_1+2\}}ds\right)\\
&\le \sup_{\ee \in (0,\ee_0)}\sup_{x_0 \in K}
\E\left(\int_0^{T_0(p)}\big(1+|F_s^\ee |^{2C(p)T_0(p)}\big)ds\right)<\infty.
\end{split}
\end{equation*}
We put all the estimates above back into (\ref{lem9-2}) to complete the proof.
\end{proof}

\begin{lemma}\label{lem10}
Suppose that Assumption \ref{assumption1} holds. Then for all $p>1$
and compact set $K \subseteq \R^d$,
\begin{equation}\label{lem10-1}
\begin{split}
&\limsup_{\ee, \tilde \ee \to 0}\sup_{x \in K}\int_0^T
\E\left(|X_k^{\ee}(F_t^\ee(x))-X_k^{\tilde \ee}(F_t^\ee(x))|^{p}\right)dt=0,
\ 0\le k \le m.
\end{split}
\end{equation}

Moreover, there exist constants $\beta_1>0$ and $\beta_2>0$
such that for all $1 \le k \le m$,  $T>0$, and compact subset $K \subseteq \R^d$, the following holds:

\begin{equation}\label{lem10-2}
\begin{split}
& \limsup_{\ee, \tilde \ee \to 0}\sup_{x \in K}\int_0^T
\E\left(|DX_k^{\ee}(F_t^\ee(x))-DX_k^{\tilde \ee}(F_t^\ee(x))|^{2+\beta_1}\right)dt=0,
\end{split}
\end{equation}
\begin{equation}\label{lem10-3}
\begin{split}
& \limsup_{\ee, \tilde \ee \to 0}\sup_{x \in K}\int_0^T
\E\left(|DX_0^{\ee}(F_t^\ee(x))-DX_0^{\tilde \ee}(F_t^\ee(x))|^{1+\beta_2}\right)dt=0.
\end{split}
\end{equation}
\begin{equation}\label{lem10-3a}
\begin{split}
& \sup_{\ee,\tilde \ee \in (0,\ee_0)}\sup_{x \in K}\E\left(\int_0^T \big
|DX_k^\ee\big(F_t^{\tilde \ee}(x)\big)\big|^{2+\beta_1}dt\right)<\infty,\\
& \sup_{\ee,\tilde \ee \in (0,\ee_0)}\sup_{x \in K}\E\left(\int_0^T \big
|DX_0^\ee\big(F_t^{\tilde \ee}(x)\big)\big|^{1+\beta_2}dt\right)<\infty.
\end{split}
\end{equation}
\end{lemma}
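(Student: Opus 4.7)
The plan is to combine three ingredients: \emph{(i)} Lemma \ref{lem3+lem7}, which for any exponent $p>d+1$ estimates $\E\int_0^T g(F_t^\ee(x))\,dt$ by a deterministic $L^p$-norm of $g$, with a constant uniform in $\ee\in(0,\ee_0)$; \emph{(ii)} Lemma \ref{lem8}, which gives $L^p_{\loc}$ convergence of $X_k^\ee$ for every $p>1$, $L^{p_3}_{\loc}$ convergence of $DX_k^\ee$ ($k\ge 1$) and $L^{p_4}_{\loc}$ convergence of $DX_0^\ee$; and \emph{(iii)} the uniform polynomial bounds from Lemma \ref{lem7}, namely $|X_k^\ee(y)|\le C(1+|y|^{p_2})$ globally and $|DX_k^\ee(y)|\le C(1+|y|^{p_5})$ for $|y|>R_1$, paired with the moment estimate (\ref{lem9-3}) to control the region at infinity.

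Since $p_3>2(d+1)$ and $p_4>d+1$, I can choose $\beta_1\in\bigl(0,\tfrac{p_3}{d+1}-2\bigr)$, $\beta_2\in\bigl(0,\tfrac{p_4}{d+1}-1\bigr)$, and $q_1,q_2\in(d+1,\infty)$ with $(2+\beta_1)q_1\le p_3$ and $(1+\beta_2)q_2\le p_4$. For (\ref{lem10-2}), fixing $R>R_1$, I split the expectation into the contributions $I_R^{\le}$ and $I_R^{>}$ from $\{|F_t^\ee(x)|\le R\}$ and $\{|F_t^\ee(x)|>R\}$. Applying Lemma \ref{lem3+lem7} with exponent $q_1$ to $y\mapsto \1_{\{|y|\le R\}}|DX_k^\ee(y)-DX_k^{\tilde\ee}(y)|^{2+\beta_1}$ bounds $I_R^{\le}$ by a multiple of
\[
\Bigl(\int_{|y|\le R}|DX_k^\ee(y)-DX_k^{\tilde\ee}(y)|^{(2+\beta_1)q_1}\,dy\Bigr)^{1/q_1},
\]
and since $(2+\beta_1)q_1\le p_3$, H\"older on the ball together with Lemma \ref{lem8} forces this to vanish as $\ee,\tilde\ee\to 0$. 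For the tail piece, the uniform growth bound yields
\[
I_R^{>}\le C\,\E\int_0^T \1_{\{|F_t^\ee(x)|>R\}}\bigl(1+|F_t^\ee(x)|^{(2+\beta_1)p_5}\bigr)\,dt\le \frac{C}{R}\sup_{\ee,\,x\in K,\,t\le T}\E\bigl(|F_t^\ee(x)|^{(2+\beta_1)p_5+1}\bigr),
\]
which is finite uniformly in $\ee$ by (\ref{lem9-3}). Sending first $\ee,\tilde\ee\to 0$ and then $R\to\infty$ gives (\ref{lem10-2}).

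The estimate (\ref{lem10-3}) is proved identically with $(\beta_1,q_1,p_3)$ replaced by $(\beta_2,q_2,p_4)$, and (\ref{lem10-1}) is easier because $X_k^\ee$ satisfies the uniform polynomial bound $C(1+|y|^{p_2})$ and Lemma \ref{lem8} provides $L^p_{\loc}$ convergence for every $p>1$. For (\ref{lem10-3a}) I drop the difference structure but keep the same splitting: the local piece is controlled by the uniform $L^{p_3}_{\loc}$ (resp.\ $L^{p_4}_{\loc}$) norm of $DX_k^\ee$ coming from Lemma \ref{lem7}, while the tail uses polynomial growth together with (\ref{lem9-3}); Lemma \ref{lem3+lem7} applies to $F_t^{\tilde\ee}$ with constants independent of $\tilde\ee$. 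The main obstacle is exponent bookkeeping: the Krylov-type estimate requires $q>d+1$, which must be reconciled with the Sobolev exponents $p_3,p_4$ of the coefficients. This reconciliation is precisely what the strict inequalities $p_3>2(d+1)$ and $p_4>d+1$ in Assumption \ref{assumption1}(3) accomplish, creating the margins $\beta_1,\beta_2>0$ in the statement.
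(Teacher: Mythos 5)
Your proposal is correct and follows essentially the same route as the paper: split on $\{|F_t^\ee(x)|\le R\}$ versus its complement, handle the local part by the uniform Krylov-type estimate of Lemma \ref{lem3+lem7} combined with the $L^{p_3}_{\loc}$/$L^{p_4}_{\loc}$ convergence of Lemma \ref{lem8} (with $\beta_1,\beta_2$ created exactly from the margins $p_3>2(d+1)$, $p_4>d+1$), and handle the tail by the $\ee$-uniform polynomial growth of the derivatives together with the moment bound (\ref{lem9-3}), letting $\ee,\tilde\ee\to 0$ first and then $R\to\infty$. The only differences are cosmetic (inequality $(2+\beta_1)q_1\le p_3$ plus Hölder on the ball instead of the paper's exact choice $(2+\beta_1)\delta_1=p_3$, and a slightly different Chebyshev bookkeeping in the tail), so this matches the paper's argument.
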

\begin{proof}
Given $x \in  \R^{d}$ fixed,
we write $F_t^\ee$ for $F_t^\ee(x)$ for simplicity.
We only prove (\ref{lem10-2}), the proof for (\ref{lem10-1}), (\ref{lem10-3}) and
(\ref{lem10-3a}) are similar.
 Let $p_3>2(d+1)$ be the constant in Assumption \ref{assumption1}(3). We take a $\delta_1$ $ \in (d+1,{p_3\over 2})$ and
 define $\beta_1:=\frac{p_3}{\delta_1}-2>0$. In particular, we have
$(2+\beta_1)\delta_1=p_3$.

Fix a $R>0$, we apply Lemma \ref{lem3+lem7}
to the function $\Big( |DX_k^{\ee}(F_t^\ee)-$ $DX_k^{\tilde \ee}(F_t^\ee)|^{2+\beta_1}$
$\1_{\{|F_t^\ee|\le R\}}\Big)$, and take  $p=\delta_1$ in  (\ref{lem3-1-2}) to obtain,
\begin{equation}\label{lem10-4}
\begin{split}
&\limsup_{\ee, \tilde \ee \to 0}\sup_{x \in K}\int_0^T\E\left( |DX_k^{\ee}(F_t^\ee)
-DX_k^{\tilde \ee}(F_t^\ee)|^{2+\beta_1}
\1_{\{|F_t^\ee|\le R\}}\right)dt \\
&\le \limsup_{\ee, \tilde \ee \to 0} \;C(K,T)\left(\int_{\{|x|\le R\}}
|DX_k^\ee(x)-DX_k^{\tilde \ee}(x)|^{p_3} \,dx \right)^{1 \over \delta_1}= 0.
\end{split}
\end{equation}
Here in the second step we also use Lemma \ref{lem8}.

By the statement of Lemma \ref{lem7},   (\ref{c4}) in Assumption \ref{assumption1}
 holds for every $\{X_k^\ee\}_{k=0}^m$ with the constants independent of $\ee$.
Thus for sufficiently large $R$ we have
\begin{equation*}
\sup_{\ee \in (0,\ee_0)}|DX_k^{\ee}(x)|\1_{\{|x|>R\}}\le C(1+|x|^{p_5})\1_{\{|x|>R\}}.
\end{equation*}
Then we obtain
\begin{equation}\label{lem10-5}
\begin{split}
&\sup_{\ee, \tilde \ee \in (0,\ee_0)}\sup_{x \in K}
\int_0^T\E\left(|DX_k^{\ee}(F_t^\ee)-DX_k^{\tilde \ee}(F_t^\ee)|^{2+\beta_1}\1_{\{|F_t^\ee|> R\}}\right)
dt\\
&\le 2 C\sup_{\ee\in (0,\ee_0)}\sup_{x \in K} \int_0^T \E\left(\left(1+|F_t^\ee|^{p_5(2+\beta_1)}
\right)\1_{\{|F_t^\ee|>R\}}\right) dt\\
&\le CR^{-p_5(2+\beta_1)} \sup_{\ee\in (0,\ee_0)}\sup_{x \in K} \int_0^T
\E \left(1+|F_t^\ee|^{2p_5(2+\beta_1)}\right)
dt\\
&\le  C(K,T)R^{-p_5(2+\beta_1)}.
\end{split}
\end{equation}
Here in the second step of inequality, we use H\"older inequality and Chebyshev inequality, and the third step is
due to the estimate (\ref{lem9-3}).

In  the inequalities (\ref{lem10-4}-\ref{lem10-5}) we first let $\ee, \tilde \ee \to 0$, then  let $R \to 0$, this gives
  (\ref{lem10-2}).
\end{proof}

We will show the pathwise uniqueness for the solution of (\ref{sde1}).

\begin{proposition}\label{prop1}
Under Assumption \ref{assumption1} pathwise uniqueness holds for the solution to (\ref{sde1}).
\end{proposition}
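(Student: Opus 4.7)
The system (\ref{sde1}) has a triangular structure: the $x$-equation evolves autonomously, and the $v$-equation is linear in $v$ once the $x$-path is fixed. I would therefore argue in two stages. First, pathwise uniqueness for the $x$-component is already supplied by \cite[Theorem 1.3]{Zhang-11} under Assumption \ref{assumption1}(1)+(3), as noted in the discussion following Theorem \ref{th:regularity}; so for any two strong solutions $(x_t^{(i)}, v_t^{(i)})$, $i=1,2$, of (\ref{sde1}) sharing the same Brownian motion and initial data, we may identify $x_t^{(1)} = x_t^{(2)} =: x_t$ almost surely, and it remains to show $v_t^{(1)} = v_t^{(2)}$.

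The difference $w_t := v_t^{(1)} - v_t^{(2)}$ satisfies the linear SDE
$$dw_t = \sum_{k=1}^m DX_k(x_t)(w_t)\, dW_t^k + DX_0(x_t)(w_t)\, dt, \qquad w_0 = 0,$$
with adapted random matrix-valued coefficients. To conclude $w \equiv 0$ it suffices to establish $\int_0^T \bigl(|DX_0(x_t)| + \sum_k |DX_k(x_t)|^2\bigr)\,dt < \infty$ almost surely and then execute a stochastic Gronwall argument. The integrability is exactly where Lemma \ref{lem3} bites: since $p_3 > 2(d+1)$ the exponent $p_3/2$ exceeds $d+1$, so applying (\ref{lem3-1}) to $f = |DX_k|^2 \1_{\{|\cdot|\le R\}}$ gives
$$\E \int_0^T |DX_k(x_t)|^2 \1_{\{|x_t|\le R\}}\,dt \le Q_1(T)Q_2(x_0)\Bigl(\int_{|y|\le R} |DX_k(y)|^{p_3}\,dy\Bigr)^{2/p_3} < \infty,$$
and the analogous bound for $|DX_0(x_t)|$ follows from $p_4 > d+1$. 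On $\{|x_t| > R_1\}$ the polynomial growth (\ref{c4}) combined with the moment estimate (\ref{e2}) from Example \ref{ex4-1} (applicable because (\ref{c2}) implies (\ref{ex4-1-1})) handles the remainder.

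With integrability in hand, I would introduce the localizing stopping times
$$\sigma_M := \inf\Bigl\{t\ge 0 : \int_0^t \bigl(2|DX_0(x_s)| + \sum_k |DX_k(x_s)|^2\bigr)\,ds > M\Bigr\}, \qquad \tau_n := \inf\{t\ge 0 : |w_t| > n\},$$
apply It\^o's formula to $Z_t := |w_t|^2 \exp\bigl(-\int_0^t (2|DX_0(x_s)| + \sum_k |DX_k(x_s)|^2)\,ds\bigr)$, and check that on $[0,\sigma_M \wedge \tau_n]$ the drift of $Z_t$ is non-positive, so $Z$ is a bounded supermartingale. Taking expectations yields $\E Z_{t \wedge \sigma_M \wedge \tau_n} \le Z_0 = 0$, hence $w_{t \wedge \sigma_M \wedge \tau_n} = 0$ almost surely, and sending $n, M \to \infty$ (using $\sigma_M \uparrow \infty$, which follows from the integrability above) gives $w_t = 0$ on $[0,T]$, completing pathwise uniqueness.

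The principal obstacle is the unboundedness of $DX_k$: taking expectations directly in $|w_t|^2$ produces a Gronwall-type estimate whose dominating kernel is random and unbounded, so the naive argument fails. The Dol\'eans-exponential/supermartingale trick above bypasses this by working pathwise on the localized interval where the coefficient integral is bounded by $M$, but it hinges crucially on the Sobolev regularity in Assumption \ref{assumption1}(3) being strong enough for Krylov's estimate (Lemma \ref{lem3}) to convert spatial $L^{p_3}$-control of $DX_k$ into temporal $L^2$-control along the trajectory.
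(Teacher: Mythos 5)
Your proposal is correct, and its skeleton coincides with the paper's: uniqueness and non-explosion of the $x$-component via \cite[Theorem 1.3]{Zhang-11}, reduction to the linear equation for the difference $w_t=v_t-\tilde v_t$ with $w_0=0$, and the integrability $\E\int_0^T\big(|DX_0(x_t)|+\sum_k|DX_k(x_t)|^2\big)dt<\infty$ obtained exactly as you describe, from Lemma \ref{lem3} (with exponents $p_3/2>d+1$, $p_4>d+1$) combined with (\ref{c4}) and the moment bound of Example \ref{ex4-1}; this is the paper's estimate (\ref{prop1-1}). Where you diverge is the closing step. The paper applies It\^o's formula to $|w_t|^p$ and writes it as the solution of a scalar linear equation $dz_t=z_t\,dN_t$, so that the Dol\'eans-exponential representation gives \emph{pathwise} $|w_{t\wedge\tau}|^p=|w_0|^p\exp\big(M_{t\wedge\tau}-\tfrac12\langle M\rangle_{t\wedge\tau}+a_{t\wedge\tau}\big)=0$ for any stopping time $\tau$ below the common lifetime, with no expectations, no martingale verification and no localization in the coefficients (at the price of the $0/0$ convention in the definition (\ref{lem9-1a}) of $M$ and $a$). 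Your route instead weights $|w_t|^2$ by $e^{-A_t}$ and runs a localized supermartingale (stochastic Gronwall) argument; it needs the double localization $\sigma_M\wedge\tau_n$ and the observation that the stopped stochastic integral has quadratic variation bounded by $Cn^4M$, hence is a true martingale, but it avoids dividing by $|w_s|^2$ and is therefore somewhat more robust at the zero set of $w$. Both are standard and valid; the only point you gloss over that the paper states explicitly is the identification of the two explosion times $\zeta=\tilde\zeta$ at the end (immediate from $w\equiv0$ on the common lifetime and continuity, since the stopping time $\tau$ is arbitrary), which you should add for completeness.
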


\begin{proof}
Given a  Brownian motion $W_t$, suppose $(x_t,v_t, W_t, \zeta)$ and $(\tilde x_t, \tilde v_t, W_t,\tilde \zeta)$ are
two strong solutions to (\ref{sde1}) with the same initial points, up to the explosion time $\zeta$, $\tilde \zeta$.
We already know that  Assumption \ref{assumption1} implies that any solution to
(\ref{sde}) is non-explode and the pathwise uniqueness holds for (\ref{sde}),
 see e.g. \cite[Theorem 1.3]{Zhang-11}, i.e.  $x_t=\tilde x_t$ $\p$-$a.s.$,  for every $t\ge 0$.
 Let $\bar v_t:=v_t-\tilde v_t$, it is easy to see that $\bar v_t$ satisfies the following linear
equation,
\begin{equation*}
\begin{split}
& d \bar v_t=\sum_{k=1}^m DX_k(x_t)(\bar v_t) dW_t^k+DX_0(x_t)(\bar v_t )dt,\ \ \bar v_0=0.
\end{split}
\end{equation*}
Since $DX_k\in L_{\loc}^{p_3}(\R^d;\R^d)$, $1\le k \le m$, $DX_0\in L_{\loc}^{p_4}(\R^d;$ $\R^d)$, and
by Assumption \ref{assumption1}, they have polynomial growth outside of $B_{R_1}$,
following the proof of Lemma \ref{lem10},  we  apply Lemma \ref{lem3} and Example \ref{ex4-1}
to see that
\begin{equation}\label{prop1-1}
\begin{split}
\E\left(\int_0^T |DX_k(x_t)|^2 dt\right)<\infty,\
\E\left(\int_0^T |DX_0(x_t)| dt\right)<\infty.
\end{split}
\end{equation}
In particular the integrals in the above stochastic differential equation makes sense.

Set $\bar \zeta:=\zeta \wedge \tilde \zeta$.
Applying It\^o's formula to $\bar v_t$, for every $p>2$ and any stopping time $\tau<\bar \zeta$ we obtain,
\begin{equation*}
|\bar v_{t\wedge \tau}|^p =|v|^p+\sum_{k=1}^m \int_0^{t\wedge \tau} |\bar v_s|^p dM_s+ \int_0^{t\wedge \tau} |\bar v_s|^p da_s,
\end{equation*}
where the definition of the processes $M_s$, $a_s$ are the same  as that for
$M_s^\ee$, $a_s^\ee$ by (\ref{lem9-1a}), but with $\{X_k^\ee, F_t^\ee(x), V_t^\ee(x,v)\}$ replaced
by $\{X_k, x_t, \bar v_t\}$. The estimates in  (\ref{prop1-1}) ensure that $M_s$ and $a_s$ are well defined semi-martingales.
Following the argument for (\ref{lem9-2a}), we see that
\begin{equation*}
|\bar v_{t\wedge \tau}|^p= |\bar v_0|^p \exp\left(M_{t\wedge \tau}-\frac{\langle M \rangle_{t\wedge \tau}}{2}+
a_{t\wedge \tau}\right)=0.
\end{equation*}
Thus  $v_t=\tilde v_t$ $\p$-$a.s.$ for every $t<\tau$. Since $\tau$ is arbitrary, we have $\zeta=\tilde \zeta$
$\p$-$a.s.$  and $v_t=\tilde v_t$ $\p$-$a.s.$ for every $t<\zeta$. By now we have completed the proof.

\end{proof}

\begin{theorem}\label{th1}
Suppose that Assumption \ref{assumption1} holds.
There exists a unique strong solution
$(F_t(x), V_t(x,v))$ to (\ref{sde1}) with initial value $(x,v) \in \R^{2d}$, which is defined for $t \in [0,\infty)$.
Furthermore there is a  constant $\tilde T_0>0$, such that
for every compact
set $\tilde K \subseteq \R^{2d}$,
\begin{equation}\label{th1-0}
\lim_{\ee \to 0}\sup_{(x,v) \in \tilde K}\E\left(
\sup_{t \in [0,\tilde T_0]}\left(|F_t^\ee(x)-F_t(x)|+|V_t^{\ee}(x,v)-V_t(x,v)|\right)
\right)=0.
\end{equation}
\end{theorem}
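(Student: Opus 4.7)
The plan is to apply the convergence Lemma \ref{lem5} to the $2d$-dimensional augmented system (\ref{approximated-sde}), regarded as an SDE on $\R^{2d}$ with state $(x,v)$ and smooth coefficients
\begin{equation*}
Y_k^\ee(x,v) := \bigl(X_k^\ee(x),\, DX_k^\ee(x)(v)\bigr),\qquad 0\le k\le m,
\end{equation*}
converging formally to $Y_k(x,v):=(X_k(x),DX_k(x)(v))$. Pathwise uniqueness for the limiting system is already supplied by Proposition \ref{prop1}, so once the three hypotheses of Lemma \ref{lem5} are verified the conclusion will give, on a sufficiently short time interval $[0,\tilde T_0]$, both the unique strong solution and the uniform convergence (\ref{th1-0}).

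First I would fix exponents $p=2+\tfrac{\beta_1}{2}>2$ and $q=1+\tfrac{\beta_2}{2}>1$ with $\beta_1,\beta_2$ as in Lemma \ref{lem10}, and then choose $\tilde T_0>0$ small enough so that Lemma \ref{lem9} supplies uniform bounds on $\E|V_t^\ee(x,v)|^r$, for a sufficiently large Hölder-conjugate exponent $r$, throughout $[0,\tilde T_0]$. Hypothesis (1) of Lemma \ref{lem5} then reduces to a Hölder splitting
\begin{equation*}
\E\bigl|DX_k^\ee(F_t^{\tilde\ee})(V_t^{\tilde\ee})\bigr|^{p}\le \bigl(\E|DX_k^\ee(F_t^{\tilde\ee})|^{2+\beta_1}\bigr)^{p/(2+\beta_1)}\bigl(\E|V_t^{\tilde\ee}|^{r}\bigr)^{1-p/(2+\beta_1)},
\end{equation*}
where the first factor is controlled by (\ref{lem10-3a}) and the second by Lemma \ref{lem9}; the $X_k^\ee$-components are immediate from (\ref{c2aa}) for $\tilde X_{k,\ee^{-\lambda_0}}$ and the moment estimate (\ref{lem9-3}). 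Hypothesis (2) is handled identically, replacing (\ref{lem10-3a}) by the convergence statements (\ref{lem10-1})--(\ref{lem10-3}).

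The main obstacle is hypothesis (3): for any subsequential weak limit $\mu^0$ on path space of $\mu^{\ee_n,(x_n,v_n)}$, one must show
\begin{equation*}
\int_0^{\tilde T_0}\!\!\int_{\W}\bigl|DX_k^\ee(\sigma_t^{(1)})(\sigma_t^{(2)})-DX_k(\sigma_t^{(1)})(\sigma_t^{(2)})\bigr|^{p}\,\mu^0(d\sigma)\,dt\longrightarrow 0,
\end{equation*}
where $\sigma_t=(\sigma_t^{(1)},\sigma_t^{(2)})\in\R^{2d}$, and likewise for the drift. My plan is to lift the uniform Krylov-type density estimate of Lemma \ref{lem3+lem7} from the prelimit to $\mu^0$: for every nonnegative continuous $f$ with compact $x$-support, weak convergence and Lemma \ref{lem3+lem7} yield
\begin{equation*}
\int_0^{\tilde T_0}\!\!\int f(t,\sigma_t^{(1)})\,\mu^0(d\sigma)\,dt\le C(K)Q(\tilde T_0)\Bigl(\int_0^{\tilde T_0}\!\!\int f^{d+2}(t,y)\,dy\,dt\Bigr)^{1/(d+2)},
\end{equation*}
and this extends to nonnegative $f\in L^{d+2}$ by monotone approximation. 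Truncate the $v$-variable on $\{|\sigma_t^{(2)}|\le N\}$ and the $x$-variable on $\{|\sigma_t^{(1)}|\le R\}$: on the truncated region I apply the inequality with $f=|DX_k^\ee-DX_k|^{p}$, which goes to $0$ as $\ee\to 0$ by Lemma \ref{lem8}; the $v$-tail is absorbed by uniform integrability of $|\sigma_t^{(2)}|^r$, obtained by Fatou from Lemma \ref{lem9}; the $x$-tail is controlled by the polynomial bound (\ref{c4}) (uniform in $\ee$ by Lemma \ref{lem7}) together with the moment bound on $\sigma_t^{(1)}$ inherited from Example \ref{ex4-1} via Fatou. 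This is the one place where having the uniform density estimate \emph{along the approximating solutions} is essential.

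Having verified all three hypotheses, Lemma \ref{lem5} gives the unique strong solution $(F_t(x),V_t(x,v))$ on $[0,\tilde T_0]$ together with (\ref{th1-0}). To extend to $[0,\infty)$, I would exploit the linearity of the $v$-equation: since $F_t(x)$ is globally defined and non-explosive under Assumption \ref{assumption1}, and the coefficients $DX_k(F_t(x))$ satisfy the integrability conditions established in the proof of Proposition \ref{prop1}, iterating the short-time existence argument with initial data $(F_{k\tilde T_0}(x),V_{k\tilde T_0}(x,v))$ produces a solution on each $[k\tilde T_0,(k+1)\tilde T_0]$; pathwise uniqueness (Proposition \ref{prop1}) ensures the pieces glue into a unique strong solution on $[0,\infty)$.
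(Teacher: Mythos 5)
Your route is the same as the paper's: view (\ref{approximated-sde}) as the approximation of the augmented system (\ref{sde1}), take pathwise uniqueness from Proposition \ref{prop1}, verify hypotheses (1)--(3) of Lemma \ref{lem5} on a short interval via Lemmas \ref{lem9} and \ref{lem10} plus H\"older, lift the Krylov occupation estimate of Lemma \ref{lem3+lem7} to the weak limit $\mu^0$ for hypothesis (3), and concatenate in time using pathwise uniqueness and the Markov property. One step, however, is genuinely glossed over, and it is exactly the point where the paper works hardest: your claim that the lifted estimate ``extends to nonnegative $f\in L^{d+2}$ by monotone approximation.'' A general nonnegative Borel function is not an increasing limit of continuous functions, and the real obstruction is that, a priori, the occupation measure of $\sigma_1$ under $\mu^0$ could charge Lebesgue-null sets, so Lebesgue-a.e.\ approximation of $|DX_k^\ee-DX_k|$ by continuous functions says nothing about its $\mu^0$-integral. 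The paper resolves this by first passing to indicators of open sets, then using outer regularity to prove (\ref{th1-4a}) (the occupation measure under $\mu^0$ does not charge Lebesgue-null sets), and only then approximating bounded Borel $f$ pointwise off a null set before invoking monotone convergence. Your step can alternatively be repaired without the null-set argument, because only an upper bound is needed: majorize $f$ by a lower semicontinuous $g\ge f$ with $\|g\|_{L^{d+2}}$ close to $\|f\|_{L^{d+2}}$ (apply Vitali--Carath\'eodory to $f^{d+2}$) and use that nonnegative l.s.c.\ functions are increasing limits of continuous ones; but as written the extension is not justified, and without it hypothesis (3) is not verified.

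Two smaller bookkeeping points. First, (\ref{lem10-3a}) bounds the \emph{time-integrated} expectation, so the H\"older splitting for hypothesis (1) must be performed with respect to $dt\otimes d\p$, as in (\ref{th1-2a}), rather than pointwise in $t$ as your display suggests. Second, pairing the exponent $d+2$ in the lifted estimate with $f=|DX_k^\ee-DX_k|^{p}$, $p=2+\beta_1/2$, requires $DX_k^\ee\to DX_k$ in $L^{p(d+2)}_{\loc}$, and $p(d+2)$ can exceed $p_3$ when $p_3$ is close to $2(d+1)$; the consistent choice is the paper's, namely an exponent $\delta_1\in(d+1,p_3/2)$ with $(2+\beta_1)\delta_1=p_3$ applied to $|DX_k^\ee-DX_k|^{2+\beta_1}$ as in Lemma \ref{lem10} and (\ref{th1-5}), followed by H\"older against the $v$-moment bound (\ref{th1-6}) (or, if you keep your truncation in $v$, choose the exponents so that the power of the difference times the Krylov exponent stays below $p_3$). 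The final concatenation to $[0,\infty)$ matches the paper's argument.
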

\begin{proof}
Through the proof, when the initial value $(x,v) \in \R^{2d}$ is fixed,
we denote $(F_t^\ee(x), $ $V_t^\ee(x,v))$ and $(F_t(x),V_t(x,v))$ by $(F_t^\ee, V_t^\ee)$ and $(F_t,V_t)$ respectively
for simplicity.

Since pathwise uniqueness for (\ref{sde1}) is proved in Proposition \ref{prop1}, we only need to verify  that, with  (\ref{approximated-sde}) as the approximating equations for  (\ref{sde1}),  the conditions (1)-(3) in Lemma \ref{lem5} hold.
According to Lemma \ref{lem5}, this will lead to the conclusion of the existence of a complete strong solution to (\ref{sde1})
   and the convergence in (\ref{th1-0}).

By Lemma \ref{lem10}, there exists a $\beta_1>0$, such that
for every $T>0$, compact set $K \subseteq \R^d$, $1\le k \le m$,
\begin{equation}\label{th1-1}
\begin{split}
& \sup_{\ee,\tilde \ee \in (0,\ee_0)}\sup_{x \in K}\E\left(\int_0^T \Big
|DX_k^\ee\Big(F_t^{\tilde \ee}\Big)\Big|^{2+\beta_1}dt\right)<\infty.
\end{split}
\end{equation}
For a  $\gamma_1 \in (0, \beta_1)$, let $\alpha={2+\beta_1\over 2+\gamma_1}>1$
and let $ \alpha'={2+\beta_1 \over \beta_1-\gamma_1}$ be conjugate to $\alpha$ .  By Lemma \ref{lem9}, there is a
constant $T_1(\gamma_1, \beta_1)>0$ such that for every compact set $\tilde K \subseteq \R^{2d}$,
\begin{equation}\label{th1-2}
\sup_{\ee \in (0,\ee_0)}\sup_{(x,v) \in \tilde K}\sup_{t \in [0,T_1]}
\E\left(\Big|V_t^\ee \Big|^{(2+\gamma_1)\alpha'}\right)<\infty.
\end{equation}
By H\"older inequality,
\begin{equation}\label{th1-2a}
\begin{split}
&\sup_{\ee,\tilde \ee \in (0,\ee_0)}\sup_{(x,v) \in \tilde K}
\E\left(\int_0^{T_1} \big|DX_k^\ee(F_s^{\tilde \ee})\Big( V_s^{\tilde \ee}\Big)\big|^{2+\gamma_1}ds\right)\\
&\le  \sup_{\ee,\tilde \ee \in (0,\ee_0)}\sup_{(x,v) \in \tilde K}\left\{\left(
\E\left(\int_0^{T_1} \Big|DX_k^\ee\Big(F_s^{\tilde \ee}\Big)\Big|^{2+\beta_1}ds\right)\right)^{\frac{1}{\alpha}}\right.\\
&  \qquad \qquad \qquad\left.\cdot\left(\E\left(\int_0^{T_1} \Big|V_s^{\tilde \ee}\Big|^{(2+\gamma_1)\alpha'}ds
\right)\right)^{\frac{1}{\alpha'}}\right\}<\infty.
\end{split}
\end{equation}
As the same way, there exist constants $\gamma_2>0$ and $T_2(\gamma_2,\beta_2)>0$ , such that
for every $p>0$,
\begin{equation*}
\begin{split}
&\sup_{\ee,\tilde \ee \in (0,\ee_0)}\sup_{(x,v) \in \tilde K}
\E\left(\int_0^{T_2} \big|DX_0^\ee(F_s^{\tilde \ee})\Big( V_s^{\tilde \ee}\Big)\big|^{1+\gamma_2}ds\right)<\infty,\\
&\sup_{\ee,\tilde \ee \in (0,\ee_0)}\sup_{(x,v) \in \tilde K}
\E\left(\int_0^{T_2} \big|X_k^\ee(F_s^{\tilde \ee})\big|^{p}ds\right)<\infty,\ \forall\ 0 \le k \le m.
\end{split}
\end{equation*}
Combing this with (\ref{th1-2a}) we know the condition (\ref{lem5-1}) of Lemma \ref{lem5} holds
for equation (\ref{approximated-sde}) in time interval
$t \in [0,\tilde T_0]$ with $\tilde T_0:=\min\{T_1,T_2\}$.

As the same argument above, according to Lemma \ref{lem9}
, \ref{lem10} and by H\"older inequality, we conclude that condition (\ref{lem5-2}) of Lemma \ref{lem5}
for equation (\ref{approximated-sde}) in time interval
$t \in [0,\tilde T_0]$.

We proceed to prove the last condition, condition (\ref{lem5-3}) in Lemma \ref{lem5}.
Let $\mu^{\ee,x,v}$ be the distribution of the stochastic process $(F_t^\ee(x), V_t^\ee(x,v))$ on
$\W:=C([0,\tilde T_0];$ $ \R^{2d})$ and
let $\sigma(t)=(\sigma_1(t),\sigma_2(t))$ be the canonical path on $\W$, so
the  distribution of $\sigma(\cdot)$ under $\mu^{\ee,x,v}$ is the same as that of
$(F_{\cdot}^\ee(x), V_{\cdot}^\ee(x,v) )$ under $\p$. Suppose that $\{x_n,v_n\}_{n=1}^{\infty}\subseteq \tilde K$, $\{\ee_n\}_{n=1}^{\infty}\subseteq (0,\ee_0)$ are sequences such that
$\mu^{\ee_n,x_n,v_n}$ converges weakly to some $\mu^0$  as $n \to \infty$.
By Lemma \ref{lem3+lem7}, for every $p > d+1$ and
non-negative Borel measurable function $f: \R^d \to \R_+$,
\begin{equation*}
\begin{split}
&\sup_{n}\int_{\W}\int_0^{\tilde T_0}f(\sigma_1(t))dt\;\mu^{\ee_n, x_n,v_n}(d\sigma)
\le C(\tilde K , \tilde T_0)\|f\|_{p},
\end{split}
\end{equation*}
where $\|f\|_{p}$ denotes the $L^p$ norm with respect to the Lebesgue measure.
If $f$  is furthermore bounded and  continuous,
\begin{equation}\label{th1-4}
\begin{split}
&\int_{\W}\int_0^{\tilde T_0}f(\sigma_1(t))\;dt\,\mu^{0}(d\sigma)
=\lim_{n\to \infty}\int_0^{\tilde T_0} \int_{\W}f\big(\sigma_1(t)\big)\;\mu^{\ee_n, x_n,v_n}(d\sigma)\;dt\\
&\le \sup_n \int_0^{\tilde T_0} \int_{\W}f(\sigma_1(t))\;\mu^{\ee_n, x_n,v_n}(d\sigma)\;dt
\le C(\tilde K , \tilde T_0)\|f\|_{p}.
\end{split}
\end{equation}

Let $O\subseteq \R^d$ be a bounded open set, there exists a sequence $\{g_n\}_{n=1}^{\infty}$,
of  non-negative continuous functions with compact supports
 such that
$\sup_{x \in \R^d}$ $|g_n(x)|\le 1$ and
$\lim_{n \to \infty}g_n(x)=\1_O(x)$ point wise. Then it follows from the dominated convergence theorem
that (\ref{th1-4}) holds with $f(x)=\1_O(x)$.
For every bounded measurable set $U\subseteq \R^d$ which is with null Lebesgue measure,
 from the out regularity of the Lebesgue measure, there exists a sequence of bounded open set
$\{O_n\}_{n=1}^{\infty}$ containing $U$ such that  $\lim_{n \to \infty}Leb(O_n)=0$. Then putting such
$\1_{O_n}$
into (\ref{th1-4}), letting $n \to \infty$, by Fatou lemma we have
\begin{equation}\label{th1-4a}
\int_{\W}\int_0^{\tilde T_0}\1_{U}\big(\sigma_1(t)\big)\; dt\,\mu^{0}(d\sigma)=0.
\end{equation}
Let $f: \R^d \to \R_+$ be  a non-negative  bounded Borel measurable function with
compact support. There is a sequence,
 $\{f_n\}_{n=1}^{\infty}$, of
non-negative continuous functions with compact supports  and a bounded Lebesgue-null set $Q$ such that
$\sup_n \|f_n\|_{p}$ $ \le \|f\|_{p}$ for all $1 \le p \le \infty$, and
\begin{equation}\label{th1-5a}
\lim_{n \to \infty} f_n(x)=f(x),\ \ \ \forall\ x \notin Q.
\end{equation}
It follows that
\begin{equation*}
\begin{split}
&\lim_{n \to \infty}\int_{\W}\int_0^{\tilde T_0}\Big| f_n\big(\sigma_1(t)\big)-f\big(\sigma_1(t)\big)\Big|\,dt\,\mu^{0}(d\sigma)\\
&\le \lim_{n \to \infty}\int_{\W}\int_0^{\tilde T_0}\Big(\big|f_n(\sigma_1(t))-
f(\sigma_1(t))\big|\Big)\1_{Q^c}(\sigma_1(t))\;dt\mu^{0}(d\sigma)\\
&+2\|f\|_{L_\infty} \int_{\W}\int_0^{\tilde T_0}\1_{Q}(\sigma_1(t))dt\,\mu^{0}(d\sigma)\\
&=\lim_{n \to \infty}\int_{\W}\int_0^{\tilde T_0}\big(|f_n(\sigma_1(t))-
f(\sigma_1(t))|\big)\1_{Q^c}(\sigma_1(t))\;dt\,\mu^{0}(d\sigma)=0,
\end{split}
\end{equation*}
where in the second step above we use the property (\ref{th1-4a}) and the last step is due to
(\ref{th1-5a}) and the dominated convergence theorem.
Hence putting such $f_n$ into (\ref{th1-4}) and letting $n \to \infty$, we know
(\ref{th1-4}) holds for  every non-negative bounded Borel measurable function with compact support, and
by the monotone convergence theorem, (\ref{th1-4})  holds
for every non-negative measurable function $f: \R^d \to \R_+$.

Applying (\ref{th1-4}) and Lemma \ref{lem8}, and following the proof in Lemma \ref{lem10}, for all $1 \le k \le m$ we have
\begin{equation}\label{th1-5}
\begin{split}
&\lim_{\ee \to 0}\int_{\W}\int_0^{\tilde T_0} \big|DX^\ee_k(\sigma_1(t))-
DX_k(\sigma_1(t))\big|^{2+\beta_1} \, dt \;\mu^0(d \sigma)=0.
\end{split}
\end{equation}
By (\ref{th1-2}), as the same approximation argument for (\ref{th1-4}) we can prove that
\begin{equation}\label{th1-6}
\sup_{t \in [0,\tilde T_0]}
\int_{\W}|\sigma_2(t)|^{\frac{(2+\beta_1)(2+\gamma_1)}{\beta_1-\gamma_1}}\;
\mu^0(d \sigma)<\infty.
\end{equation}
Following the same procedure for (\ref{th1-2a}), by (\ref{th1-5}), (\ref{th1-6})  and H\"older inequality
we obtain
\begin{equation*}
\begin{split}
&\lim_{\ee \to 0}\int_{\W}\int_0^{\tilde T_0} \Big|D X^\ee_k\big(\sigma_1(t)\big)
(\sigma_2(t))-
DX_k\big(\sigma_1(t)\big)(\sigma_2(t))\Big|^{2+\gamma_1}
\;dt\,\mu^0(d \sigma)=0.\\
\end{split}
\end{equation*}
Similarly, we can prove the corresponding convergence in condition (\ref{lem5-3}) of Lemma \ref{lem5}
associated with the derivative flow equation (\ref{approximated-sde}).

By now we have verified all the conditions of Lemma \ref{lem5} hold for
(\ref{approximated-sde}), so there exists a unique complete strong solution
$(F_t,V_t)$ for (\ref{sde1}) in time interval $t \in [0,\tilde T_0]$ such that
(\ref{th1-0}) holds. Let $\Phi_t(x,v,W_{\cdot}):=(F_t(x), V_t(x,v))$.
For $\tilde T_0<t\leqslant 2\tilde T_0$, we define
\begin{equation*}
 \Phi_t(x,v,W_{\cdot}):=\Phi_{t-\tilde T_0}\big(F_{\tilde T_0}(x), V_{\tilde T_0}(x,v),\theta_{\tilde T_0}(W)_{\cdot}\big),
\end{equation*}
where $\theta_{\tilde T_0}(W): C([0,\infty);\R^m) \to C([0,\infty);\R^m)$ defined by
$\theta_{\tilde T_0}(W)_t=W_{t+\tilde T_0}-W_{\tilde T_0}$ is the time shift operator.
By the Markov property and the pathwise uniqueness one may check that this is indeed the solution to
SDE (\ref{sde1}) in $t \in [\tilde T_0, 2\tilde T_0]$.
Repeating this procedure, we  will obtain a unique
global strong solution to SDE (\ref{sde1}).
\end{proof}

 \begin{remark}
In Assumption \ref{assumption1}, we assume that the elliptic constant, $|X_k|$ and
$|DX_k|$ to grow at most polynomially
as $|x| \to \infty$. The reason is that based on (\ref{c2}),
we have to apply the function $g(x):=
\log(1+|x|^2)$ in Lemma \ref{lem2} to obtain the uniform integrable property (\ref{e2}). If we strengthen
 (\ref{c2}) slightly, see Assumption \ref{assumption2} below, we may apply the polynomial function in  Lemma \ref{lem2} (see
\cite[Corollary 6.3]{Li-flow}). Moreover, following the same argument in the proof of Theorem \ref{th1} we will obtain
Corollary \ref{cor1}.
\end{remark}

 \begin{assumption}\label{assumption2}
Suppose there is a constant $\alpha \in (0, \frac{1}{2}]$ such that the following conditions are satisfied.
\begin{enumerate}
\item [(1)] There are positive constants $C_1, C_2$ such that
\begin{equation*}
\sum_{i,j=1}^d a_{i,j}(x)\xi_i \xi_j\geqslant \frac{C_1 |\xi|^2}{1+\e^{C_2|x|^{2\alpha}}} ,
\quad  \forall  x \in \R^n,\ \xi=(\xi_1,...,\xi_d)\in \R^d.
\end{equation*}
\item[(2)] There are positive constants $C_3,C_4$ such that for
all $0\le k \le m$,
\begin{equation*}
|X_k(x)|\le C_3(1+\e^{C_4|x|^{2\alpha}}).
\end{equation*}
There is a constant $\delta \in (0,1]$, and for every $p>0$ there is a constant $C(p)>0$ such that

\begin{equation*}
\begin{split}
&\sup_{|y|\le \delta}\Big(\sum_{k=1}^m p(1+|x|^{2\alpha})|X_k(x+y)|^2+\langle x , X_0(x+y)\rangle \Big)\\
& \le C(p)(1+|x|^{2(1-\alpha)}).
\end{split}
\end{equation*}

\item[(3)] Part (3) of Assumption \ref{assumption1} holds;

\item[(4)] There exists a positive constant $R_1>0$,
such that for every $p>1$,
\begin{equation*}
\begin{split}
&K_p(x)\le  C(p)(1+|x|^{2\alpha}), \ \ \ |x|>R_1,
\end{split}
\end{equation*}
for some $C(p)>0$, where the function $K_p(x)$ is defined in part (3) of the Assumption \ref{assumption1}.
Moreover, for all $0 \le k \le m$,
\begin{equation*}
|DX_k(x)|\le C_5(1+\e^{C_6|x|^{2\alpha}}),\ \ \ \ |x|>R_1,
\end{equation*}
for some positive constants $C_5$, $C_6$.
\end{enumerate}
\end{assumption}

\begin{corollary}\label{cor1}
The conclusion of Theorem \ref{th1} holds with Assumption \ref{assumption1} replaced by Assumption \ref{assumption2}.
\end{corollary}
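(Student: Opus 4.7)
The plan is to mimic the architecture of the proof of Theorem \ref{th1}, replacing at each stage the use of the logarithmic test function $g(x)=\log(1+|x|^2)$ with a polynomial $g(x)=(1+|x|^2)^{q}$ for a suitable exponent $q$. The key observation, made in the remark preceding the corollary, is that condition (2) of Assumption \ref{assumption2} gives the estimate
\begin{equation*}
\sup_{|y|\le \delta}\Big(p(1+|x|^{2\alpha})\sum_{k=1}^m |X_k(x+y)|^2+\langle x , X_0(x+y)\rangle \Big) \le C(p)(1+|x|^{2(1-\alpha)}),
\end{equation*}
which is precisely the growth assumption for which \cite[Corollary 6.3]{Li-flow} (an application of Lemma \ref{lem2} with $g(x)=(1+|x|^2)^q$) yields exponential moment bounds of the form $\E\bigl(\exp(\lambda|F_t(x)|^{2\alpha})\bigr)\le C$ for every $\lambda>0$ and $t$ in some bounded interval $[0,T_1(\lambda)]$.

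First I would reproduce the constructions of Sections \ref{sec-construction}. The truncation lemma (Lemma \ref{lem6}) and the mollification lemma (Lemma \ref{lem7}) carry over verbatim: the polar-coordinate arguments only use that the right hand sides of the assumptions depend monotonically on $|x|$, and the ellipticity argument rests purely on the local Sobolev regularity of $\{X_k\}$, unchanged in Assumption \ref{assumption2}. One checks directly that each condition in Assumption \ref{assumption2} is preserved by the truncation and convolution with the corresponding constants independent of $\ee$; in particular the exponential-type bounds on $|X_k^\ee|$ and $|DX_k^\ee|$, and the local integrability of $\exp(\kappa(p)K_p^\ee)$ needed for the analogue of (\ref{lem7-1b}), are routine.

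Next I would derive a version of Lemma \ref{lem3+lem7} adapted to the exponential growth. Applying Lemma \ref{lem2} to $g(x)=(1+|x|^2)^q$ for small $q\le \alpha$ and using the new condition (2), one obtains, uniformly in $\ee\in(0,\ee_0)$ and $x$ in a compact set $K$, the stretched-exponential estimate
\begin{equation*}
\sup_{\ee}\sup_{x\in K}\E\bigl(\exp(\lambda |F_t^\ee(x)|^{2\alpha})\bigr)\le C(K,\lambda,t),\quad t\in[0,T_1(\lambda)].
\end{equation*}
Combined with the Krylov-type bound in Lemma \ref{lem1} (whose hypotheses only require local boundedness of $\det A$-type quantities and control of $\mathbf A(R),\mathbf B(R)$, both now controlled by the above stretched-exponential moments together with condition (2)), this yields the analogue of (\ref{lem3-1-2}), namely for $p>d+1$ and nonnegative measurable $f$,
\begin{equation*}
\sup_{\ee}\sup_{x\in K}\E\Big(\int_0^T f(t,F_t^\ee(x))\,dt\Big) \le C(K,T)\Big(\int_0^T\int_{\R^d} f^p(t,y)\,dy\,dt\Big)^{1/p},
\end{equation*}
for $T$ in a (possibly shrunk) interval $[0,\tilde T_0]$.

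With these tools in hand, the exponential-moment analogue of Lemma \ref{lem9} follows by the same It\^o-formula computation and the super-martingale inequality (\ref{lem9-2}): the bound $K_p^\ee(x)\le C(p)(1+|x|^{2\alpha})$ from part (4) of Assumption \ref{assumption2}, together with the new stretched-exponential moment estimate for $F_t^\ee$, shows that $\E\big(\exp(\int_0^t K_p^\ee(F_s^\ee)\,ds)\big)$ is finite on some interval $[0,T_0(p)]$, giving uniform $L^p$ control on $V_t^\ee$. Then Lemma \ref{lem10} and Proposition \ref{prop1} transfer without change, since they rely only on the Krylov-type estimate, on Lemma \ref{lem8} (purely about Sobolev convolution), and on the uniform moments we have just reestablished. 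Finally I would copy the proof of Theorem \ref{th1} line by line: the three hypotheses of the convergence Lemma \ref{lem5} are verified for (\ref{approximated-sde}) using the newly obtained uniform bounds, and the existence, uniqueness, and convergence statements follow. The global-in-time extension is by concatenation via the Markov property and pathwise uniqueness as in the final paragraph of the proof of Theorem \ref{th1}.

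The main obstacle, and the only step requiring real care, is matching the rates: one has to choose the polynomial exponent $q$ and the time horizon $T_1(\lambda)$ so that the stretched-exponential moment estimate for $F_t^\ee$ dominates both the exponential growth of $|X_k^\ee|,|DX_k^\ee|$ and the exponentiated $K_p^\ee$ appearing in Lemma \ref{lem9}. The restriction $\alpha\le 1/2$ in Assumption \ref{assumption2} is precisely what makes these rates compatible, and it is at this step that one must pick the appropriate $q\le \alpha$ and restrict to a possibly smaller time interval $\tilde T_0$ depending on the constants $C_2,C_4,C_6$ in Assumption \ref{assumption2}.
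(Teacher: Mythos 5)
Your proposal is correct and follows essentially the same route the paper intends: the paper proves the corollary only via the remark preceding it, namely replace the logarithmic test function in Lemma \ref{lem2} by a polynomial one (as in \cite[Corollary 6.3]{Li-flow}) to get exponential moments of $|F_t^\ee(x)|^{2\alpha}$, which control the terms $K_p^\ee(x)\le C(p)(1+|x|^{2\alpha})$ and the exponentially growing coefficients, and then rerun the argument of Theorem \ref{th1} verbatim. Your added care about choosing $q\le\alpha$ and the time horizon is consistent with this (in fact the exponential moment bound from Lemma \ref{lem2} holds for all $t$, so no extra time restriction is needed at that step beyond the one already coming from $\kappa(p)$).
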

\section{Proof of Theorem \ref{th:regularity} }\label{sec-proof}

Let $(F_t^\ee(x), V_t^\ee(x,v))$ be the solution to (\ref{approximated-sde}) with initial point
$(x,v) \in \R^{2d}$, since $X_k^\ee \in C_b^{\infty}(\R^d;\R^d)$, $x\mapsto F_t^\epsilon(x)$ is differentiable
and $V_t^\ee(x,v)=D_x F_t^\ee(x)(v)$, $\p$-a.s..  
For any given $R>0$, $p>1$, and for all $x, y \in B_R:=\{x \in \R^d;\ |x|\le R\}$, $t>0$,
\begin{equation}\label{th2-0}
\begin{split}
&\E\left(|F_t^{\ee}(x)-F_t^{\ee}(y)|^p\right)
=\E\left( \left|\left\langle x-y, \int_0^1 D_x F_t^\ee(x+s(y-x))ds \right\rangle \right|^p\right)\\
&\le C|x-y|^p\sup_{x \in B_{2R}, |v|\le 1}
\E\left(|V_t^{\varepsilon}(x,v)|^p\right).
\end{split}
\end{equation}
See also the analysis in the proof of Theorem 4.1 in \cite{Li-flow}.

According to (\ref{e2}) and Lemma \ref{lem7},  for every $p>1$, $T>0$,  and $K$ compact,
 $$\sup_{x\in K}\sup_{\ee \in (0,\ee_0)}\E\big(|F_t^{\ee}(x)|^{p+1}\big)<\infty,$$
which implies that $\{|F_t^{\ee}(x)|^{p}\}_{\ee \in (0,\ee_0), x \in K}$ is uniformly integrable.
So by Theorem \ref{th1} we derive for every $t \in [0,\tilde T_0]$,
\begin{equation*}
\lim_{\ee \to 0}\E
\left(|F_t^\ee(x)-F_t(x)|^{p}\right)=0,\ \forall x \in \R^d,
\end{equation*}
where $\tilde T_0$ is the constant in Theorem \ref{th1}.

Let $\hat T_0(p):=\min\{\tilde T_0, T_0(p)\}$, where $T_0(p)$ is the 
constant in Lemma \ref{lem9}. Therefore according to Lemma \ref{lem9}, we take the limit $\ee \to 0$
in (\ref{th2-0}) to obtain for every $t \in [0,\hat T_0]$, $x, y \in B_R$,
\begin{align*}
&\E\left(|F_t(x)-F_t(y)|^{p}\right)\\
&\le C|x-y|^p\sup_{\ee \in (0,\ee_0)}\sup_{t\in [0,\hat T_0]}\sup_{z \in B_{2R}, |v|\le 1}
\E\left(|V_t^{\varepsilon}(z,v)|^p\right)\\
&\le C(\hat T_0,R)|x-y|^p,
\end{align*}
Since $X_k$ are polynomial growth, it is easy to show for every
$0 \le s \le t \le \hat T_0(p)$, $x,y \in B_R$,
\begin{equation*}
\E\big(|F_t(x)-F_s(y)|^p\big)
\leqslant C(R,\hat T_0) \big(|x-y|^p+|t-s|^{\frac{p}{2}}\big).
\end{equation*}
In the above estimate, noting that $R$ is arbitrary large,  and we may take $p>2(d+1)$ and apply Kolmogorov's continuity criterion
 to conclude that  there
is a version of the solution flow $F_t(x,\omega)$ for SDE (\ref{sde}), such that
$F_{\cdot}(\cdot,\omega)$ is continuous in $[0,\hat T_0]\times \R^d$.

As for $t>\hat T_0$, let
$\Psi_t(x,W_{\cdot}):=F_t(x,\omega)$. By the Markov property and the uniqueness of the strong solution to SDE (\ref{sde}), it is
satisfied that
\begin{equation*}
F_t(x,\omega)=\Psi_t(x,W_{\cdot})=\Psi_{t-\hat T_0}(F_{\hat T_0}(x,\omega),\theta_{\hat T_0}(W)_{\cdot}), \ \p- a.s.
\end{equation*}
where $\theta_{\hat T_0}(W)_t=W_{t+\hat T_0}-W_{\hat T_0}$ is the time shift operator.
Hence  the solution flow $F_{\cdot}(\cdot,\omega)$ is continuous in $[0,2\hat T_0]\times \R^d$, and
  in $[0,\infty)\times \R^d$ by repeating the procedure.

Let $\{e_i\}_{i=1}^d$ be an orthonormal basis of $\R^d$ and
$(F_t(x), V_t(x,v))$ be the strong solution to (\ref{sde1})
with initial point $(x,v)\in \R^{2d}$.
By Theorem \ref{th1} and the diagonal principle there exist a subsequence
$\{\varepsilon_n\}_{n=1}^{\infty}$
with $\lim_{n\rightarrow \infty} \varepsilon_n=0$ and a set
$\tilde{\Lambda}_0 \subseteq \Omega$ with $\p(\tilde{\Lambda}_0)=0$, such that if $\omega \in \tilde{\Lambda}_0^c $,
for every $R>0$, $1\le i \le d$,
\begin{equation}\label{th2-1}
\lim_{n\rightarrow \infty}\int_{\{|x|\leqslant R\}}\sup_{ t \in [0,\tilde T_0]}
|V_t^{\ee_n}(x,e_i,\omega)-V_t(x,e_i,\omega)| dx=0,
\end{equation}
\begin{equation}\label{th2-2}
\lim_{n\rightarrow \infty}\int_{\{|x|\leqslant R\}}\sup_{ t \in [0,\tilde T_0]}
|F_t^{\varepsilon_n}(x,\omega)-F_t(x,\omega)| dx=0.
\end{equation}

For simplicity we write $(F_t^n(x),V_t^{n}(x,e_i))$ for
$(F_t^{\varepsilon_n}(x), V_t^{\varepsilon_n}(x,e_i))$. As referred above,
$D_{x}F_t^n(x)(v)=V_t^n(x,v)$ a.s.,
therefore there  exists  a $\p$-null set $\Lambda_n$,  such that
for every $\omega \in \Lambda_n^c$, the following integration by parts formula
holds for every $1\le i \le d$, $t \in [0,\tilde T_0]$ and $\varphi \in C_0^{\infty}(\R^d)$,
\begin{equation}\label{th2-3}
\int_{\R^d}{\partial \varphi \over \partial x_i}(x)  F_t^n(x,\omega)dx
=-\int_{\R^d}\varphi(x) V_t^{n}(x,e_i,\omega)dx.
\end{equation}
Let $\tilde{\Lambda}:=(\bigcup_{n=1}^{\infty}\Lambda_n)\cup\tilde{\Lambda}_0$, then $\tilde \Lambda$ is a $\p$-null set. Taking  $n$ to infinity  in (\ref{th2-3}) and using (\ref{th2-1}), (\ref{th2-2}) we see
for every $1\le i \le d$, $\omega \in \tilde{\Lambda}^c$, $t \in [0, \tilde T_0]$,
\begin{equation*}
\int_{\R^d}  {\partial \varphi \over \partial x_i}(x)F_t(x,\omega)dx
=-\int_{\R^d}\varphi(x)V_t(x,e_i,\omega)dx
\end{equation*}
which means that $F_t(\cdot,\omega)$  is weakly differentiable in
the distribution sense for  almost surely all $\omega$ and
$D_{x} F_t(x,\omega)(e_i)=V_t(x,e_i,\omega)$. Next we prove that given a $p>1$, there exist a 
$T_1>0$, such that for every $t \in [0,T_1]$,
$F_t(\cdot,\omega) \in W^{1,p}_{\loc}(\R^d;\R^d) $, a.s..

By Lemma \ref{lem9}, Theorem \ref{th1} and Fatou Lemma, given a $p>1$, there is a constant $0<T_1 \le \tilde T_0$, such that
for every $R>0$, $t \in [0, T_1]$,
\begin{equation*}
\E\left(\int_{B_R}|V_t(x,e_i)|^pdx\right)
=\int_{B_R}\E\left(|V_t(x,e_i)|^p\right)dx
\leqslant C(R,T_1).
\end{equation*}
Hence for every fixed $t \in [0,T_1]$, we can find a $\p$-null set $\Lambda_0$ (that may depend on $t$), such that
$\int_{B_R}|V_t(x,e_i,\omega)|^p dx <\infty$ for every
$R>0$, $1\le i \le d$ when $\omega \in \Lambda_0^c$.
As the same way, we can prove the similar integrable property for $F_t(x,\omega)$.
Therefore $F_t(x,\omega), V_t(x, e_i,\omega) \in L^p_{\loc}(\R^n)$ for $\omega \in
\big(\Lambda_0 \cup \tilde{\Lambda}\big)^c$. In particular,
$\Lambda:=\Lambda_0 \cup \tilde{\Lambda}$ is a
$\p$-null set. We proved that for every $t \in [0,T_1]$,
$F_t(\cdot, \omega) \in W^{1,p}_{\loc}(\R^d;\R^d)$, $\p$-a.s..

\section{The differentiation formula }
\label{bel-formula}
Suppose that Assumption \ref{assumption1} holds, let
$(F_t(x),V_t(x,v))$ be the unique strong solution of (\ref{sde1})
with initial point $(x,v) \in \R^{2d}$.
For $f \in C_b(\R^d)$ we define $P_tf(x):=\E \left(f(F_t(x))\right)$ and
let $Y:\R^d\rightarrow L(\R^d,\R^m)$  be the right inverse of map $X:\R^d\rightarrow L(\R^m,\R^d)$, where
\begin{equation}\label{J3}
X(x)(\xi):=\sum_{k=1}^ m \xi_k X_k(x)  \qquad \text{for }\ \xi=(\xi_1,\xi_2, \dots, \xi_m) \in \R^m.
\end{equation}

\begin{theorem}\label{th3}
Suppose that Assumption \ref{assumption1} holds.
There is a positive constant
$T_2$, such that for every $v \in \R^d$, $f \in C_b(\R^d)$, $t \in (0,T_2]$,
\begin{equation}\label{th3-1}
D_x (P_t f) (v)
=\frac{1}{t}\E \left(f\big(F_t(x)\big)\int_{0}^{t}
\Big\langle Y(F_s(x))(V_s(x,v)), dW_s\Big\rangle_{\R^m} \right).
\end{equation}
\end{theorem}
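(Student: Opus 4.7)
The plan is to derive the formula first for the smooth approximating SDE \eqref{approximated-sde} by the classical Bismut--Elworthy--Li argument, and then to pass to the limit $\ee \to 0$ using Theorem \ref{th1}. The horizon $T_2$ is chosen small enough to accommodate the moment bounds below. Fix $\ee \in (0,\ee_0)$. By Lemma \ref{lem7} the ellipticity bound \eqref{c1} holds for $X^\ee$ with constants independent of $\ee$, so the right inverse $Y^\ee(x) := X^\ee(x)^*(X^\ee(x) X^\ee(x)^*)^{-1}$ is smooth and $|Y^\ee(x)| \le C(1+|x|^\beta)$ for some $\beta$ depending only on $p_1,p_2$. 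Writing $P_t^\ee f(x) := \E f(F_t^\ee(x))$, since the coefficients $X_k^\ee$ are in $C_b^\infty$, the process $s \mapsto P_{t-s}^\ee f(F_s^\ee(x))$ is a martingale and It\^o's formula yields
\begin{equation*}
f(F_t^\ee(x)) = P_t^\ee f(x) + \int_0^t \<DP_{t-s}^\ee f(F_s^\ee(x)), X^\ee(F_s^\ee(x))\,dW_s\>.
\end{equation*}
Taking the It\^o inner product with the mean-zero martingale $M_t^\ee := \tfrac{1}{t}\int_0^t \<Y^\ee(F_s^\ee(x))(V_s^\ee(x,v)), dW_s\>$, using the right-inverse identity $X^\ee Y^\ee = \mathrm{Id}_{\R^d}$, and the chain rule identity $D_x(P_t^\ee f)(v) = \E(DP_{t-s}^\ee f(F_s^\ee)(V_s^\ee(x,v)))$, one obtains
\begin{equation*}
D_x(P_t^\ee f)(v) = \frac{1}{t}\E\left(f(F_t^\ee(x)) \int_0^t \<Y^\ee(F_s^\ee(x))(V_s^\ee(x,v)), dW_s\>\right).
\end{equation*}

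To pass to the limit, the polynomial bound on $Y^\ee$, the uniform polynomial moments of $F_s^\ee$ (Example \ref{ex4-1} applied with the uniform constants in Lemma \ref{lem7}), and the short-time moments of $V_s^\ee$ from Lemma \ref{lem9} combine via H\"older's inequality to give
\begin{equation*}
\sup_{\ee \in (0,\ee_0)}\sup_{x \in K,\,|v|\le 1} \E\int_0^{T_2} \big|Y^\ee(F_s^\ee(x))(V_s^\ee(x,v))\big|^{2+\delta}\,ds < \infty
\end{equation*}
for each compact $K \subset \R^d$ and some $\delta>0$, provided $T_2$ is smaller than a suitable $T_0(q)$ from Lemma \ref{lem9}. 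Theorem \ref{th1}, Lemma \ref{lem8} and Lemma \ref{lem3+lem7} give convergence of the integrand to $Y(F_s(x))(V_s(x,v))$ in probability, and Lemma \ref{lem4} then upgrades this to $L^2$-convergence of the It\^o integrals. Boundedness of $f$ together with $F_t^\ee(x) \to F_t(x)$ in probability gives $f(F_t^\ee(x)) \to f(F_t(x))$ in every $L^q$, so the right hand side of the approximate formula converges to $G(x,v) := \tfrac{1}{t}\E(f(F_t(x))\int_0^t \<Y(F_s(x))(V_s(x,v)), dW_s\>)$. Writing $P_t^\ee f(x+hv) - P_t^\ee f(x) = h\int_0^1 D_x(P_t^\ee f)(x+rhv)(v)\,dr$ and dominating the integrand uniformly in $r \in [0,1]$ and $\ee$ by the same estimates, dominated convergence gives $P_tf(x+hv) - P_tf(x) = h \int_0^1 G(x+rhv, v)\,dr$. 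Dividing by $h$ and letting $h \to 0$ shows that $D_x(P_tf)(v)$ exists and equals $G(x,v)$, which is the claim.

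The main obstacle lies in the uniform $L^{2+\delta}$ estimate above: the polynomial growth $|Y^\ee(x)| \le C(1+|x|^{\beta})$ coming from the degenerate ellipticity \eqref{c1} must be absorbed against the $V_s^\ee$-moments, but Lemma \ref{lem9} only controls $\E|V_s^\ee|^q$ on the short interval $[0, T_0(q)]$ with $T_0(q) = \kappa(q)/(d+2)$ and $\kappa(q)$ possibly decreasing in $q$. One has to select $q$ large enough so that Cauchy--Schwarz on $|F_s^\ee|^{\beta(2+\delta)} \cdot |V_s^\ee|^{2+\delta}$ closes, and then take $T_2 \le T_0(q)$, which is what pins down $T_2$. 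A secondary point is that $X^\ee (X^\ee)^*$ is only pointwise (not uniformly) elliptic, so convergence of $Y^\ee(F_s^\ee(\cdot))$ to $Y(F_s(\cdot))$ in $L^p$ along trajectories needs a local truncation argument combined with the local $L^{p_3}$ convergence $X_k^\ee \to X_k$ from Lemma \ref{lem8}, used through Lemma \ref{lem3+lem7}.
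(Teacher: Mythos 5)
Your proposal follows essentially the same route as the paper's proof: first the Bismut--Elworthy--Li formula for the smooth approximating system (\ref{approximated-sde}) (the paper simply quotes \cite{Li-thesis} and \cite{Elworthy-Li-93} for this step rather than re-deriving it), then a passage to the limit using the polynomial bound on $(A^\ee)^{-1}$ from Lemma \ref{lem7}, the short-time moment bound of Lemma \ref{lem9} together with Example \ref{ex4-1}, the local $L^{p}$ convergence of Lemma \ref{lem8} fed through the Krylov-type estimate of Lemma \ref{lem3+lem7}, and the convergence of Theorem \ref{th1}; the choice of $T_2$ through the moment horizon of Lemma \ref{lem9} is the same. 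Two genuine but minor differences: the paper first proves the formula for $f\in C_b^1(\R^d)$ (using $L^8$ estimates that exploit the Lipschitz property of $f$) and only then approximates a general $f\in C_b(\R^d)$ by $C_b^1$ functions, whereas you treat $f\in C_b$ directly via the continuous mapping theorem, and you make the identification of the limit with $D_x(P_tf)(v)$ explicit through the fundamental-theorem-of-calculus argument along segments -- this is legitimate because your estimates are uniform over compact sets of initial data, so the convergence of $D_x(P_t^\ee f)(v)$ is locally uniform in $x$ and the limit $G(\cdot,v)$ is continuous. One step you should repair: Lemma \ref{lem4} is not the right tool for the convergence of the stochastic integrals here, since its hypothesis is uniform-in-time convergence in probability of the integrands, while your truncation/Krylov argument (like the paper's treatment of $b^\ee_{i,j}$, cf. (\ref{lem10-1})) naturally yields $\lim_{\ee\to 0}\E\int_0^{T_2}\big|Y^\ee(F^\ee_s(x))(V^\ee_s(x,v))-Y(F_s(x))(V_s(x,v))\big|^{q}\,ds=0$ for a suitable $q\ge 2$, i.e.\ convergence in $L^{q}(dt\times d\p)$ rather than uniformly in $t$ in probability. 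But that is all you need: as in the paper, the It\^o isometry or the Burkholder--Davis--Gundy inequality converts this, together with the uniform $L^{2+\delta}$ bound you already assembled, directly into $L^2$ convergence of the It\^o integrals, so the defect is one of citation rather than of substance.
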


\begin{proof}
We first assume that $f\in C_b^1(\R^d)$. Since the coefficients of SDE (\ref{approximated-sde})
are smooth , uniformly elliptic, and with
bounded derivatives, by the classical differential
formula in \cite{Li-thesis} and \cite{Elworthy-Li-93},  we have for every $t>0$,
\begin{equation}\label{th3-2}
D_x \E \left(f(F_t^{\varepsilon}(x))\right)(v)
=\frac{1}{t}\E \left(f(F_t^{\varepsilon}(x))\int_{0}^{t}\langle
 Y^{\varepsilon}(F_s^{\varepsilon}(x))
(V_s^{\varepsilon}(x,v)), dW_s\rangle_{\R^m} \right),
\end{equation}
where $(F_t^\ee(x), V_t^\ee(x,v))$ is the strong solution to
(\ref{approximated-sde}) with initial point
$(x,v)$ $\in \R^{2d}$,
$Y^{\varepsilon}:\R^d\rightarrow L(\R^d,\R^m)$ is the right inverse of map $X^{\varepsilon}:\R^d\rightarrow L(\R^m,\R^d)$.

Since $f\in C^1_b(\R^d)$, by Theorem \ref{th1}, Lemma \ref{lem9} and H\"older inequality, there is a constant
$T_2>0$, such that for any bounded set $K$ in $\R^d$,
\begin{equation}\label{th3-3}
\lim_{\varepsilon\rightarrow 0}\sup_{x \in K}\sup_{t \in [0,T_2]}
\E\left(|f(F_t^{\varepsilon}(x))-f(F_t(x))|^8\right)=0
\end{equation}
\begin{equation}\label{th3-4}
\lim_{\varepsilon\rightarrow 0}\sup_{x \in K}\sup_{t \in [0,T_2]}
\E\left(|V_s^{\varepsilon}(x,v)-V_s(x,v)|^8\right)=0
\end{equation}

Let $A^\ee:=(X^\ee)^{\ast}X^\ee$, where $\ast$ denotes taking the transpose. Then we have
\begin{equation*}
Y^\varepsilon=(X^\varepsilon)^{\ast}(A^\ee)^{-1}.
\end{equation*}
In particular, if we write $X_k^\ee=(X^\ee_{k1},\dots,X^\ee_{kd})$,
$A^\ee=(a_{i,j}^\ee)_{i,j=1}^n$, then
$a_{i,j}^\ee=\sum_{k=1}^m X^\ee_{ki}X^\ee_{kj}$,
and for every
$\xi=(\xi_1,\dots,\xi_d) \in \R^d$,
$Y^{\varepsilon}(x)(\xi)=
(\zeta_1^{\varepsilon}(x),\zeta_2^{\varepsilon}(x)$ $...,\zeta_m^{\varepsilon}(x))$,
where $\zeta_k^{\varepsilon}(x)=\sum_{i,j=1}^d X_{ki}^{\varepsilon}(x)
b_{i,j}^{\varepsilon}(x)\xi_j$, and $(b^\ee_{i,j})=(A^\ee)^{-1}$.

By Lemma \ref{lem7},
\begin{equation*}
\sup_{\ee \in (0,\ee_0)}\big|\big(A^\ee(x)\big)^{-1}\big|\le C(1+|x|^{q})
\end{equation*}
for some $q>0$.
Combining this with (\ref{lem10-1}) and Theorem \ref{th1}, it is easy to show for every compact
set $K \subseteq \R^d$,
\begin{equation*}
\lim_{\varepsilon\rightarrow 0}\sup_{x \in K}\E\left(\int_0^{T_2}|b_{i,j}^{\varepsilon}
(F_t^{\varepsilon}(x))-b_{i,j}(F_t(x))|^8 dt\right)=0.
\end{equation*}
This together with the convergence (\ref{th3-4})
leads to
\begin{equation*}
\lim_{\varepsilon\rightarrow 0}\sup_{x \in K}\E\left(\int_0^{T_2}
|Y^{\varepsilon}(F_t^{\varepsilon}(x))(V_t^{\varepsilon}(x,v))
-Y(F_t(x))(V_t(x,v))|^4 dt\right)=0.
\end{equation*}
Then by (\ref{th3-3}) and BDG inequality, we see that for every
$t \in [0,T_2]$ and compact set $K \subseteq \R^d$,
\begin{equation*}
\begin{split}
&\lim_{\varepsilon\rightarrow 0}\sup_{x \in K}\Big|\E \left(f(F_t^{\varepsilon}(x))\int_{0}^{t}
\langle Y^{\varepsilon}(F_s^{\varepsilon}(x))(V_s^{\varepsilon}(x,v)), dW_s\rangle_{\R^m} \right)\\
&\qquad  \quad -\E \left(f(F_t(x))\int_{0}^{t}\langle Y(F_s(x))(V_s(x,v)), dW_s\rangle_{\R^m} \right)\Big|=0.
\end{split}
\end{equation*}
which implies the differentiation formula (\ref{th3-1}) holds for each
$f \in C_b^1(\R^d)$.

For $f\in C_b(\R^d)$, there is a sequence of functions $\{f_n\}_{n=1}^{\infty}\subseteq
C_b^1(\R^d)$, such that $\sup_n ||f_n||_{\infty}\le ||f||_{\infty}$,
and for every $R>0$,
\begin{equation*}
\lim_{n \to \infty}\sup_{\{|x|\le R\}}|f_n(x)-f(x)|=0.
\end{equation*}
Therefore for every $R>0$, $t \in (0,T_2]$,
\begin{equation*}
\begin{split}
&\E\left(|f_n(F_t(x))-f(F_t(x))|^2\right)\\
&\le
\sup_{\{|x|\le R\}}|f_n(x)-f(x)|^2+C||f||_{\infty}^2\p(|F_t(x)|>R)\\
&\le \sup_{\{|x|\le R\}}|f_n(x)-f(x)|^2+\frac{C||f||_{\infty}^2\E(|F_t(x)|)}{R},
\end{split}
\end{equation*}
and  by (\ref{e2}), first let $n \to 0$ and then $R \to \infty$, we obtain that for every
compact set $K \subseteq \R^d$,
\begin{equation*}
\lim_{n \to \infty}\sup_{x \in K}\E\left(|f_n(F_t(x))-f(F_t(x))|^2\right)=0,
\end{equation*}
which proves that (\ref{th3-1}) holds by standard approximation argument.
\end{proof}

\bigskip


\begin{thebibliography}{99}


\bibitem{Blagovescenskii-Freidlin61}
J. N. Blagove{\v{s}}{\v{c}}enski{\u\i},  and M. I. Fre{\u\i}dlin.:
Some properties of diffusion processes depending on a
parameter,
\emph{Dokl. Akad. Nauk SSSR} \textbf{138} (1961), 508--511.



\bibitem{Bouleau-Hirsch}
N. Bouleau and F. Hirsch.: Dirichlet forms and analysis on {W}iener space,
\emph{de Gruyter Studies in Mathematics}   \textbf{138},
Walter de Gruyter \& Co, Berlin, (1991).

\bibitem{Chen-Li-Arxiv}
X. Chen and X.-M. Li.:  An approximation scheme for SDEs with non-smooth coefficients,
arXiv:1008.0899.

\bibitem{Chen}
 X. Chen.:  University of Warwick Ph.D. thesis (2011).

 \bibitem{Cox-Hutzenthaler-Jentzen}
 S. Cox, M. Hutzenthaler and A. Jentzen.:
 Local Lipschitz continuity in the initial value
 and strong completeness for nonlinear stochastic differential equations,
 arXiv:1309.5595.

\bibitem{Davie07}
A. M. Davie.: Uniqueness of solutions of stochastic differential equations,
\emph{Int. Math. Res. Not.} \textbf{24} (2007).

\bibitem{Elworthy} K. D. Elworthy.: Stochastic differential equation on manifolds,
\emph{Lecture notes series} \textbf{70}, Cambridge University Press, Cambridge, (1982).

\bibitem{Elworthy-Li-93}
K. D. Elworthy and X.-M. Li.: Formulae for the derivatives of heat semigroups,
\emph{J. Funct. Anal.} \textbf{125}(1) (1994), 252--286.

\bibitem{Evans}
L. C. Evans.: Partial differential equations, \emph{Graduate Studies in Mathematics}
\textbf{19}, American Mathematical Society, Providence, RI, (1998).

\bibitem{Fang-Imkeller-Zhang}
S. Z. Fang, P. Imkeller and T. S. Zhang.: Global flows for stochastic differential equations without
global Lipschitz conditions, \emph{Ann. Probab.} \textbf{35}(1) (2007),
180--205.

\bibitem{Fang-Luo}
S. Z. Fang and D. J. Luo.: Flow of homeomorphisms and stochastic transport equations,
\emph{Stoch. Anal. and Appl.} \textbf{25} (2007), 1179--1208.

\bibitem{Fang-Zhang}
S. Z. Fang and T. S. Zhang.: A study of a class of stochastic differential equations with
non-Lipschitzian coefficients, \emph{Probab. Theory Related Fields.}
\textbf{132}(3) (2005), 356--390.

\bibitem{Fedrizzi-Flandoli}
E. Fedrizzi and F. Flandoli.: Pathwise uniqueness and continuous dependence of {SDE}s with
non-regular drift, \emph{Stochastics.} \textbf{83}(3) (2003), 241--257.

\bibitem{Flandoli-Gubinelli-Priola}
F. Flandoli, M. Gubinelli,and E. Priola.:
Flow of diffeomorphisms for {SDE}s with unbounded {H}\"older
continuous drift, \emph{Bull. Sci. Math.} \textbf{134}(4) (2010),
405--422.

\bibitem{Gyongy-Martinez}
I. Gy\"ongy and T. Martinez.:
On stochastic differential equations with locally unbounded drift,
\emph{Czechoslovak Math. J.} \textbf{51}(4) (2001), 763--783.


\bibitem{Hairer-Hutzenthaler-Jentzen}
M. Hairer, M. Hutzenthaler, and A. Jentzen.:
Loss of regularity for Kolmogorov equations,
to appear in Ann. Probab, arXiv:1209.6035.

\bibitem{Ikeda-Watanabe}
N. Ikeda and S. Watanabe.: Stochastic differential equations and diffusion processes,
\emph{North-Holland Mathematical Library} \textbf{24} North-Holland Publishing Co.,
Amsterdam, (1989).

\bibitem{Kaneko-Nakao}
H. Kaneko and S. Nakao.: A note on approximation for stochastic differential equations,
S\'eminaire de {P}robabilit\'es, {XXII}
\emph{Lecture Notes in Math.} \textbf{1321} Springer, Berlin, (1988), 155--162.

\bibitem{Krylov-86}
N. V. Krylov.: Estimates of the maximum of the solution of a parabolic equation and
estimates of the distribution of a semimartingale,
\emph{Mat. Sb. (N.S.)} \textbf{130}(2) (1986), 207--221.

\bibitem{Krylov-book}
N. V. Krylov.: Lectures on elliptic and parabolic equations in sobolev spaces,
\emph{Graduate Studies in Mathematics} \textbf{96} American Mathematical Society, Providence, RI, (2008).


\bibitem{Krylov-Rockner}
N. V. Krylov and M. R\"ockner.: Strong solutions of stochastic equations with singular time
dependent drift, \emph{Probab. Theory Related Fields.} \textbf{131}(2) (2005),
154--196.

\bibitem{Kunita-book}
H. Kunita.: Stochastic flows and stochastic differential equations, \emph{Cambridge Studies in Advanced Mathematics}
\textbf{24} Cambridge University Press, Cambridge, (1990).

\bibitem{Li-flow}
X.-M.  Li.: Strong {$p$}-completeness of stochastic differential equations
and the existence of smooth flows on noncompact manifolds,
\emph{Probab. Theory Related Fields.} \textbf{100}(4) (1994), 485--511.

\bibitem{Li-moments}
X.-M. Li.: Stochastic differential equations on non-compact manifolds: moment stability and its topological consequences.
\emph{Probab. Theory Related Fields.}  \textbf{100}(4), 417-428. (1994).

\bibitem{Li-thesis}
X.-M. Li.: Stochastic Flows on Non-compact Manifolds, The University of Warwick Ph. D. Thesis, 1993.

\bibitem{Li-Scheutzow}
X.-M. Li and M. Scheutzow.: Lack of strong completeness for stochastic flows,
\emph{Ann. Probab.} \textbf{39}(4) (2001), 1407--1421.

\bibitem{MNP}
S. E. A. Mohammed, T. Nilssen and F. Proske.:
Sobolev differentiable
stochastic flows of SDEs with measurable drift and applications,  arXiv:1204.3867.

\bibitem {Portenko}
N. I. Portenko.:  Generalized diffusion processes.
 Proceedings of the Third Japan-USSR Symposium on Probability Theory
 (Tashkent, 1975),
 pp. 500--523. \emph{Lecture Notes in Math.} \textbf{550}, Springer, Berlin,  (1976).

\bibitem{Revuz-Yor}
D. Revuz, M. Yor.: Continuous Martingales and Brownian Motion, second ed.,
Springer-Verlag, (1994).

\bibitem{Rebolledo} R. Rebolledo.: La m\'ethode des martingales appliqu\'ee \'a l'\'etude de la
convergence en loi de processus, \emph{Bull. Soc. Math. France M\'em}
\textbf{62} (1979).

\bibitem{Skorohod}
A. V. Skorohod.: Studies in the Theory of Random Processes,
Izdat. Kiev. Univ., Kiev,
(1961), English transl.: Addison Wesley Publ. Co., Reading, Mass., (1965).

\bibitem{Taniguchi}
S. Taniguchi.: Stochastic flows of diffeomorphisms on an open set of $\R^n$,
\emph{Stochastics} \textbf{28}(4) (1989), 301--315.

\bibitem{Veretennikov80}
A. Veretennikov.: Strong solutions and explicit formulas for solutions of
stochastic integral equations, \emph{Mat. Sb. (N.S.)}
\textbf{111}(3) (1980), 434--452.

\bibitem{Zhang-05}
X. C. Zhang.: Strong solutions of SDEs with singular drift and {S}obolev
diffusion coefficients, \emph{Stochastic Process. Appl.}
\textbf{115}(11) (2005), 1805--1818.

\bibitem{Zhang-10}
X. C. Zhang.: Stochastic flows and Bismut formulas for stochastic Hamiltonian systems,
\emph{Stochastic Process. Appl.}
\textbf{120}(10) (2010), 1929--1949.

\bibitem{Zhang-11}
X. C. Zhang.: Stochastic homeomorphism flows of SDEs with singular drifts and Sobolev diffusion coefficients,
\emph{Electron. J. Probab.} \textbf{16}(38) (2011), 1096--1116.

\bibitem{Zheng85} W. A. Zheng.: Tightness results for laws of diffusion processes application to stochastic mechanics,
\emph{Ann. Inst. H. Poincar\'e Probab. Statist.} \textbf{21}(2) (1985), 103--124.

\bibitem{Zvonkin74}
A. K. Zvonkin.: A transformation of the phase space of a diffusion process
that will remove the drift, \emph{Mat. Sb. (N.S.)} \textbf{93} (1974), 129--149.

\bibitem{Krylov-Zvonkin}
A. K. Zvonkin and N. V. Krylov.: Strong solutions of stochastic differential equations,
\emph{Sel. Math. Sov} \textbf{1} (1981), 19--61.

\end{thebibliography}

 \end{document}